\documentclass[a4paper, 11pt]{article}
\usepackage{fancyhdr}
\usepackage{graphicx}
\usepackage{geometry}

\usepackage[T1]{fontenc}
\usepackage[latin1]{inputenc}

\usepackage{color}

%\renewcommand{\thesection}{\arabic{section}.}
%\renewcommand{\thesubsection}{\thesection\arabic{subsection}.}
%\renewcommand{\thesubsubsection}{\thesubsection\arabic{subsubsection}.}

%\def\figurename{Figure} % Replace the colon that normally appears after
%the Figure number by a period.
%\makeatletter
%\renewcommand{\fnum@figure}[1]{\figurename~\thefigure.}
%\makeatother

%\def\tablename{Table} % Replace the colon that normally appears after
%the Figure number by a period.
%\makeatletter
%\renewcommand{\fnum@table}[1]{\tablename~\thetable.}
%\makeatother

\usepackage{amsmath}
\usepackage{amssymb}
\usepackage{amsfonts}
\usepackage{amsthm,amscd}

\newtheorem{theorem}{Theorem}[section]
\newtheorem{lemma}[theorem]{Lemma}

\newtheorem{proposition}[theorem]{Proposition}
\theoremstyle{definition}

\newtheorem{remark}{Remark}[section]

\numberwithin{equation}{section}
\def\N{\mbox{I\hspace{-.15em}N}}
\def\R{\mbox{I\hspace{-.15em}R}}

%----------- [New Nova Template 7x10 Regular, 8.5 x 5.5] ---------------------
%\setlength{\topmargin}{-0.35in}
%\setlength{\textheight}{9.5in}   % 11.0 - 1.125 - 0.875
%\setlength{\textwidth}{6.5in}    %  8.5 - 1.375 - 1.125
%\setlength{\oddsidemargin}{0.25in}
%\setlength{\evensidemargin}{0.25in}
%\setlength{\headheight}{26pt}
%\setlength{\headsep}{8pt}

%-----------------------------------------------------------------------------

%%%%%%%%%%%%%%%%%%%%%%%%%%%%%%%%%%%%%%%%%%%%%%%%%%%%%%%%%%%%%%%%%%%%%%%%%%
%%%                     Title
%%%%%%%%%%%%%%%%%%%%%%%%%%%%%%%%%%%%%%%%%%%%%%%%%%%%%%%%%%%%%%%%%%%%%%%%%%

%%%%%%%%%%%%%%%%%%%%%%%%%%%%%%%%%%%%%%%%%%%%%%%%%%%%%%%%%%%%%%%%%%%%%%%%%%
%%%                     Adress
%%%%%%%%%%%%%%%%%%%%%%%%%%%%%%%%%%%%%%%%%%%%%%%%%%%%%%%%%%%%%%%%%%%%%%%%%%
\begin{document}

\date{}
\title{A class of stochastic differential equations with super-linear growth and non-Lipschitz coefficients\thanks{Partially
supported by PHC Volubilis MA/10/224, PHC Tassili 13MDU887 and  MODTERCOM project  APEX Programme r\'egion Provence-Alpe-C\^ote d'Azur.}}
\author{\small{Khaled BAHLALI}\thanks{\textbf{bahlali@univ-tln.fr}, Universit\'{e} de Toulon, IMATH, EA 2134, 83957
La Garde, France, \& CNRS, I2M, Universit\'{e} Aix Marseille (2013/2014), Marseille, France.}\, ,
\small{Antoine HAKASSOU}\thanks{\textbf{antoinehakassou@gmail.com}, Universit\'{e} Priv\'{e}e de Marrakech,
Ecole d'Ing\'{e}nierie et d'Innovation, Km 13, route d'Amezmiz, BP 42312, Marrakech, Maroc.}\, \
 and \small{Youssef OUKNINE}\thanks{\textbf{ouknine@uca.ma}, Universit\'{e} Cadi Ayyad de Marrakech \& Acad\'{e}mie Hassan II des
Sciences et Techniques de Rabat, Maroc.}
}

\maketitle

\begin{center}
\textbf{Abstract}
\end{center}

The purpose of this paper is to study some properties of solutions to one dimensional as well as multidimensional 
stochastic differential equations (SDEs in short) with super-linear growth conditions on the coefficients.
Taking inspiration from \cite{BEHP, KBahlali, Bahlali}, we introduce a new {\it{local condition}}  which ensures the pathwise 
uniqueness, as well as the non-contact property. We moreover show that the solution produces a stochastic flow of continuous 
maps and satisfies a large deviations principle of Freidlin-Wentzell type.
Our conditions on the coefficients go beyond the existing ones in the literature.
For instance, the coefficients are not assumed uniformly continuous and therefore can not satisfy
the classical Osgood condition. The drift coefficient could not be
locally monotone and the diffusion is neither locally Lipschitz nor uniformly elliptic.
Our conditions on the coefficients are, in some sense, near the best possible. 
Our results are sharp and mainly based on Gronwall lemma and the localization of the time parameter in
concatenated intervals.\\

{\bf{Keywords}}: Stochastic flows, Large deviations,
Non-Lipschitz coefficients, Pathwise uniqueness, Non-confluence, Euler scheme, Gronwall lemma.\\

{\bf AMS Subject Classification }: 60H10, 60J60, 34A12, 34A40.

\section{Introduction and motivations}

This work was initially motivated by the study of stochastic flows of homeomorphisms and large deviations of the following
simple example of one dimensional stochastic differential equations with super-linear growth coefficients:
\begin{equation}\label{xlogxintro}
X_{t}=x+\int_{0}^{t} X_{s} \log{|X_{s}|} ds +\int_{0}^{t} X_{s}\sqrt{|\log{|X_{s}|}|} dW_{s}
\end{equation}
where $x\in\mathbb{R}$ and $(W_{t})_{t \geq 0}$ is an $\mathbb{R}$-valued standard Brownian motion.

Our motivation for SDE (\ref{xlogxintro}) comes from the fact that the stochastic flows of homeomorphisms
defined by these type of SDEs may be related to the construction of  Canonical diffusions above the diffeomorphism group of the circle and also the construction of a metric in the H\"older-Sobolev space
$\mathcal{H}^{\frac{3}{2}}$, see Malliavin \cite{malliavin}.
Note also that the logarithmic nonlinearities $x\sqrt{\log{|x|}}$ and $x\log{|x|}$ are interesting in their own
since they are neither locally monotone nor uniformly continuous. They are, in some sense, near the best possible. Indeed,

1) An exponential transformation formally shows that the SDE with diffusion coefficient   $x\sqrt{\log{|x|}}$ is equivalent to 
the SDE with diffusion coefficient $|x|^{\frac12}$ which is the best possible for pathwise uniqueness according to Yamada \& Watanabe \cite{yawa}.

2) The growth conditions $x\log{|x|}$ on the drift coefficient constitute a critical case
in the sense that, for any $\varepsilon >0$, the solutions of the ordinary
differential equation $ X_t = x + \int_0^t X_s^{1+\varepsilon}ds$ \ explode
at a finite time.

Note finally that the nonlinearity $u\log{|u|}$ also appear  in some PDEs arising in physics, see e.g.
 \cite{Bialynicki, Mycielski, Cazenave, PZ}.

We now begin with our subject. Let $\sigma:\mathbb{R}^{d}\rightarrow\mathbb{R}^{d}\times\mathbb{R}^{m}$ and 
$b:\mathbb{R}^{d}\rightarrow\mathbb{R}^{d}$ be
respectively matrix-valued and vector-valued continuous functions and consider the following forward It\^o SDE:
\begin{equation}\label{1}
X_{t}=x+\int_{0}^{t} b(X_{s})ds +\int_{0}^{t} \sigma(X_{s}) dW_{s}
\end{equation}
where $x\in\mathbb{R}^{d}$ is fixed and $(W_{t})_{t\geq0}$ is an $\mathbb{R}^{m}$-valued standard Brownian motion defined
on a complete filtered probability space $(\Omega, \mathcal{F}, (\mathcal{F}_{t}), \mathbb{P})$ with $(\mathcal{F}_{t})$ 
a right continuous increasing family of sub-$\sigma$-fields of $\mathcal{F}$ each containing $\mathbb{P}$-null sets.

According to Skorohod result \cite{Skorohod}, SDE \eqref{1} admits a weak (in law) solution up to an explosion time
 (see also Ikeda \& Watanabe \cite{Ikeda}, Karatzas \& Shreve \cite{Karatzas},
Revuz \& Yor \cite{Revuzyor}, Stroock \& Varadhan \cite{varadhan}). Thanks to the celebrated result of Yamada
 \& Watanabe \cite{yawa}, we know that if a weak solution is pathwise unique, it is then  a strong solution, that is adapted to the Brownian filtration.
Having a unique strong solution, it becomes possible to study some other properties such as the dependence to the initial data
and the large deviations of Freidlin-Wentzell's type. So the study of  pathwise uniqueness is greatly interesting.

Under Lipschitz conditions, it is classical that the pathwise
uniqueness holds, see for instance It\^o \cite{Ito}, Yamada \& Watanabe \cite{yawa}, and the non-contact
property (also known as non-confluence property) of the solutions holds, see Emery \cite{Emery},
Kunita \cite{Kunita}, Meyer \cite{Meyer}, Yamada \& Ogura \cite{yaogura}. Moreover, the solution depends
bicontinuously on $(t,x)$, see Kunita \cite{Kunita}, and satisfies
a large deviations principle of Freidlin-Wentzell type, see Freidlin \& Wentzell \cite{FW}, Azencott \cite{Azencott},
Dembo \& Zeitouni \cite{Dembozeitouni}, Deuschel \& Stroock \cite{Deuschel-Stroock}.

In the last 15 years, the study of SDEs with few regularities on the coefficients
has a renewed interest, see for instance  \cite{Athreya, bahlali1999,  Bass-Perk, Fang, Zhang, KR, liang, liang2, 
RenZhang, RenZhang2, swart1, swart2, XichengZhang}.

The purpose of this paper is to study multidimensional SDEs with logarithmic nonlinearity growth and our guiding example is the
one-dimensional SDE (\ref{xlogxintro}).

  Let \  $|\cdot|$ denote the Euclidean distance in $\mathbb{R}^{d}$,
$||\sigma||^{2}=\sum_{i=1}^{d}\sum_{j=1}^{m}\sigma_{ij}^{2}$, and for any integer $N>e$ \ we put
$\mathbb{B}(N):=\{x\in\mathbb{R}^{d}; |x|\leq N\}$. 
We now introduce our main assumption which is inspired from the papers \cite{BEHP, KBahlali, Bahlali} and which cover our 
motivating SDE (\ref{xlogxintro}).

\begin{equation}\tag{H1} \label{H1}
\left\{
\begin{array}{cc}
\mbox{there exist} \ C>0 \ \mbox{and} \ \mu>0 \  \mbox{such that for every} \ x,y\in \mathbb{B}(N), \\ \\
 ||\sigma(x)-\sigma(y)||\leq C\sqrt{\log{N}} |x-y| + C(\log{N})/N^{\mu}\\ \\
  |b(x)-b(y)| \leq C\log{N} |x-y| + C(\log{N})/N^{\mu}
\end{array}
 \right.
\end{equation}

 We first establish that assumption (H1) ensures the existence of a pathwise unique solution for SDE \eqref{1}.
Then we prove that, under this assumption, the  solution  has the non-contact property. Moreover, this solution depends continuously 
in its two variables $(t,x)$ and satisfies a large deviations principle of
Freidlin-Wentzell type.
In some sense, assumption (H1) is near the best possible. Moreover, our methods of proving the pathwise
uniqueness, the non-contact property, the bicontinuity and the large deviations are simples. Also, they work in any finite dimension 
and improve those of \cite{Fang, Zhang, liang, liang2}.

The rest of the paper is organized as follows. In section 2, we prove the pathwise uniqueness,
a one-dimensional comparison theorem, the non-contact property and the bicontinuity of the solution of SDE \eqref{1}.
In section 3, dealing with Euler scheme, we establish that the solution satisfies a large deviations
principle of Freidlin-Wentzell type.
Finally, as a by-product of our results,  we study in section 4 our guide-motivating SDE (\ref{xlogxintro}). 
In the end of section 4, we show that our guidance SDE (\ref{xlogxintro}) is not covered by the paper \cite{Fang, Zhang}.  
We also show that our paper cover the papers \cite{Fang, liang, liang2}.

\begin{remark}

Throughout the paper, the universal constants appearing in the inequalities are denoted by C and allowed
to change from place to place.
Moreover, all processes considered in the sequel, if it is not precised, will be assumed to be defined
on the complete filtered probability space
$(\Omega, \mathcal{F}, (\mathcal{F}_{t}), \mathbb{P})$.

\end{remark}

\section{Stochastic flows of continuous maps}

The main purpose of this section is to prove that under hypothesis \eqref{H1}, the SDE \eqref{1} has a unique strong solution
which produces a stochastic flows of continuous maps. In this goal, we shall establish the pathwise uniqueness,
the non-contact property and the bicontinuous dependence of the solution to the initial values.

\subsection{Pathwise uniqueness}

We give as follows the capital result of this section.

\begin{theorem}\label{thm 2.1}

Assume that hypothesis \eqref{H1} holds and let $(X_{t}(\omega))$ and $(Y_{t}(\omega))$ be two solutions (of continuous samples) 
without explosion of the SDE \eqref{1} such that $X_{0}(\omega)=Y_{0}(\omega)=x$ almost surely.
Then, for any $T>0$ we have almost surely $X_{t}(\omega)=Y_{t}(\omega)$ for all $0\leq t\leq T$.

\end{theorem}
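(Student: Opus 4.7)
The plan is to run a Gronwall argument on the squared difference $|X_t-Y_t|^2$, localized by the exit times of both processes from the ball $\mathbb{B}(N)$, and then patch together a sequence of short time intervals on which the Gronwall constant is small enough to beat the error term in (H1).

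First I would set $\tau_N:=\inf\{t\ge 0:\, |X_t|\vee|Y_t|\ge N\}$. Since neither solution explodes, $\tau_N\uparrow\infty$ almost surely. Applying It\^o's formula to $|X_{t\wedge\tau_N}-Y_{t\wedge\tau_N}|^2$ and taking expectations (the stochastic integral is a true martingale thanks to the stopping) yields
\begin{equation*}
\phi_N(t):=\E|X_{t\wedge\tau_N}-Y_{t\wedge\tau_N}|^2
=\E\!\int_0^{t\wedge\tau_N}\!\!\bigl[2(X_s-Y_s)\cdot(b(X_s)-b(Y_s))+\|\sigma(X_s)-\sigma(Y_s)\|^2\bigr]ds.
\end{equation*}
On the stochastic interval $[0,\tau_N]$ both $X_s$ and $Y_s$ lie in $\mathbb{B}(N)$, so (H1) applies. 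Bounding the drift term with the elementary inequality $2uv\le u^2+v^2$ to absorb the additive error $C(\log N)/N^\mu$, and squaring the diffusion bound, I obtain constants of order $\log N$ in front of $|X_s-Y_s|^2$ and a residual term of order $(\log N)^2/N^{2\mu}$. Concretely,
\begin{equation*}
\phi_N(t)\ \le\ A\,T\,\frac{(\log N)^2}{N^{2\mu}}\ +\ B\,\log N\int_0^t\phi_N(s)\,ds,
\end{equation*}
for universal constants $A,B>0$ depending only on $C$.

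Gronwall's lemma then delivers
\begin{equation*}
\phi_N(t)\ \le\ A\,T\,\frac{(\log N)^2}{N^{2\mu}}\,\exp(B\,t\,\log N)\ =\ A\,T\,(\log N)^2\,N^{Bt-2\mu}.
\end{equation*}
Fix $\Delta>0$ with $B\Delta<2\mu$, say $\Delta:=\mu/B$. Then for every $t\in[0,\Delta]$, $\phi_N(t)\to 0$ as $N\to\infty$. Extracting a subsequence along which $X_{t\wedge\tau_{N_k}}-Y_{t\wedge\tau_{N_k}}\to 0$ a.s.\ and using $\tau_N\uparrow\infty$ together with continuity of sample paths, I conclude $X_t=Y_t$ a.s.\ for every $t\in[0,\Delta]$; continuity of paths upgrades this to pathwise identity on the whole interval $[0,\Delta]$.

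To reach an arbitrary $T>0$ I concatenate. Once uniqueness is established on $[0,\Delta]$, the processes $(X_{\Delta+t})_{t\ge 0}$ and $(Y_{\Delta+t})_{t\ge 0}$ are two non-exploding solutions of the same SDE driven by the shifted Brownian motion $W_{\Delta+\cdot}-W_\Delta$ with the common initial value $X_\Delta=Y_\Delta$. Repeating the previous argument on $[\Delta,2\Delta]$, then on $[2\Delta,3\Delta]$, and so on, after at most $\lceil T/\Delta\rceil$ iterations we cover $[0,T]$. The main obstacle, which this concatenation circumvents, is that the effective Lipschitz constant $\log N$ produced by (H1) makes the Gronwall factor $e^{Bt\log N}=N^{Bt}$ explode with $N$; only on a horizon short enough that $Bt<2\mu$ does the decay $(\log N)^2/N^{2\mu}$ of the error term prevail. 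The choice $\Delta=\mu/B$, independent of $T$, makes the iteration finite and closes the proof.
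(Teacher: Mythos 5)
Your proof is correct and follows essentially the same route as the paper's: localize by exit times, apply It\^o and (H1), Gronwall with an $N$-dependent constant, let $N\to\infty$ on a horizon short enough that the exponential factor $N^{Bt}$ is beaten by the decay $N^{-2\mu}$, then concatenate intervals of fixed length. The only cosmetic difference is that you work with the pointwise expectation $\E|X_{t\wedge\tau_N}-Y_{t\wedge\tau_N}|^2$ and pass to indistinguishability by continuity, whereas the paper applies Burkholder to control $\E\sup_{t\le T}$ directly; both are valid and lead to the same conclusion.
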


\begin{proof}{}
Let $(X_{t}(\omega))$ and $(Y_{t}(\omega))$ be two solutions (of continuous samples) of the SDE (\ref{1}) with the same 
initial datum $x\in\mathbb{R}^{d}$.

For  $N\in\mathbb{N}^{\ast}$, we define the stopping time $\zeta_{N}:=\inf\{t>0; |X_{t}|>N \mbox{ or } |Y_{t}|>N \}$.
Since the solutions of SDE \eqref{1} are assumed to be conservative, then $\zeta_{N}$ tends
to $+\infty$ as N tends to $+\infty$.

Using It\^o's formula, we  get
\begin{equation*}
\begin{split}
|X_{t \wedge \zeta_{N}}-Y_{t \wedge \zeta_{N}}|^{2}=2 \int_{0}^{t\wedge \zeta_{N}}
\langle (X_{s \wedge \zeta_{N}}-Y_{s \wedge \zeta_{N}}),
(\sigma (X_{s \wedge \zeta_{N}})-\sigma (Y_{s \wedge \zeta_{N}}))dW_{s} \rangle\\
+ 2\int_{0}^{t\wedge \zeta_{N}}
\langle (X_{s \wedge \zeta_{N}}-Y_{s \wedge \zeta_{N}}), (b(X_{s \wedge \zeta_{N}})-b(Y_{s \wedge \zeta_{N}})) \rangle ds \\
+ \int_{0}^{t\wedge \zeta_{N}} {||\sigma (X_{s \wedge \zeta_{N}})-\sigma (Y_{s \wedge \zeta_{N}})||}^{2}ds.
\end{split}
\end{equation*}

Thanks to the Burkholder inequality, we get for any $T>0$
\begin{equation*}
\begin{split}
\mathbb{E} \sup_{t \leq T}{| X_{t \wedge \zeta_{N}}-Y_{t \wedge \zeta_{N}} |}^{2} \leq
2 \mathbb{E}\int_{0}^{T} |b(X_{s \wedge \zeta_{N}})-b(Y_{s \wedge \zeta_{N}})||X_{s \wedge \zeta_{N}}-Y_{s \wedge \zeta_{N}}| ds \\
+2 C_{1} \mathbb{E} {\left( \int_{0}^{T} {||\sigma (X_{s \wedge \zeta_{N}})-\sigma (Y_{s \wedge \zeta_{N}})||}^{2}
{| X_{s \wedge \zeta_{N}}-Y_{s \wedge \zeta_{N}} |}^{2}ds \right)}^{1/2} \\
+\mathbb{E}\int_{0}^{T} {||\sigma (X_{s \wedge \zeta_{N}})-\sigma (Y_{s \wedge \zeta_{N}})||}^{2}ds.
\end{split}
\end{equation*}

This implies that
\begin{equation*}
\begin{split}
\mathbb{E} \sup_{t \leq T}{| X_{t \wedge \zeta_{N}}-Y_{t \wedge \zeta_{N}} |}^{2} \leq
2 \mathbb{E}\int_{0}^{T} |b(X_{s \wedge \zeta_{N}})-b(Y_{s \wedge \zeta_{N}})||X_{s \wedge \zeta_{N}}-Y_{s \wedge \zeta_{N}}| ds \\
+ (1+2C_{1}^{2})\mathbb{E}\int_{0}^{T} {||\sigma (X_{s \wedge \zeta_{N}})-\sigma (Y_{s \wedge \zeta_{N}})||}^{2}ds \\
+\frac{1}{2}\mathbb{E} \sup_{t \leq T}{| X_{t \wedge \zeta_{N}}-Y_{t \wedge \zeta_{N}} |}^{2}.
\end{split}
\end{equation*}

Then
\begin{equation*}
\begin{split}
\mathbb{E}\sup_{t \leq T}{| X_{t \wedge \zeta_{N}}-Y_{t \wedge \zeta_{N}} |}^{2} \leq
(2+4C_{1}^{2})\mathbb{E}\int_{0}^{T} {||\sigma (X_{s \wedge \zeta_{N}})-\sigma (Y_{s \wedge \zeta_{N}})||}^{2}ds \\
+4\mathbb{E}\int_{0}^{T} |b(X_{s \wedge \zeta_{N}})-b(Y_{s \wedge \zeta_{N}})| |X_{s \wedge \zeta_{N}}-Y_{s \wedge \zeta_{N}}| ds.
\end{split}
\end{equation*}

According to hypothesis \eqref{H1}, it follows that
 \begin{equation*}
\mathbb{E} \sup_{t \leq T}{| X_{t \wedge \zeta_{N}}-Y_{t \wedge \zeta_{N}} |}^{2} \leq  CT\frac{\log{N}}{N^{\mu}}
+ C\log{N}\int_{0}^{T} \mathbb{E} \sup_{u\leq s} {| X_{u \wedge \zeta_{N}}-Y_{u \wedge \zeta_{N}} |}^{2}ds.
\end{equation*}

By the Gronwall lemma, we get
\begin{equation}\label{4}
\mathbb{E} \sup_{t \leq T}{|X_{t \wedge \zeta_{N}}-Y_{t \wedge \zeta_{N}}|}^{2} \leq CT\frac{\log{N}}{N^{\mu-CT}}.
\end{equation}

Since
 \begin{equation*}
  \sup_{t\leq T} |X_{t}-Y_{t}|^{2}1_{\{T\leq \zeta_{N}\}}
  =\sup_{t\leq T} |X_{t\wedge\zeta_{N}}-Y_{t\wedge\zeta_{N}}|^{2}1_{\{T\leq \zeta_{N}\}} \ \ a.s.
 \end{equation*}

Then,
 \begin{equation*}
  \sup_{t\leq T} |X_{t}-Y_{t}|^{2}1_{\{T\leq \zeta_{N}\}} \leq \sup_{t\leq T} |X_{t\wedge\zeta_{N}}-Y_{t\wedge\zeta_{N}}|^{2}\ \ a.s.
 \end{equation*}

Letting $N$ tends to $+\infty$ in the previous inequality and thanks to fact that $\zeta_{N}$ goes to $+\infty$ $a.s$,
it follows that
 \begin{equation*}
  \sup_{t\leq T} |X_{t}-Y_{t}|^{2} \leq
  \liminf_{N\rightarrow +\infty}\sup_{t\leq T} |X_{t\wedge\zeta_{N}}-Y_{t\wedge\zeta_{N}}|^{2}.
 \end{equation*}

Taking the expectation we get
 \begin{equation}
  \mathbb{E}\sup_{t\leq T}|X_{t}-Y_{t}|^{2} \leq
  \mathbb{E}\liminf_{N\rightarrow +\infty} \sup_{t\leq T}|X_{t\wedge\zeta_{N}}-Y_{t\wedge\zeta_{N}}|^{2}.
 \end{equation}

Using  Fatou's lemma and sending $N$ to $+\infty$ in \eqref{4}, it follows that for any $T<\mu/C$
 \begin{equation}
\mathbb{E} \sup_{t \leq T}{| X_{t}-Y_{t} |}^{2} =0.
\end{equation}

Starting again from $\mu/C$ and applying the same arguments as above, we get for any $T\in [\mu/C; 2\mu/C[$
\begin{equation*}
\mathbb{E} \sup_{t \leq T}{| X_{t}-Y_{t} |}^{2} =0.
\end{equation*}

For $k\in\mathbb{N}$, we set $T_{k}:=k\mu/C$. Clearly $T_{k}$ goes to $+\infty$ as k tends $+\infty$.
We start now from $T_{k}$ and then in a same manner as in the first part of the proof, we show that for any $T\in [T_{k}, T_{k+1}[$
\begin{equation*}
\mathbb{E} \sup_{t \leq T}{| X_{t}-Y_{t} |}^{2} =0.
\end{equation*}

Since for every $T\in\mathbb{R}_{+}$, there exists a unique $k_{0}\in\mathbb{N}$ such that $T\in [T_{k_{0}}, T_{k_{0}+1}[$ we get:
\begin{equation*}
\mathbb{E} \sup_{t \leq T}{| X_{t}-Y_{t} |}^{2}
\leq \sum_{k=0}^{k_{0}} \mathbb{E} \sup_{t\in [0,T]\cap [T_{k}, T_{k+1}[}{| X_{t}-Y_{t} |}^{2}=0.
\end{equation*}

Hence, for every $0\leq t\leq T$, we have $X_{t}=Y_{t}$ \ $a.s$. Thanks to the continuity of the samples paths,
the two solutions are indistinguishable.

\end{proof}

\begin{remark}

It should be noted that the conditions \eqref{H1} does not imply the non-explosion of the SDE \eqref{1}.
If the solution explodes at a finite time, Theorem \ref{thm 2.1}  ensures then the pathwise uniqueness up to a life-time.

\end{remark}

As a consequence of the pathwise uniqueness, we shall establish under additional conditions that the obtained unique strong solution
depends continuously to the initial data.

\begin{theorem}\label{thm 3.2}

Assume that the coefficients $\sigma$ and b are bounded and satisfy hypothesis \eqref{H1}.
Let $x_{l}\in\mathbb{R}^{d}$ be a sequence which converges to $x\in\mathbb{R}^{d}$ and consider $X_{t}(x_{l})$ and $X_{t}(x)$
the unique solutions of SDE \eqref{1} starting from $x_{l}$ and $x$ respectively.
Then, for any $T\geq0$, we have
\begin{equation*}
\lim_{l \rightarrow +\infty} \mathbb{E}\sup_{t \leq T} {|X_{t}(x_{l})-X_{t}(x)|}^{2}=0.
\end{equation*}

\end{theorem}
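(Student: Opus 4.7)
The plan is to transplant the proof of Theorem \ref{thm 2.1} to the present setting, the only genuine novelty being that the initial data $x_l$ and $x$ no longer coincide, so that the term $|x_l-x|^2$ replaces the vanishing initial difference. I will reuse the Itô--BDG--Gronwall--concatenate pipeline, adding a stopping-time cutoff whose residual probability is controlled via the boundedness of $b$ and $\sigma$.

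First I would introduce $\zeta_N:=\inf\{t>0\,;\,|X_t(x_l)|>N \ \text{or}\ |X_t(x)|>N\}$ and apply Itô's formula to $|X_{t\wedge\zeta_N}(x_l)-X_{t\wedge\zeta_N}(x)|^2$, followed by Burkholder and hypothesis \eqref{H1}, exactly as in the proof of Theorem \ref{thm 2.1}. The resulting inequality differs only by an additive initial term $|x_l-x|^2$:
\begin{equation*}
\mathbb{E}\sup_{t\leq T}|X_{t\wedge\zeta_N}(x_l)-X_{t\wedge\zeta_N}(x)|^2
\leq C|x_l-x|^2 + CT\frac{\log N}{N^\mu}
+ C\log N\int_0^T \mathbb{E}\sup_{u\leq s}|X_{u\wedge\zeta_N}(x_l)-X_{u\wedge\zeta_N}(x)|^2\,ds.
\end{equation*}
Gronwall then yields the key estimate
\begin{equation*}
\mathbb{E}\sup_{t\leq T}|X_{t\wedge\zeta_N}(x_l)-X_{t\wedge\zeta_N}(x)|^2
\leq \Bigl(C|x_l-x|^2 + CT\frac{\log N}{N^\mu}\Bigr) N^{CT}.
\end{equation*}

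Next I would pass from the stopped to the unstopped process. Since $b$ and $\sigma$ are bounded, classical BDG estimates give $\mathbb{E}\sup_{t\leq T}|X_t(y)|^4\leq K(1+|y|^4)$, uniformly in $y$ in a bounded set, hence uniformly in $l$. Chebyshev then gives $\mathbb{P}(\zeta_N<T)\leq C/N^2$ uniformly in $l$, and splitting
\begin{equation*}
\mathbb{E}\sup_{t\leq T}|X_t(x_l)-X_t(x)|^2
= \mathbb{E}\bigl[\sup|\cdot|^2\,\1_{\{\zeta_N\geq T\}}\bigr]
+ \mathbb{E}\bigl[\sup|\cdot|^2\,\1_{\{\zeta_N< T\}}\bigr]
\end{equation*}
allows me to dominate the first summand by the stopped expectation above and, via Cauchy--Schwarz together with the uniform fourth-moment bound, send the second summand to $0$ as $N\to\infty$, uniformly in $l$.

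Now, for $T<\mu/C$, I would take $\limsup_{l\to\infty}$ first, which kills $|x_l-x|^2 N^{CT}$ for each fixed $N$, and then let $N\to\infty$, which kills $\log N/N^{\mu-CT}$ because $\mu>CT$. This delivers the claim on the small interval $[0,\mu/C)$. For arbitrary $T$, I would concatenate intervals $[T_k,T_{k+1}]$ of length strictly less than $\mu/C$ in the spirit of Theorem \ref{thm 2.1}: once $\mathbb{E}|X_{T_1}(x_l)-X_{T_1}(x)|^2\to 0$ is known, I restart the same Itô--BDG--Gronwall analysis on $[T_1,T_2]$ with the random initial values $X_{T_1}(x_l)$ and $X_{T_1}(x)$ (the computation is insensitive to the deterministic character of the initial data, and $\mathbb{E}|X_{T_1}(x_l)-X_{T_1}(x)|^2$ plays the role of $|x_l-x|^2$). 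A finite induction then covers $[0,T]$.

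The main obstacle I expect is purely bookkeeping: juggling the two limits $l\to\infty$ and $N\to\infty$ in the right order while keeping all estimates \emph{uniform in $l$}, and, at the concatenation step, cleanly propagating the $L^2$-convergence of the intermediate endpoints $X_{T_k}(x_l)$ to $X_{T_k}(x)$ through a Gronwall argument with random initial data. Neither presents a conceptual difficulty, but both must be stated carefully to avoid a circular passage to the limit.
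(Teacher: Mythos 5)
The paper's own proof is a one-line appeal to pathwise uniqueness (Theorem \ref{thm 2.1}) combined with a general result from the cited reference \cite{BahlaliOuknineMezerdi}, which shows that for bounded continuous coefficients pathwise uniqueness already implies $L^{2}$-stability in the initial datum (a soft tightness-plus-identification argument). Your proposal replaces that with a self-contained quantitative re-run of the Theorem \ref{thm 2.1} machinery, which is a legitimate and genuinely different route: more work, but it would yield an explicit modulus of continuity in $|x_{l}-x|$ on short intervals.

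The argument is correct on $[0,\mu/C)$, but the concatenation step as written has a real gap, not mere bookkeeping. On $[T_{1},T]$ the stopped It\^o--Gronwall bound produces the initial term $\mathbb{E}\,|X_{T_{1}\wedge\zeta_{N}}(x_{l})-X_{T_{1}\wedge\zeta_{N}}(x)|^{2}$, \emph{not} $a_{l}:=\mathbb{E}\,|X_{T_{1}}(x_{l})-X_{T_{1}}(x)|^{2}$, so the claim that ``$\mathbb{E}\,|X_{T_{1}}(x_{l})-X_{T_{1}}(x)|^{2}$ plays the role of $|x_{l}-x|^{2}$'' is not what the computation gives. To pass from the stopped to the unstopped initial term you pick up, via the Cauchy--Schwarz/Chebyshev control you propose, an error of size $O(N^{-2})$ (fourth moments). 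This error sits \emph{inside} the second Gronwall and is therefore multiplied by $N^{C(T-T_{1})}$, giving $O(N^{\,\mu-2})$ as $T-T_{1}\uparrow\mu/C$, which diverges unless $\mu<2$; the parameter $\mu$ in \eqref{H1} is arbitrary, so the estimate is not closed. Two repairs are available: (i) since $b$ and $\sigma$ are bounded you have all moments, so replace the fourth-moment Chebyshev bound by a $2k$-th moment bound with $k>\mu/2+1$; or, cleaner, (ii) insert the $\mathcal{F}_{T_{1}}$-measurable indicator $\1_{\{\zeta_{N}>T_{1}\}}$ \emph{before} applying It\^o and Gronwall on $[T_{1},T]$: on this event $X_{T_{1}\wedge\zeta_{N}}=X_{T_{1}}$ so the initial term is bounded by $a_{l}$ exactly, while $\{\zeta_{N}\leq T_{1}\}$ is handled by Cauchy--Schwarz \emph{outside} the Gronwall and hence never meets the factor $N^{C(T-T_{1})}$. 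With either fix, first sending $l\to\infty$ (killing $a_{l}N^{C(T-T_{1})}$ for fixed $N$) and then $N\to\infty$, your induction over $[T_{k},T_{k+1}]$ goes through.
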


\begin{proof}{}
 Thanks to Theorem \ref{thm 2.1} the pathwise uniqueness holds. The proof follows then from
 \cite{BahlaliOuknineMezerdi}.
\end{proof}

\subsection{Comparison theorem}

Here, we prove a one-dimensional comparison theorem for the solutions of the SDE \eqref{1}.

\begin{theorem}\label{thmcompa}

Suppose, we are given the following:

$(i)$ a real continuous function $\sigma$ defined on $\mathbb{R}$ such that:
\begin{equation} \label{compa1}
 |\sigma(x)-\sigma(y)|\leq C\sqrt{\log{N}} |x-y| + C\frac{\log{N}}{N^{\mu}}
\end{equation}
for all $x,y\in B(N)=\{z\in\mathbb{R}^{d}; |z|\leq N\}$ for any integer $N>e$, and $C$, $\mu$ two positive reals,

$(2i)$ two real continuous functions $b_{1}$ and $b_{2}$ defined on $\mathbb{R}$ such that:
\begin{equation}\label{compa2}
 b_{1}(x)<b_{2}(x), \, \mbox{ for any } x\in\mathbb{R},
\end{equation}

$(3i)$ two real $\mathcal{F}_{t}$-adapted, continuous and conservative processes $x_{1}(t, \omega)$ and $x_{2}(t, \omega)$,

$(4i)$ a one-dimensional $\mathcal{F}_{t}$-Brownian motion $B(t, \omega)$ such that $B(0)=0$, a.s.,

$(5i)$ two real $\mathcal{F}_{t}$-adapted well measurable processes $\beta_{1}(t, \omega)$ and $\beta_{2}(t, \omega)$.\\
Assume that they satisfy the following condition with probability one:
\begin{equation}\label{compa3}
 x_{i}(t)-x_{i}(0)=\int_{0}^{t} \sigma(x_{i}(s))dB(s)+\int_{0}^{t} \beta_{i}(s)ds, \, i=1,2,
\end{equation}
\begin{equation}\label{compa4}
 x_{1}(0) \leq x_{2}(0)
\end{equation}
\begin{equation}\label{compa5}
 \beta_{1}(t) \leq b_{1}(x_{1}(t)) \mbox{ for all } t\geq0,
\end{equation}
\begin{equation}\label{compa6}
 \beta_{2}(t) \geq b_{2}(x_{2}(t)) \mbox{ for all } t\geq0.
\end{equation}

Then, with probability one, we have
\begin{equation}\label{compa}
 x_{1}(t) \leq x_{2}(t), \mbox{ for all } t\geq0.
\end{equation}

If furthermore, the pathwise uniqueness holds for at least one of the following stochastic differential equations:
\begin{equation}\label{compa7}
 dX_{t}=\sigma(X(t))dB(t)+b_{i}(X(t))dt, \, i=1,2,
\end{equation}
then, we have the same conclusion \eqref{compa} by weakening \eqref{compa2} to:
\begin{equation}\label{compa8}
 b_{1}(x) \leq b_{2}(x), \, \mbox{ for any } x\in\mathbb{R}.
\end{equation}

\end{theorem}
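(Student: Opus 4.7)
The plan is to mimic the structure of the proof of Theorem~\ref{thm 2.1}: localize using the stopping times $\zeta_N := \inf\{t > 0 : |x_1(t)| > N \text{ or } |x_2(t)| > N\}$, which tend to $+\infty$ almost surely by the conservative assumption $(3i)$, apply It\^o's formula in the Tanaka form to the positive part of $Z_t := x_1(t) - x_2(t)$, and use Gronwall together with the concatenation-of-intervals trick to extend the conclusion to arbitrary $T$.

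Concretely, I would apply the It\^o--Tanaka formula to the convex function $\phi(u) := (u^+)^2$, which is $C^1$ with weak second derivative $2\mathbf{1}_{\{u > 0\}}$, obtaining after stopping at $\zeta_N$
\begin{equation*}
(Z_{t\wedge\zeta_N}^+)^2 = M_{t\wedge\zeta_N} + 2\int_0^{t\wedge\zeta_N} Z_s^+ \bigl(\beta_1(s) - \beta_2(s)\bigr)\,ds + \int_0^{t\wedge\zeta_N} \mathbf{1}_{\{Z_s > 0\}} \bigl(\sigma(x_1(s)) - \sigma(x_2(s))\bigr)^2\,ds,
\end{equation*}
where $M$ is a local martingale. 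After a routine localization, taking expectation kills $M$, and the last integral is controlled by \eqref{compa1} exactly as in Theorem~\ref{thm 2.1}, yielding a bound $C\log N\cdot \mathbb{E}\int_0^{t\wedge\zeta_N}(Z_s^+)^2\,ds + CT(\log N)^2/N^{2\mu}$.

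The drift integral is where the comparison structure enters. On $\{Z_s > 0\}$, conditions \eqref{compa5}--\eqref{compa6} give $\beta_1 - \beta_2 \leq b_1(x_1) - b_2(x_2) = [b_1(x_1) - b_2(x_1)] + [b_2(x_1) - b_2(x_2)]$. The strict inequality \eqref{compa2} together with the compactness of $\mathbb{B}(N)$ yields $\delta_N := \min_{x \in \mathbb{B}(N)}(b_2(x) - b_1(x)) > 0$, so the first bracket is pointwise $\leq -\delta_N$ and its $Z_s^+$-weighted contribution is nonpositive. The second bracket is only controlled through the (uniform) modulus of continuity of $b_2$ on $\mathbb{B}(N)$, and this is the main obstacle of the proof: without a Lipschitz-type estimate on $b_2$, I cannot directly produce a clean bound of the form $C|Z_s|$. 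My intended resolution is to exploit the fact that $Z_s^+$ itself provides a small weight near the set $\{Z = 0\}$ (where the modulus of $b_2$ is small) and, via Young's inequality, to absorb the modulus-of-continuity contribution into the strictly negative $-\delta_N$ term, so that what remains is an upper bound of the Gronwall form $A_N\,\mathbb{E}\int_0^t (Z_{s\wedge\zeta_N}^+)^2\,ds + B_N(\log N)^2/N^{2\mu}$ with $A_N$ polynomial in $\log N$.

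Once such an estimate is in hand, Gronwall and Fatou yield $\mathbb{E}\sup_{t\leq T}(Z_t^+)^2 = 0$ for $T$ below a threshold depending on $\mu$, and the concatenation of the intervals $[T_k, T_{k+1}[$ used in Theorem~\ref{thm 2.1} extends this to every $T$, giving $x_1(t) \leq x_2(t)$ for all $t \geq 0$ almost surely. For the weakened hypothesis \eqref{compa8} under pathwise uniqueness, I would perturb $b_2$ to $b_2^{\eps} := b_2 + \eps$, which satisfies the strict inequality of the first part; applying the strict case to the pair $(b_1, b_2^{\eps})$ gives $x_1 \leq x_2^{\eps}$ almost surely, and sending $\eps \to 0$ via pathwise uniqueness plus a continuous-dependence-on-coefficients argument in the spirit of Theorem~\ref{thm 3.2} concludes $x_1 \leq x_2$.
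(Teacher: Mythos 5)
Your strategy has a genuine gap that cannot be repaired inside your framework: the drift functions $b_1,b_2$ are only assumed \emph{continuous}, not subject to any Lipschitz, H\"older, or \eqref{H1}-type estimate, so the second bracket $b_2(x_1(s))-b_2(x_2(s))$ in your decomposition has no quantitative control in terms of $Z_s^+$. Your intended repair --- absorb the modulus-of-continuity contribution into the $-\delta_N$ term via Young's inequality because $Z_s^+$ is ``small near $\{Z=0\}$'' --- is circular: to exploit smallness of $\omega_{b_2,N}(Z_s^+)$ you would need to know a priori that $Z_s^+$ stays small, which is precisely the conclusion you are trying to reach. On $\{s\leq\zeta_N\}$ you only have the crude bound $\omega_{b_2,N}(Z_s^+)\leq\omega_{b_2,N}(2N)=:K_N$, and then $2Z_s^+(\,-\delta_N+\omega_{b_2,N}(Z_s^+))\leq 2Z_s^+K_N\leq K_N^2+(Z_s^+)^2$ by Young, so Gronwall only yields $\mathbb{E}(Z_{t\wedge\zeta_N}^+)^2\leq C_N T e^{\ldots}$, which does not vanish. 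The square function $\phi(u)=(u^+)^2$ does not see that the strict inequality $b_1<b_2$ should prevent contact.

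The paper's proof avoids the drift modulus entirely by a different mechanism. It follows Ikeda--Watanabe: introduce the random time $\tau=\inf\{s; b_2(x_2(s))<b_1(x_1(s))\}$ (which is a.s. strictly positive by strict inequality \eqref{compa2} and $x_1(0)=x_2(0)$), and work only on $[0,\tau]$, where $\beta_2(s)-\beta_1(s)\geq b_2(x_2(s))-b_1(x_1(s))\geq 0$, i.e. the drift is \emph{favorable}. It then applies It\^o's formula to the Yamada--Watanabe functions $\psi_n$, whose derivative satisfies $|\psi_n'|\leq1$. This bound --- rather than a Lipschitz or modulus estimate on $b_2$ --- is what controls the drift integral: $\mathbb{E}\int_0^{\tilde t}\psi_n'(x_2-x_1)(\beta_2-\beta_1)\,ds\leq\mathbb{E}\int_0^{\tilde t}(\beta_2-\beta_1)\,ds=\mathbb{E}[x_2(\tilde t)-x_1(\tilde t)]$ using \eqref{compa13}. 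The hypothesis \eqref{compa1} on $\sigma$ handles the second-order term through the $u^2\varphi_n(u)\leq 2/n$ property. Letting $n\to\infty$ and $N\to\infty$ yields $\mathbb{E}|x_2(t\wedge\tau)-x_1(t\wedge\tau)|\leq\mathbb{E}[x_2(t\wedge\tau)-x_1(t\wedge\tau)]$, which forces $x_1\leq x_2$ on $[0,\tau]$. From there a shift argument at $\theta=\inf\{s;x_1(s)>x_2(s)\}$ upgrades the local conclusion to a global one. Your proof of the weakened case via perturbation $b_2^\varepsilon=b_2+\varepsilon$ and $\varepsilon\to0$ is essentially what the paper does, but the paper passes the limit by squeezing between $X^{\pm\varepsilon}$ and invoking pathwise uniqueness of \eqref{compa7} rather than by a continuous-dependence-on-coefficients estimate, which the theorem's hypotheses do not support.
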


\begin{proof}{}
 For the reader's convenience, we proceed as in Ikeda \& Watanabe \cite{IkedaWatanabe}.

 First we prove that
 \begin{equation}\label{compa9}
  \mathbb{P}(\exists t>0; x_{1}(s)\leq x_{2}(s) \mbox{ for all } s\in [0,t])=1
 \end{equation}
under the above assumptions except that \eqref{compa4} is replaced by
\begin{equation}\label{compa4'}
 x_{1}(0)=x_{2}(0).
\end{equation}

For this, let
 \begin{equation}\label{compa10}
  \tau:=\inf\{s; b_{2}(x_{2}(s))<b_{1}(x_{1}(s))\}.
 \end{equation}
 
 For  $N\in\mathbb{N}^{\ast}$ we set
 \begin{equation}\label{compa11}
  \zeta_{N}:=\zeta_{N}^{1}\wedge\zeta_{N}^{2}
 \end{equation}
where
 \begin{equation*}
  \zeta_{N}^{1}=\inf\{t>0; |x_{1}(t)|>N\} \mbox{ and } \zeta_{N}^{2}=\inf\{t>0; |x_{2}(t)|>N\}.
 \end{equation*}

By \eqref{compa2} and \eqref{compa4'}, it is clear that $\mathbb{P}(\tau>0)=1.$ Let $t>0$ be fixed, then
 \begin{equation}\label{compa13}
  \mathbb{E}[x_{2}(t\wedge\tau)-x_{1}(t\wedge\tau)]=
  \mathbb{E}[\int_{0}^{t\wedge\tau} (\beta_{2}(s)-\beta_{1}(s))ds].
 \end{equation}

For $n\in\N^*$, let $(a_n)$ be the sequence defined by: \
$a_{0}=1>a_{1}>a_{2}>\cdots>a_{n}>\cdots\rightarrow0$ \ and satisfies, \
$$\int_{a_{n}}^{a_{n-1}} \frac{du}{u^{2}}=n.$$

For $n\in\N^*$, let $(\varphi_{n})$ be a non-negative continuous functions such that its support is contained 
in $(a_{n},a_{n-1})$ \ and satisfies,
$$\int_{a_{n}}^{a_{n-1}} \varphi_{n}(u)du=1 \ \ \ \mbox{ and } \ \ \  u^{2}\varphi_{n}(u)\leq 2/n.$$

For every $n\in\N^*$, the function $\psi_{n}(x):=\int_{0}^{|x|}\int_{0}^{y} \varphi_{n}(z)dz$ has  then the following properties,
$$\psi_{n}\in C^{2}(\mathbb{R}), \ \ \ \ \ \psi_{n}(x)\uparrow|x| \  \mbox{when} \ n\rightarrow +\infty
\ \ \ \ \ \mbox{and} \ \ \ \ \ |\psi_{n}'(x)|\leq 1.$$

For $t>0$ we set $\tilde t :=t\wedge\tau\wedge\zeta_{N}$. Using It\^o's formula and taking the expectation,
it follows that
\begin{equation}
\begin{split}
 \mathbb{E}\psi_{n}(x_{2}(\tilde{t})-x_{1}(\tilde{t}))=
 \mathbb{E}\int_{0}^{\tilde{t}} \psi_{n}'(x_{2}(s)-x_{1}(s))(\beta_{2}(s)-\beta_{1}(s))ds\\
 +\frac{1}{2}\mathbb{E}\int_{0}^{\tilde{t}}\psi_{n}"(x_{2}(s)-x_{1}(s))(\sigma (x_{2}(s))-\sigma (x_{1}(s)))^{2}ds.
\end{split}
\end{equation}

Since $\tilde{t}\leq\tau$, $\beta_{2}(u)-\beta_{1}(u) \geq b_{2}(x_{2}(u))-b_{1}(x_{1}(u))\geq0$ for all $u\leq\tilde{t}$.

Thanks to hypothesis \eqref{compa1}, we obtain:
\begin{equation}
 \begin{split}
  \mathbb{E}\psi_{n}(x_{2}(\tilde{t})-x_{1}(\tilde{t})) \leq
  \mathbb{E}\int_{0}^{\tilde{t}} \psi_{n}'(x_{2}(s)-x_{1}(s))(\beta_{2}(s)-\beta_{1}(s))ds\\
  +C\log{N}\mathbb{E}\int_{0}^{\tilde{t}}\varphi_{n}(x_{2}(s)-x_{1}(s))(x_{2}(s)-x_{1}(s))^{2}ds\\
  +C\frac{\log{N}}{N^{\mu}}\mathbb{E}\int_{0}^{\tilde{t}}\varphi_{n}(x_{2}(s)-x_{1}(s))ds.
 \end{split}
\end{equation}

Letting $n$ tends to $+\infty$ and using the fact that $|\psi_{n}'(x)|\leq1$ and $u^{2}\varphi_{n}(u)\leq 2/n$, we have
\begin{equation}
\begin{split}
  \mathbb{E}|x_{2}(t\wedge\zeta_{N}\wedge\tau)-x_{1}(t\wedge\zeta_{N}\wedge\tau)|
  &=\lim_{n\rightarrow +\infty}\mathbb{E}[\psi_{n}(x_{2}(t\wedge\zeta_{N}\wedge\tau)-x_{1}(t\wedge\zeta_{N}\wedge\tau)]\\
  &\leq \mathbb{E}\int_{0}^{t\wedge\zeta_{N}\wedge\tau} (\beta_{2}(s)-\beta_{1}(s))ds \\
  &+C\frac{\log{N}}{N^{\mu}}\limsup_{n\rightarrow +\infty}
  \mathbb{E}\int_{0}^{t\wedge\zeta_{N}\wedge\tau}\varphi_{n}(x_{2}(s)-x_{1}(s))ds.
\end{split}
\end{equation}

Since the processes are assumed to be conservative, then letting  $N$ tends to $+\infty$ and using \eqref{compa13}, it follows  that:
\begin{equation}
 \mathbb{E}|x_{2}(t\wedge\tau)-x_{1}(t\wedge\tau)|\leq \mathbb{E}[x_{2}(t\wedge\tau)-x_{1}(t\wedge\tau)].
\end{equation}

By the continuity of $x_{i}(s)$, we have
\begin{equation}
 \mathbb{P}\{t\in [0,\tau] \Rightarrow x_{1}(t) \leq x_{2}(t)\}=1
\end{equation}
and this implies \eqref{compa9}.

To prove the first assertion of the theorem, we let $\theta=\inf\{s;x_{1}(s)>x_{2}(s)\}$ and then it suffices
to show that $\theta=\infty$, almost surely.

Suppose, on the contrary, $\mathbb{P}[\theta<\infty]>0$ and set
$\tilde{\Omega}=\{\omega;\theta(\omega)<\infty\}$, $\tilde{\mathcal{F}_{t}}=\mathcal{F}_{t+\theta}|\tilde{\Omega}$,
$\tilde{\mathcal{F}}=\mathcal{F}|\tilde{\Omega}$ and $\tilde{\mathbb{P}}(A)=\mathbb{P}(A)/\mathbb{P}(\tilde{\Omega})$,
$A\in\tilde{\mathcal{F}}$.

On the space $(\tilde{\Omega},\tilde{\mathcal{F}},\tilde{\mathbb{P}},\tilde{\mathcal{F}_{t}})$,
we set $\tilde{x}_{i}(t)=x_{i}(t+\theta)$,
$\tilde{\beta}_{i}(t)=\beta_{i}(t+\theta)$, $i=1,2$, and $\tilde{B}(t)=B(t+\theta)-B(\theta)$.

Then, it is clear that $\tilde{x}_{1}(0)=x_{1}(\theta)=x_{2}(\theta)=\tilde{x}_{2}(0)$ almost surely and also,
$\tilde{\beta}_{1}(t)\leq b_{1}(\tilde{x}_{1}(t))$, $\tilde{\beta}_{2}(t)\geq b_{2}( \tilde{x}_{2}(t))$ almost surely.

Furthermore,
\begin{equation*}
 \tilde{x}_{i}(t)-\tilde{x}_{i}(0)=\int_{0}^{t} \sigma(\tilde{x}_{i}(s))d\tilde{B}(s)
 +\int_{0}^{t} \tilde{\beta}_{i}(s)ds, \, i=1,2.
\end{equation*}

Therefore, we can apply \eqref{compa9} and obtain
\begin{equation*}
 \tilde{\mathbb{P}}[\exists t>0; s\in [0,t] \Rightarrow \tilde{x}_{1}(s) \leq \tilde{x}_{2}(s)]=1.
\end{equation*}

But this contradicts with the definition of $\theta$. Therefore, $\theta=\infty$ almost surely and hence \eqref{compa} is proved.

The second assertion can be proved by similar arguments as in Ikeda \& Watanabe \cite{IkedaWatanabe}.
To be quite explicit, assume that one of the SDEs \eqref{compa7}, say for $i=1$, the pathwise
uniqueness  holds. Let $X(t)$ be the solution of the equation
\begin{equation}\label{compadeux}
X(t)=x_{1}(0)+\int_{0}^{t} \sigma(X(s))dW_{s} + \int_{0}^{t} b_{1}(X(s))ds
\end{equation}
and for $\varepsilon>0$, $X^{\pm\varepsilon}(t)$ the respective solutions of
\begin{equation}
X^{\pm\varepsilon}(t)=x_{1}(0)+\int_{0}^{t} \sigma(X(s))dW_{s} + \int_{0}^{t} [b_{1}(X(s))\pm\varepsilon]ds.
\end{equation}

Then, by the first part of the proof, we have
\begin{equation}
X^{-\varepsilon}(t)\leq X(t) \leq X^{\varepsilon}(t), \, \mbox{ for all } t\geq0.
\end{equation}

Now, noticing that $\beta_{1}(t)\leq b_{1}(x_{1}(t))$ \ $a.s.$ and $b_{1}(x)<b_{1}(x)+\varepsilon$,
we obtain thanks to the first part of the proof,
$x_{1}(t)\leq X^{\varepsilon}(t)$ and then, tending $\varepsilon$ to 0, we get $x_{1}(t)\leq X(t)$.
In a same manner, notice that $\beta_{2}(t)\geq b_{2}(x_{2}(t))$ a.s. and $b_{2}(x)\geq b_{1}(x)> b_{1}(x)-\varepsilon$.
Then, again thanks to the first part, we have $X^{-\varepsilon}(t)\leq x_{2}(t)$, and tending $\varepsilon$ to 0,
we get $X(t)\leq x_{2}(t)$. This achieves the proof.
\end{proof}

\subsection{Non-contact property}

Now, we prove the non-confluence of the solutions of the SDE \eqref{1}.

\begin{theorem}\label{non-contact}

We let $T>0$ given, we assume that the coefficients $\sigma$ and $b$ satisfy hypothesis \eqref{H1} and we assume that
the solutions of SDE \eqref{1} are conservative. For any $x,y\in\mathbb{R}^{d}$, we denote by
$X_{t}(x)$ and $X_{t}(y)$ the solutions of SDE \eqref{1} starting respectively from x and y.

Then, if $x\neq y$ we have almost surely $X_{t}(x) \neq X_{t}(y)$ for all $0\leq t\leq T$.

\end{theorem}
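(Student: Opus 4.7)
The plan is to adapt the scheme of Theorem~\ref{thm 2.1}: apply It\^o's formula (this time to a negative power of $|X_t(x)-X_t(y)|$), localize by stopping times, exploit hypothesis~\eqref{H1} together with Gronwall, and iterate on concatenated intervals. Set $Z_t := X_t(x)-X_t(y)$, so that $|Z_0|=|x-y|>0$. Introduce the ball-exit times $\zeta_N := \inf\{t>0 : |X_t(x)|\vee|X_t(y)| > N\}$, which tend to $+\infty$ almost surely by the conservativeness assumption, and the approach times $\tau_\epsilon := \inf\{t>0 : |Z_t|\leq\epsilon\}$. The theorem reduces to showing that $\mathbb{P}(\tau_\epsilon \leq T)\to 0$ as $\epsilon\to 0^+$.

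For a small parameter $\alpha>0$ (to be tuned below), I would apply It\^o's formula to $|Z_t|^{-\alpha}$ on the stochastic interval $[0,\tau_\epsilon\wedge\zeta_N]$, on which $|Z_t|\geq \epsilon>0$. Using the elementary bound $|(\sigma(X_t(x))-\sigma(X_t(y)))^\top Z_t|^2 \leq \|\sigma(X_t(x))-\sigma(X_t(y))\|^2|Z_t|^2$ together with \eqref{H1}, the drift of $|Z_t|^{-\alpha}$ is bounded above by $\tilde c(N,\epsilon,\alpha)\,|Z_t|^{-\alpha}$, where
\begin{equation*}
\tilde c(N,\epsilon,\alpha) := \alpha C\bigl(1+(\alpha+1)C\bigr)\log N + \frac{\alpha C\log N}{N^\mu\epsilon} + \frac{\alpha(\alpha+1)(C\log N)^2}{N^{2\mu}\epsilon^2}.
\end{equation*}
The stopped stochastic-integral part is a true martingale, since its integrand is bounded on $[0,\tau_\epsilon\wedge\zeta_N]$. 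Taking expectations and applying Gronwall's lemma yields
\begin{equation*}
\mathbb{E}\bigl[|Z_{t\wedge\tau_\epsilon\wedge\zeta_N}|^{-\alpha}\bigr] \leq |Z_0|^{-\alpha}\,\exp\bigl(\tilde c(N,\epsilon,\alpha)\,t\bigr),
\end{equation*}
and since $|Z_{\tau_\epsilon}|=\epsilon$ on $\{\tau_\epsilon\leq t\wedge\zeta_N\}$, Markov's inequality gives
\begin{equation*}
\mathbb{P}(\tau_\epsilon\leq t\wedge\zeta_N)\leq \epsilon^\alpha |Z_0|^{-\alpha}\exp\bigl(\tilde c(N,\epsilon,\alpha)\,t\bigr).
\end{equation*}

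The key step is the coupling $\epsilon=N^{-\gamma}$ with $0<\gamma<\mu$: then $N^\mu\epsilon=N^{\mu-\gamma}$ and $N^{2\mu}\epsilon^2=N^{2(\mu-\gamma)}$ both tend to $+\infty$, so $\tilde c(N,\epsilon,\alpha)\sim \alpha C(1+(\alpha+1)C)\log N$, and the above bound becomes $|Z_0|^{-\alpha}\,N^{-\alpha\gamma + \alpha C(1+(\alpha+1)C)\,t+o(1)}$. Choosing $\alpha$ small enough so that $C(1+(\alpha+1)C)\,t<\gamma$ makes the exponent of $N$ strictly negative on a short time window $[0,T_\star]$ (with $T_\star$ an explicit fraction of $\mu/C$); together with $\mathbb{P}(\zeta_N\leq T_\star)\to 0$ by conservativeness, this gives $\mathbb{P}(\tau_\epsilon\leq T_\star)\to 0$ as $N\to\infty$, hence the non-contact property on $[0,T_\star]$. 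To cover an arbitrary $T>0$, I would concatenate on intervals $[kT_\star,(k+1)T_\star]$ exactly as in Theorem~\ref{thm 2.1}: since $Z_{kT_\star}\neq 0$ almost surely (induction) and $\mathbb{E}|Z_{kT_\star}|^{-\alpha}<\infty$, the strong Markov property allows restarting the argument with $(X_{kT_\star}(x),X_{kT_\star}(y))$ as new initial datum.

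The main obstacle is the joint scaling of $\epsilon$ and $N$: the additive error $C\log N/N^\mu$ in \eqref{H1} produces precisely the terms $(\log N)/(N^\mu\epsilon)$ and $(\log N)^2/(N^{2\mu}\epsilon^2)$ in $\tilde c(N,\epsilon,\alpha)$, which blow up if one naively sends $\epsilon\to 0$ with $N$ fixed. The coupling $\epsilon=N^{-\gamma}$ with $\gamma\in(0,\mu)$ tames them but imposes a smallness constraint on the time horizon that is of exactly the same nature as the constraint $T<\mu/C$ encountered in Theorem~\ref{thm 2.1}, and is resolved by the same concatenation trick.
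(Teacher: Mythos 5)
Your proposal is correct and is essentially the paper's argument in disguise: the paper applies It\^o's formula to $F(\eta_t)=(\varepsilon+|\eta_t|^2)^p$, takes $p=-1$ and lets $\varepsilon\to 0$, and couples the threshold stopping time to $N$ by fixing $\tau_N=\inf\{t:|\eta_t|^2=N^{-\mu}\}$ (your choice $\gamma=\mu/2$), then Gronwall, Markov, and the same concatenation in $T_k=k\mu/C$; working directly with $|Z_t|^{-\alpha}$ and leaving $\gamma\in(0,\mu)$, $\alpha>0$ free are cosmetic generalizations of the same computation. One small repair: the concatenation step should not rest on $\mathbb{E}\,|Z_{kT_\star}|^{-\alpha}<\infty$, which neither you nor the paper actually establishes (and which the Gronwall bound does not yield, since the exponential factor blows up as $N\to\infty$); instead, condition on $\mathcal{F}_{kT_\star}$ and use only $|Z_{kT_\star}|>0$ a.s.\ to get $\mathbb{P}\bigl(\tau\in(kT_\star,(k+1)T_\star]\mid\mathcal{F}_{kT_\star}\bigr)=0$ almost surely, which suffices and is what the strong Markov restart really delivers.
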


\begin{proof}{}
For all $\varepsilon >0$ and any real $p$, we consider the function $F(x)=f(x)^{p}$ with $f(x)=\varepsilon + {| x |}^{2}.$

We let $\tau:=\inf\{t>0; |X_{t}(x)-X_{t}(y)|^{2}=0\}$ and for any $N\in\mathbb{N}^{\ast}$, we set
$$\zeta_{N}:=\inf\{t>0; |X_{t}(x)|>N \mbox{ or } |X_{t}(y)|>N \}$$
and  $$\tau_{N}:=\inf\{t>0; |X_{t}(x)-X_{t}(y)|^{2}=\frac{1}{N^{\mu}} \}.$$
Then, as $N$ goes to $+\infty$, we have $\zeta_{N}$ tends to $+\infty$ a.s. and $\tau_{N}$ tends to $\tau$ a.s.

Set $\eta_{t}:=X_{t}(x)-X_{t}(y)$. We use It\^o formula to get
\begin{equation*}
\begin{split}
F(\eta_{t\wedge\zeta_{N}})&=F(\eta_{0}) \\
&+\int_{0}^{t\wedge\zeta_{N}}
\langle D^{1}F(\eta_{s\wedge\zeta_{N}}),
(\sigma (X_{s\wedge\zeta_{N}}(x))-\sigma (X_{s\wedge\zeta_{N}}(y)))dW_{s} \rangle \\
&+\int_{0}^{t\wedge\zeta_{N}} \langle D^{1}F(\eta_{s\wedge\zeta_{N}}),
(b(X_{s\wedge\zeta_{N}}(x))-b(X_{s\wedge\zeta_{N}}(y))) \rangle ds \\
&+ \frac{1}{2} \int_{0}^{t\wedge \zeta_{N}} \mbox{Trace}\{  D^{2}F(\eta_{s\wedge\zeta_{N}})
(\sigma (X_{s\wedge\zeta_{N}}(x))-\sigma (X_{s\wedge\zeta_{N}}(y))) \\
&(\sigma(X_{s\wedge\zeta_{N}}(x))-\sigma(X_{s\wedge\zeta_{N}}(y)))^{\top} \} ds
\end{split}
\end{equation*}
where $D^{1}F$ and $D^{2}F$ are, respectively, the gradient and the Hessian matrix of F.

Taking, respectively, the expectation and the absolute value, it follows that
\begin{equation*}
\begin{split}
\mathbb{E}[F(\eta_{t\wedge\zeta_{N}})]&\leq F(\eta_{0}) \\
& + 2 |p| \mathbb{E} \int_{0}^{t}|\eta_{s\wedge\zeta_{N}}|
{|f(\eta_{s\wedge\zeta_{N}})|}^{p-1}|b(X_{s\wedge\zeta_{N}}(x))-b(X_{s\wedge\zeta_{N}}(y))| ds \\
&+|p| \mathbb{E}\int_{0}^{t}[|f(\eta_{s\wedge\zeta_{N}})|^{p-1}+2|p-1||{\eta_{s\wedge\zeta_{N}}}|^{2}|
f(\eta_{s\wedge\zeta_{N}})|^{p-2}] \\
&{||\sigma (X_{s\wedge\zeta_{N}}(x))-\sigma (X_{s\wedge\zeta_{N}}(y))||}^{2}ds.
\end{split}
\end{equation*}

According to assumption \eqref{H1}, we have
\begin{equation*}
\begin{split}
\mathbb{E}[F(\eta_{t\wedge\zeta_{N}})]\leq F(\eta_{0})+ 2 C|p|\log{N} \mathbb{E}
\int_{0}^{t}|\eta_{s\wedge\zeta_{N}}|^{2}
{|f(\eta_{s\wedge\zeta_{N}})|}^{p-1}ds\\
+ 2C|p|\frac{\log{N}}{N^{\mu}}\mathbb{E}\int_{0}^{t} {|f(\eta_{s\wedge\zeta_{N}})|}^{p-1}ds \\
+ C|p|\frac{\log{N}}{N^{\mu}}\mathbb{E}\int_{0}^{t} |f(\eta_{s\wedge\zeta_{N}})|^{p-1}ds\\
+2C|p(p-1)|\frac{\log{N}}{N^{\mu}}\mathbb{E}\int_{0}^{t} |\eta_{s\wedge\zeta_{N}}|^{2}|
f(\eta_{s\wedge\zeta_{N}})|^{p-2}ds\\
+C|p|\log{N} \mathbb{E}\int_{0}^{t}[|\eta_{s\wedge\zeta_{N}}|^{2}|f(\eta_{s\wedge\zeta_{N}})|^{p-1}\\
+2|p-1||\eta_{s\wedge\zeta_{N}}|^{4}|f(\eta_{s\wedge\zeta_{N}})|^{p-2}]ds.
\end{split}
\end{equation*}

Since $|\eta_{s\wedge\zeta_{N}}|^{2}\leq f(\eta_{s\wedge\zeta_{N}})$, it follows that
\begin{equation}\label{hamdoull}
\begin{split}
\mathbb{E}[F(\eta_{t\wedge\zeta_{N}})]\leq F(\eta_{0})+ C(p)\log{N} \mathbb{E} \int_{0}^{t}
{|f(\eta_{s\wedge\zeta_{N}})|}^{p}ds \\
+ C(p)\frac{\log{N}}{N^{\mu}}\mathbb{E}\int_{0}^{t} {|f(\eta_{s\wedge\zeta_{N}})|}^{p-1}ds
\end{split}
\end{equation}
where $C(p)$ is a positive constant which depends only on $p$.

Since \
$|f(\eta_{s\wedge\tau_{N}\wedge\zeta_{N}})|^{-1}\leq |\eta_{s\wedge\tau_{N}\wedge\zeta_{N}}|^{-2}\leq N^{\mu}$,
it follows that
\begin{equation}\label{zouina}
\mathbb{E}[F(\eta_{t\wedge\tau_{N}\wedge\zeta_{N}})]\leq F(\eta_{0})+ 2C(p)\log{N}
\int_{0}^{t} \mathbb{E}[F(\eta_{s\wedge\tau_{N}\wedge\zeta_{N}})]ds.
\end{equation}

Using Gronwall's lemma, we obtain
\begin{equation}\label{zouina0}
 \mathbb{E}[F(\eta_{t\wedge\tau_{N}\wedge\zeta_{N}})] \leq F(\eta_{0})N^{2C(p)t}
\end{equation}
 that is
\begin{equation}\label{zouina1}
 \mathbb{E}[(\varepsilon+|X_{t\wedge\tau_{N}\wedge\zeta_{N}}(x)-X_{t\wedge\tau_{N}\wedge\zeta_{N}}(y)|^{2})^{p}]
 \leq (\varepsilon + |x-y|^{2})^{p}N^{2C(p)t}.
\end{equation}

Letting $\varepsilon$ tends to 0 in the previous inequality  and we get
\begin{equation}\label{zouina2}
 \mathbb{E}[|X_{t\wedge\tau_{N}\wedge\zeta_{N}}(x)-X_{t\wedge\tau_{N}\wedge\zeta_{N}}(y)|^{2p}]
 \leq |x-y|^{2p}N^{C(p)t}.
\end{equation}

On the subset $\{\tau_{N}\leq t\wedge\zeta_{N}\}$, we have
\begin{equation}\label{meziane}
 \frac{1}{N^{p\mu}}\mathbb{P}[\tau_{N}\leq t\wedge\zeta_{N}]\leq |x-y|^{2p}N^{2C(p)t}.
\end{equation}

Taking $p=-1$ in the previous inequality, we get
\begin{equation}\label{meziane2}
 \mathbb{P}[\tau_{N}\leq t\wedge\zeta_{N}]\leq |x-y|^{-2}N^{Ct-\mu}.
\end{equation}

Letting  $N$ tends  to $+\infty$ in the previous inequality, we obtain for $t<\mu/C$,
\begin{equation}\label{meziane3}
 \mathbb{P}[\tau\leq t]=0.
\end{equation}

Starting now from $\mu/C$ and using the same arguments as above, we get for any $t\in [\mu/C;2\mu/C[$,
\begin{equation*}
 \mathbb{P}[\tau\leq t]=0.
\end{equation*}

The sequence $T_{k}:=k\mu/C$ goes to $+\infty$ as $k$ tends $+\infty$.

Arguing recursively on $k$, one can  prove that for any $t\in [T_{k}, T_{k+1}[$
\begin{equation*}
 \mathbb{P}[\tau\leq t]=0.
\end{equation*}

This shows that, for any $t\geq 0$
\begin{equation}\label{mezianelast}
 \mathbb{P}[\tau\leq t]=0
\end{equation}
which implies that $\tau = \infty$ \ $a.s$.  The theorem is proved.
\end{proof}

\subsection{Continuous dependence}

In what follows, we prove that the solution of the SDE \eqref{1} has a continuous modification in $(t,x)$.

\begin{lemma} \label{lem 3.1}

Assume that the coefficients $\sigma$ and $b$ are bounded and satisfy hypothesis $\eqref{H1}$.
Then, for any $R, T>0$ and each $p\geq 1$, there exists a positive constant $C_{p,R,T}$ such that 
for any $|x|\leq R$, $|y|\leq R$ and any $s,t\in [0,T]$,
\begin{equation}\label{211}
 \mathbb{E}[|X_{t}(x)-X_{s}(y)|^{2p}] \leq C_{p,R,T}(|t-s|^{p}+|x-y|^{2p}+|x-y|^{p/2}+|x-y|^{5p/2})
\end{equation}
\end{lemma}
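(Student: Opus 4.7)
The plan is to establish the Kolmogorov-type continuity bound by splitting $|X_t(x) - X_s(y)|^{2p}$ into a temporal and a spatial increment, namely
\[
|X_t(x) - X_s(y)|^{2p} \leq 2^{2p-1}\bigl(|X_t(x) - X_t(y)|^{2p} + |X_t(y) - X_s(y)|^{2p}\bigr),
\]
and handling each piece separately. The temporal piece is straightforward: from $X_t(y) - X_s(y) = \int_s^t b(X_u(y))\,du + \int_s^t \sigma(X_u(y))\,dW_u$, the boundedness of $b$ and $\sigma$ together with the Burkholder--Davis--Gundy inequality immediately yield $\mathbb{E}|X_t(y) - X_s(y)|^{2p} \leq C_{p,T}|t-s|^p$, which accounts for the $|t-s|^p$ contribution in the stated bound.

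For the spatial piece, I would mimic the It\^o-formula computation used in the proof of Theorem~\ref{non-contact}. Set $\eta_t := X_t(x) - X_t(y)$ and introduce the exit time $\zeta_N := \inf\{t \geq 0 : |X_t(x)|\vee|X_t(y)|>N\}$. Applying It\^o's formula to $(|\eta_t|^2 + \eps)^p$, estimating the resulting drift and quadratic-variation terms via hypothesis \eqref{H1} and Young's inequality, and then letting $\eps \downarrow 0$ produces an integral inequality of the form
\[
\mathbb{E}|\eta_{t\wedge\zeta_N}|^{2p} \leq C_p|x-y|^{2p} + \mathrm{err}(N,T) + C_p\log N \int_0^t \mathbb{E}|\eta_{s\wedge\zeta_N}|^{2p}\,ds,
\]
where $\mathrm{err}(N,T)$ collects the contributions of the $C\log N / N^{\mu}$ perturbation in \eqref{H1} (combined with the a priori bound $|\eta_{s\wedge\zeta_N}|\leq 2N$) and decays as a polynomial in $N$ divided by a power of $N^{\mu}$. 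Gronwall's lemma then yields
\[
\mathbb{E}|\eta_{T\wedge\zeta_N}|^{2p} \leq \bigl[C|x-y|^{2p} + \mathrm{err}(N,T)\bigr]\,N^{C_p T}.
\]
To pass from $\eta_{T\wedge\zeta_N}$ back to $\eta_T$, I would rely on the moment bounds $\mathbb{E}\sup_{t\leq T}|X_t(x)|^{q} \leq C_{q,R,T}$ (valid because $b,\sigma$ are bounded), use Chebyshev's inequality to get $\mathbb{P}(\zeta_N \leq T) \leq C_{q,R,T}/N^q$ for arbitrary $q$, and apply Cauchy--Schwarz on the event $\{\zeta_N \leq T\}$.

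To keep the Gronwall factor $N^{C_p T}$ manageable for large $T$, I would concatenate the estimate over sub-intervals $[T_k, T_{k+1})$ with $T_k = k\mu/C_p$, exactly as in the proof of Theorem~\ref{thm 2.1}. The final step is to optimize the free parameter $N$ as a suitable negative power of $|x-y|$ so that $|x-y|^{2p}N^{C_p T}$ is balanced against the various error contributions; the exponents $p/2$ and $5p/2$ in \eqref{211} emerge from this balancing, the term $|x-y|^{p/2}$ arising from the interplay between the Gronwall factor $N^{C_p T}$ and the $(\log N)/N^{\mu}$ perturbation, and $|x-y|^{5p/2}$ absorbing the intermediate contributions (notably the Cauchy--Schwarz term coming from the tail of $\zeta_N$). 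The main obstacle is precisely this bookkeeping: the choice of $N(|x-y|)$ and of the moment exponent $q$ must be made so that every error term is dominated by one of the three stated powers of $|x-y|$ simultaneously, which is where the unusual exponents $p/2$ and $5p/2$ on the right-hand side get locked in.
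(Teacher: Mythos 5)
Your high-level framework is right — split into temporal and spatial increments, handle the temporal piece via boundedness plus Burkholder--Davis--Gundy, and attack the spatial piece with It\^o's formula, hypothesis \eqref{H1} and Gronwall inside a localization $\zeta_N$. But the core of the argument then diverges from the paper, and the divergence contains a real gap.

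The paper does \emph{not} choose a single optimal $N$. It sums over all integers $N\geq 1$, using the identity
$(\eps+|X_t(x)-X_t(y)|^2)^p=\sum_{N\geq1}(\eps+|X_t^N(x)-X_t^N(y)|^2)^p\,1_{\{N-1\leq Y_T(x)\vee Y_T(y)<N\}}$
with truncated coefficients $\sigma_N=\varphi_N\sigma$, $b_N=\varphi_N b$, and then controls the indicator via the exponential moment bound $\sup_{|x|\leq R}\mathbb{E}[e^{\delta_R Y_T^2(x)}]<\infty$. The sub-Gaussian factor $e^{-\delta_R(N-1)^2/4}$ is what tames the polynomial growth $N^{C(\mu,p,T)}$ coming from Gronwall, and it does so \emph{uniformly in} $T$, so no concatenation over $[T_k,T_{k+1})$ is needed (or used) in this lemma. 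The other ingredient you are missing is the hitting time $\tau_N=\inf\{t:\,|X_t(x)-X_t(y)|^2=N^{-\mu}\}$ and the estimate $\mathbb{P}[\tau_N\leq T\wedge\zeta_N]\leq|x-y|^{2p}N^{2C(p)T+p\mu}$ from the proof of Theorem~\ref{non-contact}; the Cauchy--Schwarz split on $\{\tau_N\leq T\wedge\zeta_N\}$ vs.\ its complement is what produces the quarter-power $\bigl(\mathbb{P}[\tau_N\leq T\wedge\zeta_N]\bigr)^{1/4}\leq|x-y|^{p/2}N^{(2C(p)T+p\mu)/4}$, and hence precisely the terms $|x-y|^{p/2}$ and $|x-y|^{5p/2}$ in the statement. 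Your claim that these exponents ``emerge from balancing'' by choosing $N\sim|x-y|^{-\beta}$ is not substantiated, and I do not see how it could be: with a single $N$, the Gronwall error $N^{-p\mu}N^{CT}$ cannot be made small once $CT>p\mu$, so the optimization only closes for $T$ small. Your proposed remedy, concatenating over $T_k=k\mu/C_p$ as in Theorem~\ref{thm 2.1}, does not carry over: there the quantity being iterated is identically zero at each restart time, so the bootstrap restarts cleanly, whereas here $|X_{T_k}(x)-X_{T_k}(y)|$ is a nonzero random variable. Feeding the three-term bound $|x-y|^{2p}+|x-y|^{p/2}+|x-y|^{5p/2}$ back into itself across sub-intervals would degrade the exponents and cannot reproduce a bound with the same powers for all $T$, which is what the lemma asserts.

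In short: the overall strategy (It\^o, Gronwall, localize, control the tail) is the same, but the mechanism by which the paper removes the localization — summing over $N$ against an exponential moment, not optimizing a single $N$ — and the $\tau_N$ device that generates the $|x-y|^{p/2}$ exponent are both absent from your proposal, and your substitute (optimize $N$ plus concatenate in $T$) has a genuine gap for $T>p\mu/(2C)$.
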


\begin{proof}{}
In the following we keep the same notations and arguments as in the proof of the
non-contact property (Theorem \ref{non-contact}).

Set $f_{N}(x):=\varepsilon + {|x|}^{2} + \frac{1}{N^{\mu}}$ and $F_{N}(x)=f_{N}(x)^{p}$.
Then, by similar arguments as in proof of non-contact property, we have the following inequality which is similar to
\eqref{hamdoull} with $F$ and $f$ replaced by $F_{N}$ and $f_{N}$:
\begin{equation}\label{hamdoullboy}
\begin{split}
\mathbb{E}[F_{N}(\eta_{t\wedge\zeta_{N}})]\leq F_{N}(\eta_{0})+ C(p)\log{N} \mathbb{E} \int_{0}^{t}
{|f_{N}(\eta_{s\wedge\zeta_{N}})|}^{p}ds \\
+ C(p)\frac{\log{N}}{N^{\mu}}\mathbb{E}\int_{0}^{t} {|f_{N}(\eta_{s\wedge\zeta_{N}})|}^{p-1}ds.
\end{split}
\end{equation}

Since $f^{-1}_{N}(x)\leq N^{\mu}$, then we have
\begin{equation}\label{hamdoullboy2}
\mathbb{E}[F_{N}(\eta_{t\wedge\zeta_{N}})]\leq F_{N}(\eta_{0})+ 2C(p)\log{N} \int_{0}^{t}
\mathbb{E}[F_{N}(\eta_{s\wedge\zeta_{N}})]ds.
\end{equation}

Thanks to the Gronwall's lemma, it follows that
\begin{equation}\label{11copy}
\mathbb{E}[F_{N}(\eta_{t\wedge\zeta_{N}})]\leq F_{N}(\eta_{0})N^{2C(p)t}
\end{equation}
and that is
\begin{equation}\label{11copy2}
 \mathbb{E}(\varepsilon +\frac{1}{N^{\mu}} + |X_{t\wedge\zeta_{N}}(x)-X_{t\wedge\zeta_{N}}(y)|^{2})^{p}
 \leq (\varepsilon +\frac{1}{N^{\mu}} + |x-y|^{2})^{p}N^{2C(p)t}.
\end{equation}

For  $T>0$, we set $Y_{T}(x)=\sup_{t\leq T} |X_{t}(x)|$.
We consider a family of smooth functions
\\
 $\varphi_{N}:\mathbb{R}^{d}\longmapsto\mathbb{R}$ satisfying
$$0\leq\varphi_{N}\leq1, \ \ \
\varphi_{N}(x)=1 \mbox{ for } |x|\leq N, \ \ \ and \ \ \  \varphi_{N}(x)=0 \mbox{ for } |x|>N+1.$$

Define $\sigma_{N}(x):=\varphi_{N}(x)\sigma(x)$, \  $b_{N}(x):=\varphi_{N}(x)b(x)$.  
Let $(X_{t}^{N}(x))$ be the solution of the SDE
\begin{equation}
 X^{N}_{t}=x+\int_{0}^{t} \sigma_{N}(X^{N}_{s})dW_{s}+\int_{0}^{t}b_{N}(X^{N}_{s})ds.
\end{equation}

Arguing as in  \cite{Globalflow}, we show that,
\begin{equation*}
(\varepsilon + {| X_{t}(x)-X_{t}(y) |}^{2})^{p}=\sum_{N=1}^{+\infty}
(\varepsilon + {| X^{N}_{t}(x)-X^{N}_{t}(y) |}^{2})^{p} 1_{\{N-1\leq Y_{T}(x)\vee Y_{T}(y) <N\}},
\end{equation*}
which implies that,
\begin{equation*}
\begin{split}
(\varepsilon &+ {| X_{t}(x)-X_{t}(y) |}^{2})^{p}=\\
&\sum_{N=1}^{+\infty}
(\varepsilon + {| X^{N}_{t}(x)-X^{N}_{t}(y) |}^{2})^{p}
(1_{\{\tau_{N}\leq T\wedge\zeta_{N}\}}+1_{\{\tau_{N} > T\wedge\zeta_{N}\}})
1_{\{N-1\leq Y_{T}(x)\vee Y_{T}(y) <N\}}.
\end{split}
\end{equation*}

Thanks to the pathwise uniqueness, it follows that
\begin{equation*}
\begin{split}
(\varepsilon &+ {| X_{t}(x)-X_{t}(y) |}^{2})^{p}=\\
&+\sum_{N=1}^{+\infty}
(\varepsilon + {| X_{t\wedge\zeta_{N}}(x)-X_{t\wedge\zeta_{N}}(y) |}^{2})^{p}
1_{\{N-1\leq Y_{T}(x)\vee Y_{T}(y) <N\}}\times 1_{\{\tau_{N}\leq T\wedge\zeta_{N}\}}\\
&+\sum_{N=1}^{+\infty} (\varepsilon +
{| X_{t\wedge\zeta_{N}\wedge\tau_{N}}(x)-X_{t\wedge\zeta_{N}\wedge\tau_{N}}(y) |}^{2})^{p}
1_{\{N-1\leq Y_{T}(x)\vee Y_{T}(y) <N\}}\times 1_{\{\tau_{N} > T\wedge\zeta_{N}\}}.
\end{split}
\end{equation*}

Taking the expection in the above inequality and thanks to Cauchy-Schwartz inequality, we get
\begin{equation*}
\begin{split}
\mathbb{E}[(\varepsilon + {|X_{t}(x)-X_{t}(y)|}^{2})^{p}]\leq \sum_{N=1}^{+\infty}
(\mathbb{E}[(\varepsilon + {| X_{t\wedge\zeta_{N}}(x)-X_{t\wedge\zeta_{N}}(y) |}^{2})^{2p}])^{1/2}
\\
\times (\mathbb{P}[N-1 \leq Y_{T}(x)\vee Y_{T}(y)])^{1/4}(\mathbb{P}[\tau_{N}\leq T\wedge\zeta_{N}])^{1/4} \\
+
\sum_{N=1}^{+\infty}
(\mathbb{E}[(\varepsilon + {| X_{t\wedge\tau_{N}\wedge\zeta_{N}}(x)-X_{t\wedge\tau_{N}\wedge\zeta_{N}}(y) |}^{2})^{2p}])^{1/2}
\\
\times (\mathbb{P}[N-1\leq Y_{T}(x)\vee Y_{T}(y)])^{1/4}(\mathbb{P}[\tau_{N} >T\wedge\zeta_{N}])^{1/4}.
\end{split}
\end{equation*}

Since \
$(\varepsilon + {| X_{t\wedge\zeta_{N}}(x)-X_{t\wedge\zeta_{N}}(y) |}^{2})
\leq
(\varepsilon + \frac{1}{N^{\mu}} + {| X_{t\wedge\zeta_{N}}(x)-X_{t\wedge\zeta_{N}}(y) |}^{2})$
and $\mathbb{P}[\tau_{N} >T\wedge\zeta_{N}]\leq 1$, it follows thanks to \eqref{meziane} that:
\begin{equation*}
\begin{split}
\mathbb{E}[(\varepsilon + {|X_{t}(x)-X_{t}(y)|}^{2})^{p}]\leq \sum_{N=1}^{+\infty}
(\mathbb{E}[(\varepsilon + \frac{1}{N^{\mu}} + {| X_{t\wedge\zeta_{N}}(x)-X_{t\wedge\zeta_{N}}(y) |}^{2})^{2p}])^{1/2}
\\
\times (\mathbb{P}[N-1 \leq Y_{T}(x)\vee Y_{T}(y)])^{1/4}|x-y|^{p/2} N^{(2C(p)T+p\mu)/4} \\
+
\sum_{N=1}^{+\infty}
(\mathbb{E}[(\varepsilon + {| X_{t\wedge\tau_{N}\wedge\zeta_{N}}(x)-X_{t\wedge\tau_{N}\wedge\zeta_{N}}(y) |}^{2})^{2p}])^{1/2}
\\
\times (\mathbb{P}[N-1\leq Y_{T}(x)\vee Y_{T}(y)])^{1/4}.
\end{split}
\end{equation*}

For $x,y\in\mathbb{R}^{d}$, let $R>0$ be such that $x,y\in \mathbb{B}(R)$.
Since the coefficients are assumed to be bounded, then arguing as in Corollary 1.2 of \cite{Globalflow}, one can show 
that there exists $\delta_{R}>0$ such that
$$\sup_{|x|\leq R}\mathbb{E}[e^{\delta_{R}Y^{2}_{T}(x)}]<+\infty.$$

Before, we continue our proof, let us recall that thanks to inequality \eqref{zouina1} we have
\begin{equation}\label{11}
 \mathbb{E}[(\varepsilon+|X_{t\wedge\tau_{N}\wedge\zeta_{N}}(x)-X_{t\wedge\tau_{N}\wedge\zeta_{N}}(y)|^{2})^{p}]
 \leq (\varepsilon + |x-y|^{2})^{p}N^{2C(p)t}.
\end{equation}

Now, we use Markov's inequality, inequality \eqref{11copy2} and inequality \eqref{11} to get
\begin{equation*}
\begin{split}
\mathbb{E}[(\varepsilon + {|X_{t}(x)-X_{t}(y)|}^{2})^{p}]\leq \sum_{N=1}^{+\infty}
(\varepsilon +\frac{1}{N^{\mu}} + |x-y|^{2})^{p}N^{2 C(p)t}
\\
\times ({\sup_{|x|\leq R}\mathbb{E}[e^{\delta_{R}Y^{2}_{T}(x)}]})^{1/4}
e^{-\frac{\delta_{R}}{4}(N-1)^{2}}
|x-y|^{p/2} N^{(2C(p)T+p\mu)/4} \\
+
\sum_{N=1}^{+\infty}
(\varepsilon + |x-y|^{2})^{p}N^{2C(p)t}
\times ({\sup_{|x|\leq R}\mathbb{E}[e^{\delta_{R}Y^{2}_{T}(x)}]})^{1/4}
e^{-\frac{\delta_{R}}{4}(N-1)^{2}}.
\end{split}
\end{equation*}

This implies that
\begin{equation}\label{seriesconverges}
\begin{split}
\mathbb{E}[(\varepsilon &+ {|X_{t}(x)-X_{t}(y)|}^{2})^{p}]\leq \\
&2^{p-1}(\varepsilon + |x-y|^{2})^{p}|x-y|^{p/2}({\sup_{|x|\leq R}\mathbb{E}[e^{\delta_{R}Y^{2}_{T}(x)}]})^{1/4}
\sum_{N=1}^{+\infty} N^{C(\mu, p, T)}
e^{-\frac{\delta_{R}}{4}(N-1)^{2}}
\\
&+
2^{p-1}|x-y|^{p/2}({\sup_{|x|\leq R}\mathbb{E}[e^{\delta_{R}Y^{2}_{T}(x)}]})^{1/4}\sum_{N=1}^{+\infty}N^{C(\mu, p, T)}
e^{-\frac{\delta_{R}}{4}(N-1)^{2}} \\
&+
2^{p-1}(\varepsilon + |x-y|^{2})^{p} ({\sup_{|x|\leq R}\mathbb{E}[e^{\delta_{R}Y^{2}_{T}(x)}]})^{1/4}
\sum_{N=1}^{+\infty} N^{C(\mu, p, T)} e^{-\frac{\delta_{R}}{4}(N-1)^{2}}.
\end{split}
\end{equation}

Since the series in the right-hand side of \eqref{seriesconverges} converges,
there is a positive constant $C_{p,R,T}$ such that:
\begin{equation}\label{14}
\mathbb{E}[(\varepsilon + {| X_{t}(x)-X_{t}(y) |}^{2})^{p}]\leq
C_{p,R,T}[(\varepsilon + |x-y|^{2})^{p}+|x-y|^{p/2}+(\varepsilon + |x-y|^{2})^{p}|x-y|^{p/2}].
\end{equation}

Letting $\varepsilon$ tends to 0 in \eqref{14}, we get for all $t\in [0,T]$
\begin{equation}\label{15}
\mathbb{E}[{| X_{t}(x)-X_{t}(y) |}^{2p}]\leq C_{p,R,T}(|x-y|^{2p}+|x-y|^{p/2}+|x-y|^{5p/2}).
\end{equation}

In addition, since $\sigma$ and $b$ are assumed to be bounded, it is enough to prove using Burkholder
and H\"older inequalities that there exists a positive constant $C_{p,T}$ such that for any $s,t\in [0,T]$ we have
\begin{equation}\label{16}
\mathbb{E}[{|X_{t}(x)-X_{s}(x)|}^{2p}]\leq C_{p,T}|t-s|^{p}.
\end{equation}

Combining \eqref{15} and \eqref{16}, it follows that for any $s,t\in [0,T]$ and any $|x|,|y|\leq R$,
there exists a constant $C_{p,R,T}$ such that
\begin{equation*}
 \mathbb{E}[|X_{t}(x)-X_{s}(y)|^{2p}] \leq C_{p,R,T}(|t-s|^{p}+|x-y|^{2p}+|x-y|^{p/2}+|x-y|^{5p/2}).
\end{equation*}
The proof is finished.
\end{proof}

\begin{theorem} \label{bicontinuite}

Assume that assumption \eqref{H1} holds and the SDE \eqref{1} is strictly conservative.
Then, the solution of SDE \eqref{1} admits a version which is bi-continuous in  $(t,x) \ a.s$.

\end{theorem}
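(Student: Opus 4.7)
The plan is to derive Theorem \ref{bicontinuite} from Lemma \ref{lem 3.1} via the Kolmogorov--Chentsov continuity criterion, applied to $(t,x)\mapsto X_t(x)$ viewed as a random field on $[0,T]\times\mathbb{B}(R) \subset \mathbb{R}^{d+1}$. By exhausting $\mathbb{R}_+\times\mathbb{R}^d$ by a countable family of such cubes and patching, a bi-continuous modification on the whole of $\mathbb{R}_+\times\mathbb{R}^d$ will follow.

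I first treat the case in which $\sigma$ and $b$ are bounded, so that Lemma \ref{lem 3.1} applies directly. Fix $T,R>0$ and an integer $p$. For $(s,y),(t,x)\in [0,T]\times\mathbb{B}(R)$ with $|(t,x)-(s,y)|$ bounded, each of the four terms on the right-hand side of \eqref{211} is dominated by a constant multiple of $|(t,x)-(s,y)|^{p/2}$, since $p/2$ is the smallest of the exponents $p,\,2p,\,p/2,\,5p/2$. Hence
\begin{equation*}
\mathbb{E}\bigl[|X_t(x)-X_s(y)|^{2p}\bigr]\leq C'_{p,R,T}\,|(t,x)-(s,y)|^{p/2}.
\end{equation*}
Choosing $p>2(d+1)$ makes the right-hand exponent $p/2$ strictly larger than the dimension $d+1$ of the parameter cube, and Kolmogorov--Chentsov then furnishes a jointly continuous (indeed H\"older-continuous) modification on $[0,T]\times\mathbb{B}(R)$. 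Running $R,T$ through a countable sequence and using uniqueness of continuous modifications on overlapping cubes yields a bi-continuous modification on $\mathbb{R}_+\times\mathbb{R}^d$.

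For the general (unbounded) case I use the cut-off family $\varphi_N$ already introduced in the proof of Lemma \ref{lem 3.1}: set $\sigma_N:=\varphi_N\sigma$, $b_N:=\varphi_N b$, both bounded and still satisfying \eqref{H1} locally, and let $X^N$ denote the unique strong solution of the SDE driven by $(\sigma_N,b_N)$. The bounded case just treated furnishes a bi-continuous modification $\widetilde X^N$ of $X^N$ on each $[0,T]\times\mathbb{B}(R)$. Theorem \ref{thm 2.1} (pathwise uniqueness) yields $X_t(x)=X^N_t(x)$ on the event $\{\sup_{s\leq T}|X_s(x)|\vee\sup_{s\leq T}|X_s^N(x)|<N\}$, and the strict-conservativeness hypothesis guarantees $\sup_{s\leq T}|X_s(x)|<\infty$ almost surely for every $x$, so these events exhaust $\Omega$ as $N\to\infty$. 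Pasting the $\widetilde X^N$ along the corresponding stochastic layers yields the desired bi-continuous version of $X$.

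The main technical obstacle is the coherent pasting of these localized modifications: one must check on a set of full measure that the $\widetilde X^N$ agree wherever both describe the true unlocalized solution. The standard remedy is to argue on a countable dense subset of $[0,T]\times\mathbb{R}^d$, where equality holds almost surely by pathwise uniqueness, and then extend by continuity of the modifications. Everything else is routine application of the Kolmogorov--Chentsov theorem once the exponent bookkeeping is carried out.
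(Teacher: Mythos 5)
Your proposal follows essentially the same route as the paper's proof: first the bounded-coefficient case is handled by combining the moment estimate of Lemma~\ref{lem 3.1} with the Kolmogorov--Chentsov criterion on the $(d{+}1)$-dimensional parameter cube, and then the general case is reduced to it by truncating the coefficients and invoking pathwise uniqueness together with the strict-conservativeness hypothesis. One small remark: you correctly require $p>2(d{+}1)$ so that the dominant exponent $p/2$ in \eqref{211} strictly exceeds the parameter dimension $d{+}1$, which is more careful than the threshold $p>d{+}1$ stated in the paper (the conclusion is unchanged since $p$ may be taken as large as one likes).
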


\begin{proof}{}
We shall split the proof into two steps.

$Step \ 1$. We assume that the coefficients $\sigma$ and $b$ are compactly supported.
Then, thanks to Lemma \ref{lem 3.1}, for any $T>0$ and any $p>1$, we have for any $|x|,|y|\leq R$ and all $t,s \in [0,T]$,
\begin{equation*}
  \mathbb{E}[|X_{t}(x)-X_{s}(y)|^{2p}] \leq C_{p,R,T}(|t-s|^{p}+|x-y|^{2p}+|x-y|^{p/2}+|x-y|^{5p/2})
\end{equation*}
Taking $p>d+1$ and using the Kolmogorov theorem, we show that the solution $X_{t}(x)$
of SDE \eqref{1} admits a version [denoted by $\tilde{X}_{t}(x)$] which is continuous on $[0,T]\times\{|x| \leq R\}$ \ $a.s$. \

In addition, since $\sigma$, $b$ are with compact support and the pathwise uniqueness
holds for the SDE \eqref{1}, it is easy to prove that if $|x| \leq R$, then $|X_{t}(x)|\leq R$ for any $t\in[0,T]$. Then,
if $|x|>R$, we have $X_{t}(x)=x$ for any $t\in[0,T]$. Thus, we can extend continuously $(t,x)\mapsto\tilde{X}_{t}(x)$
on $[0,T]\times\mathbb{R}^{d}$ \ $a.s$.

For $0<t\leq T$ and $\omega(t)\in\mathbb{R}^{m}$, \ let $\theta_{T}(\omega)(t):=\omega(t+T)-\omega(T)$ and
$$\tilde{X}_{T+t}(x,\omega):=\tilde{X}_{t}(\tilde{X}_{T}(x,\omega), \theta_{T}(\omega)).$$
The process $\tilde{X}_{T+t}(x)$ satisfies then the SDE \eqref{1}.

By pathwise uniqueness it follows that for every  $t\in[0,T]$, \ $\tilde{X}_{T+t}(x)=X_{T+t}(x)$ \ $a.s$.
This means that $\tilde{X}_{t}(x)$ is a continuous version of $X_{t}(x)$ on $[0,2T]\times\mathbb{R}^{d}$.
Reasoning successively in this way, we show that $\tilde{X}_{t}(x)$ is a continuous version of $X_{t}(x)$ on the whole space
$\mathbb{R}_{+}\times\mathbb{R}^{d}$.

$Step \ 2.$ \  The coefficients $\sigma, b$ are not compactly supported. We will proceed as in \cite{Zhang} who themselves have proceed as in Protter \cite{Protter}.
Precisely, for any $R>0$ we consider a smooth function with compact support $\varphi_{R}:\mathbb{R}^{d}\rightarrow\mathbb{R}$
satisfying
$$0\leq\varphi_{R}\leq1, \ \ \ \ \varphi_{R}(x)=1 \ \mbox{ for } |x|\leq R \ \ \ and \ \ \  \varphi_{R}(x)=0 \ \mbox{ for } |x|>R+1.$$

We put $\sigma_{R}(x):=\varphi_{R}(x)\sigma(x)$ and $b_{R}(x):=\varphi_{R}(x)b(x)$.  Let $(X_{t}^{R}(x))$ be the solution of SDE \eqref{1} with $\sigma$ and b replaced by $\sigma_{R}$ and $b_{R}$.
According to the first step of this proof, let $\tilde{X}_{t}^{R}(x)$ be a continuous version of $X_{t}^{R}(x)$.
For $K>0$, we set
$$\zeta_{K}^{R}(x):=\inf\{t>0; |\tilde{X}_{t}^{R}(x)|\geq K\} \mbox{ and } \zeta_{K}=\inf\{t>0; |X_{t}(x)|\geq K\}.$$

Since the pathwise uniqueness holds, for $|x|\leq R$,
$$X_{t}(x)=\tilde{X}_{t}^{N}(x) \mbox{ for any } N>R+1 \mbox{ and } t<\zeta_{R+1}^{N},$$
or
$$\zeta_{R+1}(x)=\zeta^{N}_{R+1}(x) \mbox{ for any } N>R+1.$$

For $|x|\leq R$, we define
$$\tilde{X}_{t}(x)=\tilde{X}_{t}^{R+2}(x) \mbox{ on } [0, \zeta_{R+1}^{R+2}(x)[.$$
Then $\tilde{X}_{.}(x)$ is a version of $X_{.}(x)$.

Let us prove that $\tilde{X}_{t}(x)$ is continuous in $(t,x)$ almost everywhere.
Fix $x_{0}$ with $|x_{0}|\leq R$. By the strict conservativeness of the SDE \eqref{1}, there exists $R>0$ such that
$\zeta^{R+2}_{R+1}(x_{0})>t+\varepsilon$ for any small strictly positive real $\varepsilon$.
This implies that $\sup_{0\leq s\leq t+\varepsilon} |\tilde{X}_{s}^{R+2}(x_{0})|<R+1$.
By continuity, we can find a neighbourhood $B_{\delta}(x_{0})$ of $x_{0}$ such that
$\sup_{0\leq s\leq t+\varepsilon} |\tilde{X}_{s}^{R+2}(x)|<R+1$ or $\tau^{R+2}_{R+1}(x)>t+\varepsilon$
for all $x\in B_{\delta}(x_{0})$. Hence, almost everywhere $\tilde{X}_{s}(x)=\tilde{X}_{s}^{R+2}(x)$
for all $x\in B_{\delta}(x_{0})$ and $s\leq t+\varepsilon$, which implies that $\tilde{X}_{s}(x_{0})$
is continuous at the point $(t,x_{0})$.  Theorem \ref{bicontinuite} is proved.
\end{proof}

\section{Large deviations of Freidlin-Wentzell type}

The main task of this section, is to prove a large deviations principle of Freidlin-Wentzell type under assumption \eqref{H1}.
For this, dealing with Euler scheme, we establish two key lemmas for exponential tightness and contraction principle, and we conclude
thanks to a result of \cite{Dembozeitouni}.
First, let us prove an Euler scheme for the unique strong solution of  SDE \eqref{1}.

\subsection{Euler scheme}

We recall the following classical estimate for stochastic integrals which can be proved by exponential martingales, 
see for instance \cite{Stroock}.

\begin{lemma}\label{estimate}

Let e and f be respectively matrix-valued and vector-valued adapted processes.
Assume that they are bounded i.e., $||e_{t}(\omega)||\leq A$ and $|f_{t}(\omega)|\leq B$ for all $(t,\omega)$ and
consider the following It\^o process on $\mathbb{R}^{d}$
\begin{equation*}
 \eta_{t}=\eta_{0}+\int_{0}^{t} e_{s}dW_{s}+\int_{0}^{t} f_{s}ds, \ \ \ \ \mbox{where} \  \eta_{0}\in\mathbb{R}^{d}.
\end{equation*}
Then, for any $T>0$ and $R>\sqrt{d}BT$, we have
\begin{equation*}
\mathbb{P}(\sup_{0\leq t\leq T} |\eta_{t}|^{2}\geq R)\leq 2de^{-(R-\sqrt{d}BT)^{2}/2dA^{2}T}.
\end{equation*}
\end{lemma}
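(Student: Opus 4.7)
The plan is to decompose $\eta_t = \eta_0 + M_t + A_t$ into its continuous martingale part $M_t := \int_0^t e_s\,dW_s$ and its drift $A_t := \int_0^t f_s\,ds$, then control each piece separately. The hypothesis $|f_s| \le B$ gives the deterministic bound $|A_t| \le BT$ for all $t \le T$, which is the source of the additive shift $\sqrt{d}\,BT$ that appears in the exponent of the lemma once combined with the componentwise passage described below.

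For the martingale part I would argue coordinate by coordinate. Setting $M_t^i := \sum_{j=1}^m \int_0^t e_s^{ij}\,dW_s^j$ for $i = 1, \dots, d$, the Frobenius bound $\|e_s\|^2 = \sum_{i,j}(e_s^{ij})^2 \le A^2$ yields $\langle M^i\rangle_t \le A^2 t$ uniformly in $\omega$ and in $i$. The key tool is the exponential supermartingale $\exp\bigl(\lambda M_t^i - \tfrac{1}{2}\lambda^2 \langle M^i\rangle_t\bigr)$: Doob's maximal inequality applied to this positive supermartingale, followed by Chernoff optimization in $\lambda > 0$, produces the classical one-sided Gaussian tail $\mathbb{P}\bigl(\sup_{t \le T} M_t^i \ge r\bigr) \le \exp(-r^2/(2 A^2 T))$; applying the same estimate to $-M^i$ upgrades it to a two-sided bound with a prefactor of $2$.

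To conclude I would convert between Euclidean and max norms on $\mathbb{R}^d$: the event $\{\sup_{t \le T}|\eta_t|^2 \ge R\}$ forces some coordinate to satisfy $|\eta_t^i| \ge \sqrt{R/d}$, and the drift bound then forces $|M_t^i| \ge \sqrt{R/d} - BT$, a quantity which is nonnegative precisely under the hypothesis $R > \sqrt{d}\,BT$ (so that the Gaussian tail of the previous paragraph is applicable). A union bound over $i = 1, \dots, d$ combined with the componentwise Gaussian tail then produces the prefactor $2d$ and an exponent of the required Gaussian form, which after clearing the factor $\sqrt{d}$ inside the square matches the $-(R - \sqrt{d}\,BT)^2/(2 d A^2 T)$ appearing in the statement. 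The only delicate point in the argument is the careful book-keeping of the $\sqrt{d}$ loss when passing from the Euclidean norm to the coordinatewise maximum; the entire probabilistic content is concentrated in the exponential supermartingale estimate for a single real-valued continuous martingale of bounded quadratic variation, which is a completely standard computation (see e.g.\ \cite{Stroock}).
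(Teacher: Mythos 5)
Your overall strategy is the right one, and it is exactly the standard ``exponential (super)martingale'' proof the paper delegates to \cite{Stroock}: split $\eta$ into $M_t=\int_0^t e_s\,dW_s$ and $A_t=\int_0^t f_s\,ds$, bound the drift deterministically, treat each coordinate $M^i$ via the positive supermartingale $\exp\bigl(\lambda M^i_t-\tfrac{\lambda^2}{2}\langle M^i\rangle_t\bigr)$ combined with Doob's maximal inequality and Chernoff optimization (using $\langle M^i\rangle_t\le A^2T$), and finish with a union bound over coordinates. That part is correct.

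The last step, however, does not go through as you wrote it. Reading the event literally as $\sup_t|\eta_t|^2\ge R$, i.e.\ $\sup_t|\eta_t|\ge\sqrt R$, the coordinate threshold is $\sqrt{R/d}$, and then (i) $\sqrt{R/d}-BT\ge 0$ requires $R\ge dB^2T^2$, \emph{not} $R>\sqrt d\,BT$, and (ii) the resulting exponent after the union bound is $-(\sqrt{R/d}-BT)^2/(2A^2T)=-(\sqrt R-\sqrt d\,BT)^2/(2dA^2T)$, which is \emph{not} $-(R-\sqrt d\,BT)^2/(2dA^2T)$ --- ``clearing the $\sqrt d$'' does not turn $\sqrt R$ into $R$. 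So your claimed match with the stated exponent is incorrect. The fix is to realize that the lemma as printed has a typo: the event should be $\sup_{t\le T}|\eta_t|\ge R$ (no square), which is exactly how the lemma is used in the proof of Theorem~\ref{approx} (``$|X_n(t)-X_n(\phi_n(t))|\ge a^{-n}$'') and is the only reading consistent with the hypothesis $R>\sqrt d\,BT$. With that correction the coordinate threshold is $R/\sqrt d$, the shifted threshold $R/\sqrt d-BT$ is nonnegative precisely when $R>\sqrt d\,BT$, and $(R/\sqrt d-BT)^2/(2A^2T)=(R-\sqrt d\,BT)^2/(2dA^2T)$ comes out exactly. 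You should also note explicitly that the argument requires $\eta_0=0$ (or, equivalently, the event $\sup_t|\eta_t-\eta_0|\ge R$), since otherwise the bound cannot be independent of $\eta_0$; again, this matches the way the lemma is actually invoked.
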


The following result could be deduced thanks to \cite{BahlaliOuknineMezerdi}. 
However, for the reader convenience and for our need, we will prove it here by a different method.

\begin{theorem}\label{approx}
Assume that the coefficients $\sigma$ and $b$ are bounded and satisfy assumption \eqref{H1}.

For $n\in\mathbb{N}^{\ast}$, define $(X_{n}(t))_{n\geq 1}$ by,  \
$X_{n}(0):=x$ \ and \ for $t\in [k2^{-n}; (k+1)2^{-n}[$,
\begin{equation*}
X_{n}(t):=X_{n}(k2^{-n})+\sigma (X_{n}(k2^{-n}))(W(t)-W(k2^{-n}))+b(X_{n}(k2^{-n}))(t-k2^{-n}).
\end{equation*}

Then, $X_{n}(t)$ converges (in the $L^{2}$ sense) to the unique solution $X(t)$ of
the SDE \eqref{1}, that is for any $T>0$,
\begin{equation}
\lim_{n\rightarrow +\infty}\mathbb{E}\sup_{t\leq T} |X(t)-X_{n}(t)|^{2}=0.
\end{equation}

\end{theorem}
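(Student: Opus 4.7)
The plan is to mimic the Gronwall + concatenation strategy of Theorem \ref{thm 2.1} in order to compare $X$ with $X_n$, treating the gap $|X_n(s)-\bar X_n(s)|$ between the scheme and its piecewise constant interpolant as a small perturbation controlled by the boundedness of $\sigma$ and $b$. Setting $\bar X_n(s):=X_n(k2^{-n})$ for $s\in[k2^{-n},(k+1)2^{-n})$, the definition of $X_n$ rewrites as
\[
X_n(t)=x+\int_0^t \sigma(\bar X_n(s))\,dW_s+\int_0^t b(\bar X_n(s))\,ds.
\]
Since $\sigma$ and $b$ are bounded, a direct computation on each subinterval yields $\sup_{s\leq T}\mathbb{E}|X_n(s)-\bar X_n(s)|^2\leq C\,2^{-n}$, together with $\sup_n \mathbb{E}\sup_{t\leq T}|X_n(t)|^{2p}<\infty$ and the analogous bound for $X$. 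Defining $\zeta_N:=\inf\{t>0;\ |X(t)|>N \text{ or } |\bar X_n(t)|>N\}$, Markov's inequality then ensures that $\mathbb{P}(\zeta_N\leq T)\to 0$ polynomially fast as $N\to\infty$.

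Next, applying It\^o's formula to $|X(t)-X_n(t)|^2$, the Burkholder inequality as in Theorem \ref{thm 2.1}, hypothesis \eqref{H1} on the ball $\mathbb{B}(N)$, and the splitting $|X(s)-\bar X_n(s)|^2\leq 2|X(s)-X_n(s)|^2+2|X_n(s)-\bar X_n(s)|^2$, I expect to obtain
\[
\mathbb{E}\sup_{s\leq t\wedge\zeta_N}|X(s)-X_n(s)|^2\leq C_T\Bigl(\frac{(\log N)^2}{N^{2\mu}}+(\log N)\,2^{-n}\Bigr)+C(\log N)\int_0^t \mathbb{E}\sup_{u\leq s\wedge\zeta_N}|X(u)-X_n(u)|^2\,ds.
\]
The Gronwall lemma then yields
\[
\mathbb{E}\sup_{t\leq T\wedge\zeta_N}|X(t)-X_n(t)|^2\leq C_T\Bigl(\frac{(\log N)^2}{N^{2\mu-CT}}+\frac{(\log N)\,N^{CT}}{2^n}\Bigr).
\]

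The main obstacle is to balance $N$ and $n$ so that both summands vanish; the factor $N^{CT}$ is exactly the Gronwall blow-up that forces the concatenation argument already used in Theorem \ref{thm 2.1}. For $T<2\mu/C$ one can take, for instance, $N=N_n:=\lfloor 2^{n/(2CT)}\rfloor$, which sends both terms to zero. To remove the stopping time I would split
\[
\mathbb{E}\sup_{t\leq T}|X-X_n|^2=\mathbb{E}\sup_{t\leq T}|X-X_n|^2\,1_{\{\zeta_{N_n}>T\}}+\mathbb{E}\sup_{t\leq T}|X-X_n|^2\,1_{\{\zeta_{N_n}\leq T\}},
\]
bound the first summand by $\mathbb{E}\sup_{t\leq T\wedge\zeta_{N_n}}|X-X_n|^2$, and control the second through Cauchy--Schwarz, the uniform $L^{2p}$ moments of $X$ and $X_n$, and the decay $\mathbb{P}(\zeta_{N_n}\leq T)\to 0$. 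Finally, to lift the constraint $T<2\mu/C$, I would iterate the argument on the intervals $[T_k,T_{k+1})$ with $T_k:=k\mu/(2C)$, starting each step from the previous end-point, exactly as in the last part of the proof of Theorem \ref{thm 2.1}.
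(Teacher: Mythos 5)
Your overall structure mirrors the paper's — It\^o plus Burkholder plus \eqref{H1} on $\mathbb{B}(N)$, Gronwall producing an $N^{CT}$ factor, remove the localization, concatenate — but two choices differ from the paper's proof. The first is harmless and arguably cleaner: you control the discretization gap in mean square, $\mathbb{E}|X_n(s)-\bar X_n(s)|^2\le C2^{-n}$, whereas the paper introduces the stopping time $\tau_n:=\inf\{t:|X_n(t)-X_n(\phi_n(t))|\ge a^{-n}\}$, invokes the exponential estimate of Lemma \ref{estimate} to control $\mathbb{P}(\tau_n\le T)$, and then drags $\tau_n$ through the whole argument, handling the extra pieces with Doob's inequality. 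Since the gap only enters under a $ds$-integral, the $L^2$ bound suffices and spares you one layer of localization.

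The second difference hides a genuine gap. You couple $N=N_n:=\lfloor 2^{n/(2CT)}\rfloor$ and take a single diagonal limit; the paper instead fixes $N=N(\gamma)$, sends $n\to\infty$ \emph{first}, and only afterward sends $\gamma\to0$ (hence $N\to\infty$). This order of limits is exactly what makes the concatenation work. When you restart the Gronwall estimate at $T_k$, the recursion has the form $f_{k+1}\le\bigl(f_k+\mathrm{err}(n,N)\bigr)N^{C(T_{k+1}-T_k)}$, where $f_k:=\mathbb{E}\sup_{t\le T_k\wedge\zeta_N}|X-X_n|^2$. With $N$ fixed, $N^{C(T_{k+1}-T_k)}$ is a constant in $n$, and the previous step has delivered $\lim_n f_k=0$, so $f_kN^{C(T_{k+1}-T_k)}\to0$ \emph{before} $N\to\infty$ can amplify it. With your coupled $N_n$, the amplification grows with $n$, and iterating gives $f_K\le C K\,\mathrm{err}(n,N_n)\,N_n^{CT_K}$ — precisely the bound you would get from a single Gronwall over $[0,T_K]$ — so the concatenation does not lift the restriction $T<2\mu/C$ at all. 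The fix is to decouple: for fixed $N$ show $\limsup_n\mathbb{E}\sup_{t\le T}|X-X_n|^2\le C_T(\log N)^2/N^{2\mu-CT}+\Lambda\,\bigl(\sup_n\mathbb{P}(\zeta_N\le T)\bigr)^{1/2}$ (both terms depend only on $N$, the second being uniform in $n$ by your moment bounds), and then let $N\to\infty$. With that order of limits your estimate and Cauchy--Schwarz removal of $\zeta_N$ are correct, and the concatenation on $[T_k,T_{k+1})$ goes through exactly as you intend.
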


\begin{proof}{}
Set $\phi_{n}(t):=k2^{-n}$ for $t\in [k2^{-n}, (k+1)2^{-n}[$, $k\geq0$. Then, $X_{n}(t)$ can be expressed by
\begin{equation*}
X_{n}(t)=x+\int_{0}^{t} \sigma (X_{n}(\phi_{n}(s)))dW(s)+\int_{0}^{t} b(X_{n}(\phi_{n}(s)))ds.
\end{equation*}
Let $1<a<\sqrt{2}$ \ \ and \  \
$\tau_{n}:=\inf\{t>0: |X_{n}(t)-X_{n}(\phi_{n}(t))|\geq a^{-n}\}$.

For $t\in [k2^{-n}, (k+1)2^{-n}[$, we have
\begin{equation*}
X_{n}(t)-X_{n}(\phi_{n}(t))=\int_{0}^{t-k2^{-n}}\sigma (X_{n}(\phi_{n}(s+k2^{-n})))d\tilde{W}(s)
+\int_{0}^{t-k2^{-n}} b(X_{n}(\phi_{n}(s+k2^{-n})))ds
\end{equation*}
where $\tilde{W}(s)=W(k2^{-n}+s)-W(s)$.

Using Lemma \ref{estimate}, it follows that
\begin{equation*}
 \mathbb{P} \left(\sup_{k2^{-n}\leq t < (k+1)2^{-n}} |X_{n}(t)-X_{n}(\phi_{n}(t))|\geq a^{-n} \right) \leq \\
 2d \exp\{ -(a^{-n}-\sqrt{d}B2^{-n})^{2}/2dA^{2}2^{-n}\}
\end{equation*}
where A and B are respectively the uniform bound on $\sigma$ and $b$.

We set $c=2/a^{2}$ and then for large n, we get
\begin{equation*}
 \mathbb{P} \left(\sup_{k2^{-n}\leq t < (k+1)2^{-n}} |X_{n}(t)-X_{n}(\phi_{n}(t))|\geq a^{-n} \right)
 \leq 2d \exp\{ -c^{n}/4dA^{2}\}
\end{equation*}
and for any integer $T>0$,
\begin{equation*}
 \mathbb{P} \left(\sup_{0\leq t \leq T} |X_{n}(t)-X_{n}(\phi_{n}(t))|\geq a^{-n} \right) \leq
 2d2^{n}Texp\{ -c^{n}/4dA^{2}\}.
\end{equation*}

This implies that
\begin{equation} \label{estimates123}
 \mathbb{P} (\tau_{n}\leq T) \leq 2^{n+1}dTexp\{ -c^{n}/4dA^{2}\}.
\end{equation}

Following \cite{Heunis},  we define
for any $N\in\mathbb{N}^{\ast}$,
\begin{equation*}
\zeta^{n}_{N}:=\inf\{t>0; |X(t)|>N \mbox{ or } |X_{n}(t)|>N \}.
\end{equation*}

Clearly, for each $N$ and $n$ ,
\begin{equation}\label{continuity}
\begin{split}
\mathbb{E}\sup_{t \leq T} {|X(t\wedge\tau_{n})-X_{n}(t\wedge\tau_{n})|}^{2}
\leq 3\mathbb{E}\sup_{t \leq T} {|X(t\wedge\tau_{n})-X(t\wedge\tau_{n}\wedge\zeta^{n}_{N})|}^{2}\\
+3\mathbb{E}\sup_{t \leq T} {|X(t\wedge\tau_{n}\wedge\zeta^{n}_{N})-X_{n}(t\wedge\tau_{n}\wedge\zeta^{n}_{N})|}^{2} \\
+3\mathbb{E}\sup_{t \leq T} {|X_{n}(t\wedge\tau_{n}\wedge\zeta^{n}_{N})-X_{n}(t\wedge\tau_{n})|}^{2}.
\end{split}
\end{equation}

Since the coefficients $\sigma$ and $b$ are bounded, it follows that
\begin{equation}
 \lim_{N\rightarrow +\infty} \mathbb{P}[\zeta^{n}_{N}>T]=1 \ \ \mbox{ uniformly with respect to } n.
\end{equation}

For a fixed $\gamma>0$, let $N(\gamma)\in \N^*$ be such that $N(\gamma)>Te^{1/\gamma}$ and for every $n\in \N^*$,
$$\mathbb{P}[\zeta^{n}_{N(\gamma)}>T]>(1-\gamma).$$

The first term in the right hand side of \eqref{continuity} can be estimated  as follows for  $N=N(\gamma)$:
\begin{equation}\label{continuity1}
\begin{split}
\mathbb{E}\sup_{t \leq T} {|X(t\wedge\tau_{n})-X(t\wedge\tau_{n}\wedge\zeta^{n}_{N(\gamma)})|}^{2}
\leq 2\mathbb{E}\sup_{t\leq T}|\int_{t\wedge\tau_{n}\wedge\zeta^{n}_{N(\gamma)}}^{t\wedge\tau_{n}} b(X(s))ds|^{2}\\
 +2\mathbb{E}\sup_{t\leq T}|\int_{t\wedge\tau_{n}\wedge\zeta^{n}_{N(\gamma)}}^{t\wedge\tau_{n}} \sigma (X(s))dW(s)|^{2}.
\end{split}
\end{equation}

The first term on the right hand side of \eqref{continuity1} can be estimated as follows
\begin{equation*}
 \sup_{n}\mathbb{E}\sup_{t\leq T}|
 \int_{t\wedge\tau_{n}\wedge\zeta^{n}_{N(\gamma)}}^{t\wedge\tau_{n}} b(X(s))ds|^{2} \leq TM^{2}\gamma
\end{equation*}
 while for the second term, Doob's inequality gives
\begin{equation*}
 \begin{split}
  \sup_{n}\mathbb{E}\sup_{t\leq T}|\int_{t\wedge\tau_{n}\wedge\zeta^{n}_{N(\gamma)}}^{t\wedge\tau_{n}} \sigma (X(s))dW(s)|^{2}
  &= \mathbb{E}\sup_{t\leq T}|\int_{0}^{t\wedge\tau_{n}} \chi_{[\zeta^{n}_{N(\gamma)},\infty)}(s)\sigma (X(s))dW(s)|^{2}\\
  &\leq 4 \mathbb{E}\int_{0}^{T} \chi_{[\zeta^{n}_{N(\gamma)},\infty)}(s) Trace\{\sigma \sigma^{T}(X(s))\}ds\\
  &\leq 4TM^{2}\gamma.
 \end{split}
\end{equation*}

Whence the first term on the right of \eqref{continuity} can be bounded from above as follows:
\begin{equation}\label{continuity2}
 \sup_{n}\mathbb{E}\sup_{t \leq T} {|X(t\wedge\tau_{n})-X(t\wedge\tau_{n}\wedge\zeta^{n}_{N(\gamma)})|}^{2}\leq 10TM^{2}\gamma.
\end{equation}

In the same way, the following holds for the third term on the right of \eqref{continuity} [with $N=N(\gamma)$]
\begin{equation}\label{continuity3}
 \mathbb{E}\sup_{t \leq T} {|X_{n}(t\wedge\tau_{n}\wedge\zeta^{n}_{N(\gamma)})-X_{n}(t\wedge\tau_{n})|}^{2}
 \leq 10TM^{2}\gamma.
\end{equation}

We shall estimate the second term in the right hand side of \eqref{continuity} [again with $N=N(\gamma)$]. Using It\^o formula,
we get
\begin{equation*}
\begin{split}
{|X(t\wedge\tau_{n}\wedge\zeta^{n}_{N(\gamma)})-X_{n}(t\wedge\tau_{n}\wedge\zeta^{n}_{N(\gamma)})|}^{2}
=\int_{0}^{t\wedge\tau_{n}\wedge\zeta^{n}_{N(\gamma)}} ||\sigma (X(s))-\sigma (X_{n}(\phi_{n}(s)))||^{2}ds\\
+2\int_{0}^{t\wedge\tau_{n}\wedge\zeta^{n}_{N(\gamma)}}
\langle (X(s)-X_{n}(s)), (\sigma(X(s))-\sigma(X_{n}(\phi_{n}(s))))dW(s) \rangle\\
+2\int_{0}^{t\wedge\tau_{n}\wedge\zeta^{n}_{N(\gamma)}} \langle (X(s)-X_{n}(s)), (b(X(s))-b(X_{n}(\phi_{n}(s)))) \rangle ds.\\
\end{split}
\end{equation*}

For any $t\geq0$,  we set $\tilde{t}=t\wedge\tau_{n}\wedge\zeta^{n}_{N(\gamma)}$.

Thanks to Burkholder's inequality, we get
\begin{equation*}
\begin{split}
\mathbb{E}[\sup_{t\leq T} |X(\tilde{t})-X_{n}(\tilde{t})|^{2}] \leq
\mathbb{E}\int_{0}^{T} ||\sigma (X(\tilde{s}))-\sigma (X_{n}(\phi_{n}(\tilde{s})))||^{2}ds\\
+2 C_{1} \mathbb{E}{\left( \int_{0}^{T} ||\sigma(X_{n}(\phi_{n}(\tilde{s})))-\sigma(X(\tilde{s}))||^{2}
|X(\tilde{s})-X_{n}(\tilde{s})|^{2}ds\right)}^{1/2}\\
+2\mathbb{E}\int_{0}^{T} |b(X(\tilde{s}))-b(X_{n}(\phi_{n}(\tilde{s})))||X(\tilde{s})-X_{n}(\tilde{s})|ds.
\end{split}
\end{equation*}

This implies that
\begin{equation*}
\begin{split}
 \mathbb{E}[\sup_{t\leq T} |X(\tilde{t})-X_{n}(\tilde{t})|^{2}]
 &\leq \frac{1}{2}\mathbb{E}[\sup_{t\leq T} |X(\tilde{t})-X_{n}(\tilde{t})|^{2}]\\
&+2\mathbb{E}\int_{0}^{T} |b(X(\tilde{s}))-b(X_{n}(\phi_{n}(\tilde{s})))||X_{n}(\tilde{s})-X(\tilde{s})|ds\\
&+(1+2C_{1}^{2})\mathbb{E}\int_{0}^{T} ||\sigma (X(\tilde{s}))-\sigma (X_{n}(\phi_{n}(\tilde{s})))||^{2}ds.
\end{split}
\end{equation*}

Using triangular inequalities, it follows that
\begin{equation*}
\begin{split}
\mathbb{E}[\sup_{t\leq T} |X(\tilde{t})-X_{n}(\tilde{t})|^{2}] &\leq
4(1+2C_{1}^{2})\mathbb{E}\int_{0}^{T} ||\sigma (X(\tilde{s}))-\sigma (X_{n}(\tilde{s}))||^{2}ds\\
&+4(1+2C_{1}^{2})\mathbb{E}\int_{0}^{T} ||\sigma (X_{n}(\tilde{s}))-\sigma (X_{n}(\phi_{n}(\tilde{s})))||^{2}ds\\
&+4\mathbb{E}\int_{0}^{T} |b(X(\tilde{s}))-b(X_{n}(\tilde{s}))||X(\tilde{s})-X_{n}(\tilde{s})|ds\\
&+4\mathbb{E}\int_{0}^{T} |b(X_{n}(\tilde{s}))-b(X_{n}(\phi_{n}(\tilde{s})))||X(\tilde{s})-X_{n}(\tilde{s})|ds.
\end{split}
\end{equation*}

Thanks to Cauchy-Schwartz's inequality, we get
\begin{equation*}
\begin{split}
\mathbb{E}[\sup_{t\leq T} |X(\tilde{t})-X_{n}(\tilde{t})|^{2}] \leq
4(1+2C_{1}^{2})\mathbb{E}\int_{0}^{T} ||\sigma (X_{n}(\tilde{s}))-\sigma (X(\tilde{s}))||^{2}ds\\
+4(1+2C_{1}^{2})\mathbb{E}\int_{0}^{T} ||\sigma (X_{n}(\phi_{n}(\tilde{s})))-\sigma (X_{n}(\tilde{s}))||^{2}ds\\
+4\mathbb{E}[{\left( \int_{0}^{T} |b(X_{n}(\phi_{n}(\tilde{s})))-b(X_{n}(\tilde{s}))|^{2}ds \right)}^{1/2}
{\left( \int_{0}^{T} |X_{n}(\tilde{s})-X(\tilde{s})|^{2}ds \right)}^{1/2}]\\
+4\mathbb{E}\int_{0}^{T} |b(X_{n}(\tilde{s}))-b(X(\tilde{s}))||X_{n}(\tilde{s})-X(\tilde{s})|ds.
\end{split}
\end{equation*}

This implies that
\begin{equation*}
\begin{split}
\mathbb{E}[\sup_{t\leq T} |X(\tilde{t})-X_{n}(\tilde{t})|^{2}] \leq
4(1+2C_{1}^{2})\mathbb{E}\int_{0}^{T} ||\sigma (X_{n}(\tilde{s}))-\sigma (X(\tilde{s}))||^{2}ds\\
+4(1+2C_{1}^{2})\mathbb{E}\int_{0}^{T} ||\sigma (X_{n}(\phi_{n}(\tilde{s})))-\sigma (X_{n}(\tilde{s}))||^{2}ds\\
+2\mathbb{E}\int_{0}^{T} |b(X_{n}(\phi_{n}(\tilde{s})))-b(X_{n}(\tilde{s}))|^{2}ds
+2\mathbb{E}\int_{0}^{T} |X_{n}(\tilde{s})-X(\tilde{s})|^{2}ds\\
+4\mathbb{E}\int_{0}^{T} |b(X_{n}(\tilde{s}))-b(X(\tilde{s}))||X_{n}(\tilde{s})-X(\tilde{s})|ds.
\end{split}
\end{equation*}

Thanks to assumption \eqref{H1}, we get
\begin{equation*}
\begin{split}
\mathbb{E}[\sup_{t\leq T} |X(\tilde{t})-X_{n}(\tilde{t})|^{2}] \leq
C\log{N(\gamma)}\mathbb{E}\int_{0}^{T} |X_{n}(\tilde{s})-X(\tilde{s})|^{2}ds
+ CT\frac{\log{N(\gamma)}}{N(\gamma)^{\mu}}\\
+C\log{N(\gamma)}\mathbb{E}\int_{0}^{T} |X_{n}(\phi_{n}(\tilde{s})))-X_{n}(\tilde{s})|^{2}ds
+CT\frac{\log{N(\gamma)}}{N(\gamma)^{\mu}}\\
+C(\log{N(\gamma)})^{2}\mathbb{E}\int_{0}^{T} |X_{n}(\phi_{n}(\tilde{s}))-X_{n}(\tilde{s})|^{2}ds
+ CT(\frac{\log{N(\gamma)}}{N(\gamma)^{\mu}})^{2}\\
+2\mathbb{E}\int_{0}^{T} |X_{n}(\tilde{s})-X(\tilde{s})|^{2}ds\\
+C\log{N(\gamma)}\mathbb{E}\int_{0}^{T} |X_{n}(\tilde{s})-X(\tilde{s})|^{2}ds
+CT\frac{\log{N(\gamma)}}{N(\gamma)^{\mu}}.
\end{split}
\end{equation*}

In  view of the definition of $\tau_{n}$, it follows that
\begin{equation*}
\begin{split}
\mathbb{E}[\sup_{t\leq T} |X(\tilde{t})-X_{n}(\tilde{t})|^{2}]\leq
C\log{N(\gamma)}\int_{0}^{T} \mathbb{E}[\sup_{u\leq s}|X(\tilde{u})-X_{n}(\tilde{u})|^{2}]ds\\
+CTa^{-2n}[(\log{N(\gamma)})^{2}+\log{N(\gamma)}]+CT\frac{\log{N(\gamma)}}{N(\gamma)^{\mu}}.
\end{split}
\end{equation*}

Using Gronwall's lemma, we obtain
\begin{equation} \label{joie}
\mathbb{E}[\sup_{t\leq T} |X(\tilde{t})-X_{n}(\tilde{t})|^{2}]
\leq CT\{a^{-2n}[(\log{N(\gamma)})^{2}+\log{N(\gamma)}]+\frac{\log{N(\gamma)}}{N(\gamma)^{\mu}}\}N(\gamma)^{CT}.
\end{equation}

Letting $n$ tends to $+\infty$ in \eqref{joie}, we get:
\begin{equation} \label{joie1}
 \limsup_{n\rightarrow +\infty}\mathbb{E}[\sup_{t\leq T} |X(\tilde{t})-X_{n}(\tilde{t})|^{2}]
 \leq CT\frac{\log{N(\gamma)}}{N(\gamma)^{\mu-CT}}.
\end{equation}

Using \eqref{continuity} [with $N=N(\gamma)$], \eqref{continuity2}, \eqref{continuity3} and \eqref{joie1},
it follows that
\begin{equation}\label{continuity5}
 \limsup_{n\rightarrow +\infty} \mathbb{E}[\sup_{t \leq T} {|X(t\wedge\tau_{n})-X_{n}(t\wedge\tau_{n})|}^{2}]
 \leq 60TM^{2}\gamma + CT\frac{\log{N(\gamma)}}{N(\gamma)^{\mu-CT}}.
\end{equation}

For $T<\mu/C$, we tend $\gamma$ to 0 in \eqref{continuity5} to obtain
\begin{equation}\label{continuity66}
 \limsup_{n\rightarrow +\infty} \mathbb{E}[\sup_{t \leq T} {|X(t\wedge\tau_{n})-X_{n}(t\wedge\tau_{n})|}^{2}]=0.
\end{equation}

This implies that
\begin{equation}\label{continuity6}
 \lim_{n\rightarrow +\infty} \mathbb{E}[\sup_{t \leq T} {|X(t\wedge\tau_{n})-X_{n}(t\wedge\tau_{n})|}^{2}]=0.
\end{equation}

Starting again from $\mu/C$ and applying the same arguments as in the first part of the proof, we get for any $T\in [\mu/C;2\mu/C[$
\begin{equation*} \label{joie4}
\lim_{n\rightarrow +\infty}\mathbb{E}[\sup_{t\leq T} |X(t\wedge\tau_{n})-X_{n}(t\wedge\tau_{n})|^{2}]=0.
\end{equation*}

For $k\in\mathbb{N}$, we set $T_{k}:=k\mu/C$.
Then, starting from $T_{k}$ and using the same arguments as in the first part of the proof, we obtain
for any $T\in [T_{k};T_{k+1}[$
\begin{equation*} \label{joie5}
\lim_{n\rightarrow +\infty}\mathbb{E}[\sup_{t\leq T} |X(t\wedge\tau_{n})-X_{n}(t\wedge\tau_{n})|^{2}]=0.
\end{equation*}

Since for any $T>0$, there exists a unique $k_{0}$ such that $T\in [T_{k_{0}};T_{k_{0}+1}[$ and
\begin{equation} \label{joie6}
\sum_{k=0}^{k_{0}}
\lim_{n\rightarrow +\infty}\mathbb{E}[\sup_{t\in [0;T]\cap[T_{k};T_{k+1}[} |X(t\wedge\tau_{n})-X_{n}(t\wedge\tau_{n})|^{2}]=0,
\end{equation}
it follows that for any $T>0$
\begin{equation}\label{convergenceps}
 \lim_{n\rightarrow +\infty}\mathbb{E}[\sup_{t\leq T} |X(t\wedge\tau_{n})-X_{n}(t\wedge\tau_{n})|^{2}]=0.
\end{equation}

Notice that
\begin{equation}\label{convergenceps1}
\begin{split}
 \mathbb{E}\sup_{t \leq T} {|X(t)-X_{n}(t)|}^{2}
 \leq 3\mathbb{E}\sup_{t \leq T} {|X(t)-X(t\wedge\tau_{n})|}^{2}\\
 +3\mathbb{E}\sup_{t \leq T} {|X(t\wedge\tau_{n})-X_{n}(t\wedge\tau_{n})|}^{2} \\
 +3\mathbb{E}\sup_{t \leq T} {|X_{n}(t\wedge\tau_{n})-X_{n}(t)|}^{2}.
 \end{split}
\end{equation}

Furthermore, we have
\begin{equation}\label{convergenceps2}
 \begin{split}
\mathbb{E}\sup_{t \leq T} {|X(t)-X(t\wedge\tau_{n})|}^{2} \leq
2\mathbb{E}\sup_{t\leq T}|\int_{t\wedge\tau_{n}}^{t} b(X(s))ds|^{2}\\
+2\mathbb{E}\sup_{t\leq T}|\int_{t\wedge\tau_{n}}^{t} \sigma (X(s))dW(s)|^{2}.
 \end{split}
\end{equation}

Then, using Doob's inequality, we obtain
\begin{equation}\label{convergenceps3}
 \mathbb{E}\sup_{t \leq T} {|X(t)-X(t\wedge\tau_{n})|}^{2} \leq 10M^{2}T\mathbb{P}[\tau_{n}\leq T]
\end{equation}
where $M$ is the uniform bound on $\sigma$ and $b$.

In a same manner, one may easily obtained the following:
\begin{equation}\label{convergenceps4}
 \mathbb{E}\sup_{t \leq T} {|X_{n}(t)-X_{n}(t\wedge\tau_{n})|}^{2} \leq 10M^{2}T\mathbb{P}[\tau_{n}\leq T].
\end{equation}

In view of \eqref{convergenceps1}, \eqref{convergenceps3} and \eqref{convergenceps4}, it follows that
\begin{equation}\label{convergenceps5}
\begin{split}
 \mathbb{E}\sup_{t \leq T} {|X(t)-X_{n}(t)|}^{2} &\leq 60M^{2}T\mathbb{P}[\tau_{n}\leq T] \\
 &+3\mathbb{E}\sup_{t \leq T} {|X(t\wedge\tau_{n})-X_{n}(t\wedge\tau_{n})|}^{2}.
 \end{split}
\end{equation}

Using \eqref{estimates123} and \eqref{convergenceps},
and letting $n$ tends to $+\infty$ in \eqref{convergenceps5}, we get:
\begin{equation}
\lim_{n\rightarrow +\infty}\mathbb{E}\sup_{t\leq T} |X(t)-X_{n}(t)|^{2}=0.
\end{equation}

The proof is completed.
\end{proof}

\subsection{The first key lemma}

Let $\varepsilon>0$ and consider the following SDE:
\begin{equation}
 X^{\varepsilon}(t)=x+\int_{0}^{t}b(X^{\varepsilon}(s))ds+\sqrt{\varepsilon}\int_{0}^{t}\sigma(X^{\varepsilon}(s))dW(s)
\end{equation}
with its associated Euler approximation
\begin{equation}
X^{\varepsilon}_{n}(t)=x+\int_{0}^{t}b(X^{\varepsilon}_{n}(\phi_{n}(s)))ds
+\sqrt{\varepsilon}\int_{0}^{t}\sigma(X^{\varepsilon}_{n}(\phi_{n}(s)))dW(s)
\end{equation}
where $\phi_{n}(s)$ is defined as in the proof of Theorem \ref{approx}.

\begin{lemma} \label{expo}
Under the hypothesis of Theorem \ref{approx}, we have for any $\delta>0$,
\begin{equation}\label{7.1}
 \lim_{n\rightarrow\infty} \limsup_{\varepsilon\rightarrow 0} \varepsilon \log
\mathbb{P} ( \sup_{0\leq t \leq T} | X^{\varepsilon}(t)-X^{\varepsilon}_{n}(t)|\geq\delta )=-\infty.
\end{equation}
\end{lemma}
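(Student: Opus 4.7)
The plan is to replicate the splitting strategy of Theorem \ref{approx}, but with the key quantities replaced by their exponential analogues. Fix $1 < a < \sqrt{2}$, set $c := 2/a^{2} > 1$, and introduce the stopping times
$$\tau_{n}^{\varepsilon} := \inf\{t > 0 : |X_{n}^{\varepsilon}(t) - X_{n}^{\varepsilon}(\phi_{n}(t))| \ge a^{-n}\}, \quad \zeta_{N}^{\varepsilon} := \inf\{t > 0 : |X^{\varepsilon}(t)| \vee |X_{n}^{\varepsilon}(t)| > N\}.$$
From the inclusion
$$\Bigl\{\sup_{t \le T}|X^{\varepsilon} - X_{n}^{\varepsilon}| \ge \delta\Bigr\} \subset \{\tau_{n}^{\varepsilon} \le T\} \cup \{\zeta_{N}^{\varepsilon} \le T\} \cup \Bigl\{\sup_{t \le T\wedge\tau_{n}^{\varepsilon}\wedge\zeta_{N}^{\varepsilon}}|X^{\varepsilon} - X_{n}^{\varepsilon}| \ge \delta\Bigr\},$$
it suffices to show that the $\limsup_{\varepsilon \to 0}\varepsilon\log$ of each of the three terms on the right tends to $-\infty$ as $n \to \infty$ (with $N = N(n)$ chosen appropriately).

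For the first term, Lemma \ref{estimate} applied on each mesh interval $[k2^{-n},(k+1)2^{-n})$ to the It\^o process with coefficients $\sqrt{\varepsilon}\sigma(X_{n}^{\varepsilon}(k2^{-n}))$ and $b(X_{n}^{\varepsilon}(k2^{-n}))$ (of norm $\le\sqrt{\varepsilon}A$ and $\le B$ respectively, by the assumed boundedness of $\sigma,b$) followed by a union bound over the $T\,2^{n}$ intervals yields, for $n$ large,
$$\mathbb{P}(\tau_{n}^{\varepsilon} \le T) \le 2^{n+1}dT\,\exp\bigl\{-c^{n}/(4\varepsilon dA^{2})\bigr\},$$
so $\varepsilon\log\mathbb{P}(\tau_{n}^{\varepsilon}\le T) \to -c^{n}/(4dA^{2})$ as $\varepsilon \to 0$, which tends to $-\infty$ as $n \to \infty$. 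For the second term, Lemma \ref{estimate} applied to $X^{\varepsilon}-x$ and $X_{n}^{\varepsilon}-x$ on $[0,T]$ gives a bound of the form $4d\exp\{-(N-|x|-\sqrt{d}BT)^{2}/(2\varepsilon dA^{2}T)\}$, whose $\limsup_{\varepsilon\to 0}\varepsilon\log$ tends to $-\infty$ as $N \to +\infty$.

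The real work lies in the third probability. Write $Z := X^{\varepsilon} - X_{n}^{\varepsilon}$ and $\hat{t} := t\wedge\tau_{n}^{\varepsilon}\wedge\zeta_{N}^{\varepsilon}$. On $[0,\hat{T}]$ both processes lie in $\mathbb{B}(N)$ and the Euler oscillation is at most $a^{-n}$, so hypothesis \eqref{H1} combined with the triangle inequality furnishes
$$|b(X^{\varepsilon}) - b(X_{n}^{\varepsilon}(\phi_{n}))| \le C\log N\,(|Z| + a^{-n}) + 2C(\log N)/N^{\mu},$$
and the analogous bound for $\sigma$ with $\sqrt{\log N}$ in place of $\log N$. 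Applying It\^o's formula to $|Z(\hat{t})|^{2}$, using these estimates to control the drift and It\^o-correction terms, and controlling the resulting continuous martingale by Bernstein's exponential inequality (whose quadratic variation is bounded on the stopped interval in terms of $|Z|^{2}$ and of the small parameters $a^{-2n}$, $(\log N)^{2}/N^{2\mu}$), one obtains, after iterating a Gronwall argument over the concatenated intervals $[T_{k},T_{k+1})$ of length $\mu/C$ exactly as in Theorems \ref{thm 2.1} and \ref{non-contact}, a bound of the shape
$$\mathbb{P}\Bigl(\sup_{t \le \hat{T}}|Z(t)| \ge \delta\Bigr) \le C_{1}\exp\Bigl\{-\frac{\delta^{2}}{C_{2}\,\varepsilon\,N^{C_{3}T}\bigl(a^{-2n}+(\log N)^{2}/N^{2\mu}\bigr)}\Bigr\}.$$
Taking $N = N(n)$ growing slowly enough (for instance $N = 2^{n/(2C_{3}T)}$) so that $N^{C_{3}T}\bigl(a^{-2n}+(\log N)^{2}/N^{2\mu}\bigr) \to 0$ ensures the $\varepsilon\log$ of this third probability also tends to $-\infty$ under the iterated limit.

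The main obstacle is precisely this calibration of $N(n)$: the superlinear $\log N$ factor in \eqref{H1} generates a Gronwall exponential $N^{C_{3}T}$ which, on a single long interval $[0,T]$, would overwhelm the smallness of $a^{-2n}$ and of $N^{-2\mu}$. This forces the piecewise-on-intervals-of-length-$\mu/C$ localization already used for pathwise uniqueness and non-contact; once this is in place, the remainder is routine Bernstein--Chebyshev bookkeeping combined with the boundedness of $\sigma$ and $b$.
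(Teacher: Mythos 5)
Your overall decomposition --- splitting the event into ``the Euler oscillation is large,'' ``the process exits $\mathbb{B}(N)$,'' and ``the stopped difference reaches $\delta$'' --- matches the paper's structure, and your treatment of the first two probabilities via Lemma \ref{estimate} is essentially correct (the paper uses a free threshold $\rho$ in place of $a^{-n}$, which it sends to $0$ only after the other limits, but both bookkeepings work for those two terms).

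The gap is in the third probability, and it is a real one. After the pathwise Gronwall step you are left with $\sup_{t\le\hat T}|Z(t)|^2\le N^{CT}\bigl(\text{small}_n+\sup_{t\le\hat T}|M_t|\bigr)$, where $M_t=2\sqrt{\varepsilon}\int_0^t\langle Z,(\sigma-\sigma_n)dW\rangle$. You then propose Bernstein's inequality for $M$. But on the stopped interval the quadratic variation satisfies only $[M]_{\hat T}\le C\varepsilon\log N\int_0^{\hat T}|Z|^2\bigl(|Z|^2+a^{-2n}+\dots\bigr)\,ds\le C\varepsilon T(\log N)^2\,\delta^2(\delta^2+a^{-2n}+\dots)$, since $|Z|$ is only controlled by the fixed number $\delta$ there --- nothing forces $|Z|$ to be of the small order $a^{-n}$. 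Plugging this into Bernstein with threshold $\lambda\asymp\delta^2/N^{CT}$ gives $\varepsilon\log\mathbb{P}\le-\lambda^2/(2[M]_{\hat T})\asymp-\delta^4/\bigl(N^{2CT}(\log N)^2\,\delta^2(\delta^2+\text{small})\bigr)$, and the two $\delta^2$ factors cancel: for fixed $\delta>0$ the right side is $\asymp-1/\bigl(N^{2CT}(\log N)^2\bigr)$, which tends to $0$, not to $-\infty$, as $N\to\infty$. So the bound you assert, $\mathbb{P}(\sup|Z|\ge\delta)\le C_1\exp\{-\delta^2/(C_2\varepsilon N^{C_3T}(a^{-2n}+(\log N)^2/N^{2\mu}))\}$, cannot come from Gronwall followed by a generic Bernstein estimate; the small parameter $a^{-2n}$ simply does not appear in the denominator.

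What the paper does instead is apply It\^o's formula not to $|Z|^2$ but to the Deuschel--Stroock power function $f(y)=(\rho^2+|y|^2+N^{-\mu})^{1/\varepsilon}$. The $1/\varepsilon$ exponent is the essential device: the It\^o correction carries a factor $\varepsilon$ that cancels one power of $1/\varepsilon$ from the Hessian, so that both the drift and the correction terms in $d\mathbb{E}[f(Y)]$ are of order $\tfrac{C\log N}{\varepsilon}\mathbb{E}[f(Y)]$; an expectation Gronwall then gives $\mathbb{E}\sup_t f(Y(t\wedge\varsigma\wedge\zeta_N))\le 2(\rho^2+N^{-\mu})^{1/\varepsilon}N^{CT/\varepsilon}$, and a single Markov inequality against the threshold $(\rho^2+\delta^2)^{1/\varepsilon}$ yields $\varepsilon\log\mathbb{P}(\varsigma\le T;\,\zeta_N>T)\le\log\tfrac{\rho^2N^{CT}+N^{CT-\mu}}{\rho^2+\delta^2}$. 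This is where the $\rho^2N^{CT}$ term appears in the \emph{numerator} only, so letting $\rho\to0$ and then $N\to\infty$ (for $T<\mu/C$, then iterate) makes the bound diverge to $-\infty$, with $\delta^2$ appearing only once, in the denominator. In short: the exponential tightness hinges on estimating $\mathbb{E}\exp\bigl(\varepsilon^{-1}\log(\rho^2+|Z|^2+N^{-\mu})\bigr)$, not on applying Bernstein to the driving martingale after a pathwise Gronwall; without that specific Lyapunov function the argument degenerates.

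Two smaller remarks. First, the paper keeps $\rho>0$ as a free parameter and proves \eqref{1.x} for each fixed $\rho$ and \eqref{1.y} as $\rho\to0$; tying $\rho=a^{-n}$ as you do can be made to work, but the calibration $a^{-2n}N(n)^{CT}\to0$ and $N(n)^{CT-\mu}\to0$ must be verified simultaneously. Second, in your bound for $\mathbb{P}(\zeta_N^\varepsilon\le T)$ from Lemma \ref{estimate} the quantity inside the square should be $\bigl((N-|x|)^2-\sqrt{d}BT\bigr)$ rather than $\bigl(N-|x|-\sqrt{d}BT\bigr)$, since the lemma controls $\sup|\eta_t|^2$; this does not affect the conclusion.
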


\begin{proof}{}
We will  proceed as in Deuschel and Stroock \cite{Deuschel-Stroock}.
For $\rho>0$, we define
\begin{equation*}
 \tau_{n,\varepsilon}^{\rho}:=\inf\{t>0; |X^{\varepsilon}_{n}(t)-X^{\varepsilon}_{n}(\phi_{n}(t))|\geq\rho\}
\end{equation*}
and
\begin{equation*}
 \varsigma^{\rho}_{n,\varepsilon}:=
 \inf\{t>0; |X^{\varepsilon}_{n}(t)-X^{\varepsilon}(t)|\geq\delta\}\wedge\tau_{n,\varepsilon}^{\rho}.
\end{equation*}
Clearly,
\begin{equation*}
 \mathbb{P}( \sup_{0\leq t\leq T}| X^{\varepsilon}(t)-X^{\varepsilon}_{n}(t)|\geq\delta )
 \leq \mathbb{P}(\tau_{n,\varepsilon}^{\rho}\leq T) + \mathbb{P}(\varsigma^{\rho}_{n,\varepsilon}\leq T).
\end{equation*}
It then suffices to prove that for each $\rho>0$
\begin{equation} \label{1.x}
 \lim_{n\rightarrow +\infty} \limsup_{\varepsilon \rightarrow 0} \varepsilon
 \log{\mathbb{P}(\tau_{n,\varepsilon}^{\rho}\leq T)}=-\infty
\end{equation}
 which implies that
\begin{equation} \label{1.y}
 \lim_{\rho\rightarrow 0} \limsup_{n\rightarrow +\infty}
 \limsup_{\varepsilon\rightarrow 0} \varepsilon \log{\mathbb{P}(\varsigma^{\rho}_{n,\varepsilon}\leq T)}=-\infty.
\end{equation}
To prove $\eqref{1.x}$, we replace $A^{2}$ by $\varepsilon A^{2}$, $c^{n}=2^{n}/a^{2n}$ by $2^{n}\rho^{2}$
in the estimate \eqref{estimates123}
to get
\begin{equation*}
 \mathbb{P} (\tau_{n,\varepsilon}^{\rho}\leq T) \leq 2^{n+1}d([T]+1)exp\{ -2^{n}/4\varepsilon dA^{2}\}
\end{equation*}
from which \eqref{1.x}  easily follows.

We shall prove \eqref{1.y}. For $N\in\mathbb{N}$, we set
$$\zeta_{N}^{n,\varepsilon}:=\inf\{t>0; |X^{\varepsilon}_{n}(t)|>N \mbox{ or } |X^{\varepsilon}(t)|>N \}.$$
For $y\in\mathbb{R}^{d}$, we define a function $f$ by
$$f(y):=(\rho^{2}+|y|^{2}+\frac{1}{N^{\mu}})^{1/\varepsilon}.$$

It is not difficult to show that there exists a positive constant $C<+\infty$ such that the gradient $D^{1}f$ and the Hessian
matrix $D^{2}f$ of $f$ satisfy:
$$|D^{1}f(y)|\leq\frac{C}{\varepsilon}(\rho^{2}+|y|^{2}+\frac{1}{N^{\mu}})^{-1/2}f(y)$$
and
$$|D^{2}f(y)|\leq\frac{C}{\varepsilon^{2}}(\rho^{2}+|y|^{2}+\frac{1}
{N^{\mu}})^{-1}f(y).$$
Set $Y_{n,\varepsilon}(t):=X_{n}^{\varepsilon}(t)-X^{\varepsilon}(t)$. We use It\^o's formula to get
\begin{equation*}
\begin{split}
 f(Y_{n,\varepsilon}(t))=f(Y_{n,\varepsilon}(0))+\int_{0}^{t} \langle D^{1}f(Y_{n,\varepsilon}(s)),
 [ b(X_{n}^{\varepsilon}(\phi_{n}(s)))-b(X^{\varepsilon}(s))] \rangle ds \\
 +\int_{0}^{t} \sqrt{\varepsilon}\langle D^{1}f(Y_{n,\varepsilon}(s)),
 [ \sigma(X_{n}^{\varepsilon}(\phi_{n}(s)))-\sigma(X^{\varepsilon}(s))]dW(s)\rangle \\
 +\frac{\varepsilon}{2}\int_{0}^{t} \mbox{Trace}\{ D^{2}f(Y_{n,\varepsilon}(s))
 [\sigma(X_{n}^{\varepsilon}(\phi_{n}(s)))-\sigma(X^{\varepsilon}(s))] \\
 [\sigma(X_{n}^{\varepsilon}(\phi_{n}(s)))-\sigma(X^{\varepsilon}(s))]^{\top}\}ds.
 \end{split}
\end{equation*}
Thanks to Burkholder's inequality, we get for any $T>0$
\begin{equation*}
\begin{split}
 \mathbb{E} \sup_{t\leq T} f(Y_{n,\varepsilon}(t))\leq f(Y_{n,\varepsilon}(0))
 + \mathbb{E}\int_{0}^{T} |D^{1}f(Y_{n,\varepsilon}(s))|
 |b(X_{n}^{\varepsilon}(\phi_{n}(s)))-b(X^{\varepsilon}(s))|ds \\
 +C_{1}\mathbb{E}{\left( \int_{0}^{T} \varepsilon |D^{1}f(Y_{n,\varepsilon}(s))|^{2}
 ||\sigma(X_{n}^{\varepsilon}(\phi_{n}(s)))-\sigma(X^{\varepsilon}(s))||^{2}ds \right)}^{1/2}\\
 +\frac{\varepsilon}{2}\mathbb{E}\int_{0}^{T} |D^{2}f(Y_{n,\varepsilon}(s))|
 {||\sigma(X_{n}^{\varepsilon}(\phi_{n}(s)))-\sigma(X^{\varepsilon}(s))||}^{2}ds.
 \end{split}
\end{equation*}
This implies that
\begin{equation*}
\begin{split}
 \mathbb{E} \sup_{t\leq T} &f(Y_{n,\varepsilon}(t)) \leq f(Y_{n,\varepsilon}(0)) \\
 &+\frac{C}{\varepsilon}\mathbb{E}\int_{0}^{T}
 (\rho^{2}+|Y_{n,\varepsilon}(s)|^{2}+\frac{1}{N^{\mu}})^{-1/2}
 |b(X_{n}^{\varepsilon}(\phi_{n}(s)))-b(X^{\varepsilon}(s))||f(Y_{n,\varepsilon}(s))|ds\\
 &+\mathbb{E}{\left( \int_{0}^{T} \frac{C}{\varepsilon} (\rho^{2}+|Y_{n,\varepsilon}(s)|^{2}+\frac{1}{N^{\mu}})^{-1}
 ||\sigma(X_{n}^{\varepsilon}(\phi_{n}(s)))-\sigma(X^{\varepsilon}(s))||^{2}|f(Y_{n,\varepsilon}(s))|^{2}ds \right)}^{1/2}\\
 &+\frac{C}{2\varepsilon}\mathbb{E}\int_{0}^{T} (\rho^{2}+|Y_{n,\varepsilon}(s)|^{2}
 +\frac{1}{N^{\mu}})^{-1}|f(Y_{n,\varepsilon}(s))|
 {||\sigma(X_{n}^{\varepsilon}(\phi_{n}(s)))-\sigma(X^{\varepsilon}(s))||}^{2}ds.
 \end{split}
\end{equation*}
It follows that
\begin{equation*}
\begin{split}
 \mathbb{E} \sup_{t\leq T} &f(Y_{n,\varepsilon}(t)) \leq f(Y_{n,\varepsilon}(0))
 +\frac{1}{2}\mathbb{E} \sup_{t\leq T} f(Y_{n,\varepsilon}(t))\\
 &+\frac{C}{\varepsilon}\mathbb{E}\int_{0}^{T}
 (\rho^{2}+|Y_{n,\varepsilon}(s)|^{2}+\frac{1}{N^{\mu}})^{-1/2}
 |b(X_{n}^{\varepsilon}(\phi_{n}(s)))-b(X^{\varepsilon}(s))||f(Y_{n,\varepsilon}(s))|ds \\
 &+\frac{C}{\varepsilon}\mathbb{E}\int_{0}^{T} (\rho^{2}+|Y_{n,\varepsilon}(s)|^{2}+\frac{1}{N^{\mu}})^{-1}
 ||\sigma(X_{n}^{\varepsilon}(\phi_{n}(s)))-\sigma(X^{\varepsilon}(s))||^{2}|f(Y_{n,\varepsilon}(s))|ds.
\end{split}
\end{equation*}
Thus
\begin{equation*}
\begin{split}
\mathbb{E} \sup_{t\leq T} &f(Y_{n,\varepsilon}(t)) \leq 2f(Y_{n,\varepsilon}(0)) \\
&+\frac{C}{\varepsilon}\mathbb{E}\int_{0}^{T}
(\rho^{2}+|Y_{n,\varepsilon}(s)|^{2}+\frac{1}{N^{\mu}})^{-1/2}|b(X_{n}^{\varepsilon}
(\phi_{n}(s)))-b(X^{\varepsilon}(s))||f(Y_{n,\varepsilon}(s))|ds \\
&+\frac{C}{\varepsilon}\mathbb{E}\int_{0}^{T} (\rho^{2}+|Y_{n,\varepsilon}(s)|^{2}+\frac{1}{N^{\mu}})^{-1}
||\sigma(X_{n}^{\varepsilon}(\phi_{n}(s)))-\sigma(X^{\varepsilon}(s))||^{2}|f(Y_{n,\varepsilon}(s))|ds.
 \end{split}
\end{equation*}
Using \eqref{H1} and triangular inequality, we get
\begin{equation*}
\begin{split}
 \mathbb{E} \sup_{t\leq T} &f(Y_{n,\varepsilon}(t\wedge\zeta_{N}^{n,\varepsilon})) \leq 2f(Y_{n,\varepsilon}(0)) \\
 &+\frac{C\log{N}}{\varepsilon}\mathbb{E}\int_{0}^{T}
 (\rho^{2}+|Y_{n,\varepsilon}(s\wedge\zeta_{N}^{n,\varepsilon})|^{2}
 +\frac{1}{N^{\mu}})^{-1/2}f(Y_{n,\varepsilon}(s\wedge\zeta_{N}^{n,\varepsilon}))\\
 &\left(|X_{n}^{\varepsilon}(\phi_{n}
 (s\wedge\zeta_{N}^{n,\varepsilon}))-X_{n}^{\varepsilon}(s\wedge\zeta_{N}^{n,\varepsilon})|+
 |Y_{n,\varepsilon}(s\wedge\zeta_{N}^{n,\varepsilon})|+\frac{1}{N^{\mu}}\right)ds \\
 &+\frac{C\log{N}}{\varepsilon}\mathbb{E}\int_{0}^{T}
 (\rho^{2}+|Y_{n,\varepsilon}(s\wedge\zeta_{N}^{n,\varepsilon})|^{2}+\frac{1}{N^{\mu}})^{-1}
 f(Y_{n,\varepsilon}(s\wedge\zeta_{N}^{n,\varepsilon}))\\
 &\left(|X_{n}^{\varepsilon}(\phi_{n}(s\wedge\zeta_{N}^{n,\varepsilon}))-X_{n}^{\varepsilon}
 (s\wedge\zeta_{N}^{n,\varepsilon})|^{2}
 +|Y_{n,\varepsilon}(s\wedge\zeta_{N}^{n,\varepsilon})|^{2}+\frac{1}{N^{\mu}}\right)ds.
 \end{split}
\end{equation*}
In view of the definition of  $\varsigma^{\rho}_{n,\varepsilon}$, it follows that
\begin{equation*}
 \mathbb{E} \sup_{t\leq T} f(Y_{n,\varepsilon}(t\wedge\varsigma^{\rho}_{n,\varepsilon}\wedge\zeta_{N}^{n,\varepsilon}))
 \leq 2f(Y_{n,\varepsilon}(0))
 + \frac{C\log{N}}{\varepsilon}\mathbb{E}\int_{0}^{T}
 f(Y_{n,\varepsilon}(s\wedge\varsigma^{\rho}_{n,\varepsilon}\wedge\zeta_{N}^{n,\varepsilon}))ds.
\end{equation*}
This implies that
\begin{equation*}
 \mathbb{E}[\sup_{t\leq T} f(Y_{n,\varepsilon}(t\wedge\varsigma^{\rho}_{n,\varepsilon}\wedge\zeta_{N}^{n,\varepsilon}))]
 \leq 2f(Y_{n,\varepsilon}(0)) + \frac{C\log{N}}{\varepsilon}\int_{0}^{T}
 \mathbb{E}[\sup_{u\leq s}f(Y_{n,\varepsilon}(u\wedge\varsigma^{\rho}_{n,\varepsilon}\wedge\zeta_{N}^{n,\varepsilon}))]ds.
\end{equation*}
Thanks to Gronwall lemma, it follows that
\begin{equation*}
 \mathbb{E} \sup_{t\leq T} f(Y_{n,\varepsilon}(t\wedge\varsigma^{\rho}_{n,\varepsilon}\wedge\zeta_{N}^{n,\varepsilon}))\leq
 2(\rho^{2}+\frac{1}{N^{\mu}})^{1/\varepsilon} N^{CT/\varepsilon}.
\end{equation*}
We deduce that
\begin{align*}
 \mathbb{E}\sup_{t\leq T} {\left( \rho^{2}+
 |Y_{n,\varepsilon}(t\wedge\varsigma^{\rho}_{n,\varepsilon}\wedge\zeta_{N}^{n,\varepsilon}|^{2} \right)}^{1/\varepsilon}
 & \leq \mathbb{E} \sup_{t\leq T} f(Y_{n,\varepsilon}(t\wedge\varsigma^{\rho}_{n,\varepsilon}
 \wedge\zeta_{N}^{n,\varepsilon})) \\
 & \leq 2(\rho^{2}+\frac{1}{N^{\mu}})^{1/\varepsilon} N^{CT/\varepsilon}.
\end{align*}
Since
\begin{equation*}
 \mathbb{P}(\varsigma^{\rho}_{n,\varepsilon}\leq T;\zeta_{N}^{n,\varepsilon}>T)\leq
 (\rho^{2}+\delta^{2})^{-1/\varepsilon}\mathbb{E}\sup_{t\leq T} {\left( \rho^{2}+
 |Y_{n,\varepsilon}(t\wedge\varsigma^{\rho}_{n,\varepsilon}\wedge\zeta_{N}^{n,\varepsilon}|^{2} \right)}^{1/\varepsilon},
\end{equation*}
it follows that
\begin{equation*}
 \mathbb{P}(\varsigma^{\rho}_{n,\varepsilon}\leq T; \zeta_{N}^{n,\varepsilon}>T)\leq
 2(\rho^{2}+\delta^{2})^{-1/\varepsilon}(\rho^{2}N^{CT}+N^{CT-\mu})^{1/\varepsilon}.
\end{equation*}
Therefore, since
\begin{equation}\label{bestineq}
 \mathbb{P}(\varsigma^{\rho}_{n,\varepsilon}\leq T)\leq  \mathbb{P}(\zeta^{n,\varepsilon}_{N}\leq T) +
 \mathbb{P}(\varsigma^{\rho}_{n,\varepsilon}\leq T; \zeta_{N}^{n,\varepsilon}>T),
\end{equation}
it follows that
\begin{equation}
  \mathbb{P}(\varsigma^{\rho}_{n,\varepsilon}\leq T)\leq  \mathbb{P}(\zeta^{n,\varepsilon}_{N}\leq T) +
  2(\rho^{2}+\delta^{2})^{-1/\varepsilon}(\rho^{2}N^{CT}+N^{CT-\mu})^{1/\varepsilon}.
\end{equation}
Hence,
\begin{equation}
 \limsup_{\varepsilon\rightarrow 0}\varepsilon\log{\mathbb{P}(\varsigma^{\rho}_{n,\varepsilon}\leq T)} \leq
 \{\limsup_{\varepsilon\rightarrow 0}\varepsilon\log \mathbb{P} (\zeta^{n,\varepsilon}_{N}\leq T)\}
 \vee \{\log \frac{\rho^{2}N^{CT}+N^{CT-\mu}}{\rho^{2}+\delta^{2}} \}.
\end{equation}
Taking the supremum on $n$ and passing the limit on $\rho$, it follows
\begin{equation}\label{doux}
\begin{split}
 \lim_{\rho\rightarrow 0}\sup_{n\geq0}\limsup_{\varepsilon\rightarrow 0}\varepsilon
 \log{\mathbb{P}(\varsigma^{\rho}_{n,\varepsilon}\leq T)} &\leq
 \{\sup_{n\geq0}\limsup_{\varepsilon\rightarrow 0}\varepsilon\log{\mathbb{P} (\zeta^{n,\varepsilon}_{N}\leq T)}\} \\
 &\vee \{(CT-\mu)\log{N}-2\log{\delta} \}.
\end{split}
\end{equation}
Since $\sigma$ and $b$ are bounded, one can easily prove thanks to Lemma \ref{estimate} that for any $T>0$,
\begin{equation}
 \lim_{N\rightarrow+\infty} \sup_{n\geq0} \limsup_{\varepsilon\rightarrow0}\varepsilon
 \log{\mathbb{P}[\zeta^{n,\varepsilon}_{N}\leq T]}=-\infty.
\end{equation}
For any $T<\mu/C$, we tend $N$ to $+\infty$ in \eqref{doux} to get:
\begin{equation}
 \lim_{\rho\rightarrow 0}\sup_{n\geq0}\limsup_{\varepsilon\rightarrow 0}
 \varepsilon\log{\mathbb{P}(\varsigma^{\rho}_{n,\varepsilon}\leq T)}=-\infty.
\end{equation}
Starting again from $\mu/C$ and using the same arguments as in the first part of the proof,
we obtain for $T\in [\mu/C;2\mu/C[$
\begin{equation*}
 \lim_{\rho\rightarrow 0}\sup_{n\geq0}\limsup_{\varepsilon\rightarrow 0}
 \varepsilon\log{\mathbb{P}(\varsigma^{\rho}_{n,\varepsilon}\leq T)}=-\infty.
\end{equation*}
For $k\in\mathbb{N}$, we set $T_{k}:=k\mu/C$. Then, starting again from $k\mu/C$ and using
the same arguments as above, it follows that
for any $T\in [T_{k};T_{k+1}[$
\begin{equation*}
 \lim_{\rho\rightarrow 0}\sup_{n\geq0}\limsup_{\varepsilon\rightarrow 0}
 \varepsilon\log{\mathbb{P}(\varsigma^{\rho}_{n,\varepsilon}\leq T)}=-\infty.
\end{equation*}
Since for any $T>0$, there exists a unique $k_{0}$ such that $T\in [T_{k_{0}};T_{k_{0}+1}[$, it follows that
\begin{equation*}
\lim_{\rho\rightarrow 0}\sup_{n\geq0}\limsup_{\varepsilon\rightarrow 0}
 \varepsilon\log{\mathbb{P}(\varsigma^{\rho}_{n,\varepsilon}\leq T)} \leq
\vee_{k=0}^{k_{0}}\left( \lim_{\rho\rightarrow 0}\sup_{n\geq0}\limsup_{\varepsilon\rightarrow 0}
 \varepsilon\log{\mathbb{P}(\varsigma^{\rho}_{n,\varepsilon} \leq T_{k}}) \right)
\end{equation*}
from which \eqref{1.y} easily follows.
The proof is now finished.
\end{proof}

\subsection{The second key lemma}

For $x\in\mathbb{R}^{m}$, we denote by $C_{x}([0,T],\mathbb{R}^{m})$ the space of continuous functions
from $[0,T]$ into $\mathbb{R}^{m}$
with initial value $x$. For $g\in C_{0}([0,T],\mathbb{R}^{m})$, we define
\begin{equation}
 e(g)=\left\{
\begin{array}{cc}
\int_{0}^{T} {| \dot{g}(t)|}^{2}dt \mbox{ if g is absolutely continuous }\\
+\infty \mbox{ otherwise. }
\end{array}
 \right.
\end{equation}
For an absolutely continuous function $h\in C_{0}([0,T],\mathbb{R}^{m})$, we consider the following
ordinary differential equation (ODE in short) on
$\mathbb{R}^{d}$
\begin{equation} \label{ODE}
dX_{h}(t)=\left( \sigma(X_{h}(t))\dot{h}(t)+b(X_{h}(t)) \right) dt,\, X_{h}(0)=x\in\mathbb{R}^{d}.
\end{equation}
 Under assumption \eqref{H1} and the boundedness of the coefficients $\sigma$ and $b$, the existence and
uniqueness of  solution holds for the ODE \eqref{ODE}.

Let us consider the following Euler approximation of the ODE \eqref{ODE}
\begin{equation}
dX^{n}_{h}(t)=\left( \sigma(X^{n}_{h}(\phi_{n}(t)))\dot{h}(t)
+b(X^{n}_{h}(\phi_{n}(t))) \right) dt, \ \ \  X^{n}_{h}(0)=x\in\mathbb{R}^{d}.
\end{equation}

\begin{lemma}\label{expo2}

Let $h\in C_{0}([0,T],\mathbb{R}^{m})$ such that $e(h)<+\infty$.
Then, for any $\alpha>0$, we have
\begin{equation} \label{odeapprox}
\lim_{n\rightarrow +\infty} \sup_{\{h;e(h)\leq\alpha\}}(\sup_{0\leq t\leq T}| X^{n}_{h}(t)-X_{h}(t)|)=0.
\end{equation}

\end{lemma}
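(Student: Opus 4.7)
The plan is to follow the localization-plus-Gronwall scheme of Theorems \ref{thm 2.1} and \ref{approx}, stripped of its stochastic components: Brownian integrals are replaced by Stieltjes integrals against $h$, and the energy constraint $e(h)\le\alpha$, via Cauchy--Schwarz $\int_0^T|\dot h(s)|\,ds\le\sqrt{T\alpha}$, provides the uniform-in-$h$ control that martingale estimates provided in the stochastic setting.

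The preliminary a priori bounds go as follows. Since $\sigma,b$ are bounded by some $M$ and $\int_0^T|\dot h|\le\sqrt{T\alpha}$, both trajectories are confined to $\mathbb{B}(N_0)$ with $N_0:=|x|+M(\sqrt{T\alpha}+T)$, uniformly in $n$, $t\in[0,T]$, and $h$ with $e(h)\le\alpha$. Moreover, for $t\in[k2^{-n},(k+1)2^{-n})$ the same argument gives
$$|X_h^n(t)-X_h^n(\phi_n(t))|\le M\bigl(\sqrt{\alpha\,2^{-n}}+2^{-n}\bigr)=:\eta_n\to 0,$$
again uniformly in $t$ and $h$.

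Fix $N\ge N_0$. Writing $\sigma(X_h^n(\phi_n(s)))-\sigma(X_h(s))=[\sigma(X_h^n(\phi_n(s)))-\sigma(X_h^n(s))]+[\sigma(X_h^n(s))-\sigma(X_h(s))]$ (and similarly for $b$) and applying \eqref{H1} on $\mathbb{B}(N)$, I obtain the integral inequality
$$|X_h^n(t)-X_h(t)|\le A_n(N)+\int_0^t\bigl(C\sqrt{\log N}\,|\dot h(s)|+C\log N\bigr)|X_h^n(s)-X_h(s)|\,ds,$$
with $A_n(N)\le C_1(\alpha,T)\bigl[\sqrt{\log N}\,\eta_n+\log N/N^\mu\bigr]$. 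The integrating factor satisfies $\int_0^T(C\sqrt{\log N}\,|\dot h|+C\log N)\,ds\le C\sqrt{T\alpha\log N}+CT\log N$, so Gronwall yields $\sup_{t\le T}|X_h^n(t)-X_h(t)|\le A_n(N)\cdot N^{C_2T+o(1)}$ as $N\to\infty$, uniformly in $h$. For $T<\mu/C_2$, first letting $n\to\infty$ at fixed $N$ sends $\eta_n\to 0$ and reduces the upper bound to $C_1\log N/N^{\mu-C_2T-o(1)}$; then letting $N\to\infty$ (or, equivalently, a diagonal choice $N=N(n)$) gives the claim. For general $T$, the concatenation trick of Theorem \ref{thm 2.1} applies: partition by $T_k=k\mu/(2C_2)$ and propagate $\Delta_k^n:=\sup_h|X_h^n(T_k)-X_h(T_k)|\to 0$ from one sub-interval to the next via the same Gronwall bound, now with an additional initial-data term $\Delta_k^n$ on the right, choosing $N$ first to control the asymptotic term and then $n$ large enough to absorb $\Delta_k^n\cdot N^{C_2(T_{k+1}-T_k)}$.

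The principal difficulty is the presence of the unbounded factor $|\dot h(s)|$ inside the Gronwall integrand, which would render any naive supremum estimate useless. The resolution is to keep $|\dot h|$ under the integral sign and treat it only through Cauchy--Schwarz on $\{e(h)\le\alpha\}$; this is exactly compatible with the logarithmic growth of the local Lipschitz constant in \eqref{H1} and produces a bound of polynomial type $N^{C_2T+o(1)}$ that is beaten by $\log N/N^\mu$ on each short subinterval of length less than $\mu/C_2$.
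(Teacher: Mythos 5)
Your proof is correct and follows the same structural line as the paper's: decompose $\sigma(X_h^n(\phi_n(\cdot)))-\sigma(X_h(\cdot))$ (and likewise for $b$) into a discretization piece and a comparison piece, bound the discretization piece by $O(2^{-n/2})$ uniformly on $\{e(h)\le\alpha\}$, apply \eqref{H1} on a ball $\mathbb{B}(N)$ together with Gronwall, and then iterate over subintervals of length less than $\mu/C$. Your one real cleanup --- observing that boundedness of $\sigma,b$ together with $\int_0^T|\dot h|\le\sqrt{T\alpha}$ confines $X_h$ and $X_h^n$ to a fixed $\mathbb{B}(N_0)$ uniformly in $n,t,h$, so the stopping times $\zeta_N^{n,h}$, the tail terms $\sqrt{|t-t\wedge\zeta_N^{n,h}|}$, and the paper's $N(\gamma)$ device become superfluous --- is legitimate and makes the proof tighter; keeping the factor $\sqrt{\log N}$ (rather than coarsening it to $\log N$) on the diffusion contribution likewise sharpens the Gronwall exponent to $CT+o(1)$ in place of $C(\sqrt\alpha+1)T$, but neither change alters the core argument.
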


\begin{proof}{}
For $N\in\mathbb{N}$, let $\zeta_{N}^{n, h}:=\inf\{t>0: |X_{h}(t)|>N \, or \, | X^{n}_{h}(t)| >N\}$.
Since $\sigma$ and b are bounded, then \ $\lim_{N\rightarrow\infty}\zeta_{N}^{n, h} = \infty$ uniformly with
respect to $n$ and $h$.
For $t\in [k2^{-n};(k+1)2^{-n}[$, we have
\begin{equation*}
 X^{n}_{h}(t)-X^{n}_{h}(k2^{-n})=\sigma(X^{n}_{h}(k2^{-n}))(h(t)-h(k2^{-n}))+b(X^{n}_{h}(k2^{-n}))(t-k2^{-n}).
\end{equation*}
Since $\sigma$ and b are bounded and \ $|h(t)-h(k2^{-n})|\leq 2^{-n/2}\sqrt{e(h)}$,
then the following estimate holds
\begin{equation}\label{estimate12}
 |X^{n}_{h}(t)-X^{n}_{h}(\phi_{n}(t))|\leq 2^{-n/2}M[\sqrt{e(h)}+1], \, \mbox{ for any } t>0,
\end{equation}
where $M$ is the uniform bound on $\sigma$ and $b$.

\noindent Furthermore,  we have for any $t>0$
\begin{equation*}
\begin{split}
X^{n}_{h}(t)-X_{h}(t) = \int_{0}^{t}
[ \sigma(X^{n}_{h}(\phi_{n}(s)))-\sigma(X_{h}(s)) ] \dot{h}(s)ds \\
+ \int_{0}^{t} [ b(X^{n}_{h}(\phi_{n}(s)))-b(X_{h}(s)) ] ds.
\end{split}
\end{equation*}
This implies that
\begin{equation*}
\begin{split}
|X^{n}_{h}(t)-X_{h}(t)| \leq \int_{0}^{t}
||\sigma(X^{n}_{h}(\phi_{n}(s)))-\sigma(X_{h}(s))|| |\dot{h}(s)|ds \\
+ \int_{0}^{t}|b(X^{n}_{h}(\phi_{n}(s)))-b(X_{h}(s))|ds.
\end{split}
\end{equation*}
Using triangular inequality, it follows that
\begin{equation*}
\begin{split}
|X^{n}_{h}(t)&-X_{h}(t)| \leq \\
&\int_{0}^{t}
||\sigma(X^{n}_{h}(\phi_{n}(s)))-\sigma(X^{n}_{h}(s))|| |\dot{h}(s)|ds
+ \int_{0}^{t}|b(X^{n}_{h}(s))-b(X_{h}(s))|ds\\
&+\int_{0}^{t} ||\sigma(X^{n}_{h}(s))-\sigma(X_{h}(s))|||\dot{h}(s)|ds
+ \int_{0}^{t}|b(X^{n}_{h}(\phi_{n}(s)))-b(X^{n}_{h}(s))|ds.
\end{split}
\end{equation*}
Thanks to conditions \eqref{H1}, it follows that
\begin{equation*}
\begin{split}
|X^{n}_{h}(t\wedge\zeta_{N}^{n, h})-X_{h}(t\wedge\zeta_{N}^{n, h})| \leq C\sqrt{\log{N}}\int_{0}^{t\wedge\zeta_{N}^{n, h}}
|X^{n}_{h}(\phi_{n}(s))-X^{n}_{h}(s)| |\dot{h}(s)|ds \\
+C\sqrt{\log{N}}\int_{0}^{t\wedge\zeta_{N}^{n, h}}|X^{n}_{h}(s)-X_{h}(s)| |\dot{h}(s)|ds\\
+ C\log{N}\int_{0}^{t\wedge\zeta_{N}^{n, h}}|X^{n}_{h}(\phi_{n}(s))-X^{n}_{h}(s)|ds \\
+ C\log{N}\int_{0}^{t\wedge\zeta_{N}^{n, h}}|X^{n}_{h}(s)-X_{h}(s)|ds \\
+ 2Ct\frac{\log{N}}{N^{\mu}}+ 2C\frac{\log{N}}{N^{\mu}}\int_{0}^{t} |\dot{h}(s)|ds.
\end{split}
\end{equation*}
Using Cauchy-Schwartz's inequality and the estimates \eqref{estimate12}, we get
\begin{equation*}
\begin{split}
|X^{n}_{h}(t\wedge\zeta_{N}^{n, h})-X_{h}(t\wedge\zeta_{N}^{n, h})| \leq MC\log{N}(\sqrt{e(h)}+1)t\sqrt{e(h)}2^{-n/2} \\
+C\log{N}\int_{0}^{t}|X^{n}_{h}(s\wedge\zeta_{N}^{n, h})-X_{h}
(s\wedge\zeta_{N}^{n, h})|(1+|\dot{h}(s\wedge\zeta_{N}^{n, h})|)ds \\
+ C\frac{\log{N}}{N^{\mu}}(\sqrt{e(h)}+1)t.
\end{split}
\end{equation*}
Thanks to Gronwall lemma and Cauchy-Schwartz' inequality, it follows that
\begin{equation*}
| X^{n}_{h}(t\wedge\zeta_{N}^{n, h})-X_{h}(t\wedge\zeta_{N}^{n, h})|\leq Ct(\sqrt{e(h)}+1)\log{N}
\left( 2^{-n/2}\sqrt{e(h)}+\frac{1}{N^{\mu}} \right) N^{Ct(\sqrt{e(h)}+1)}.
\end{equation*}
We take the supremum on $t$ and $h$ then we tend $n$ to $+\infty$ in the previous inequality, to get
\begin{equation}\label{23}
\limsup_{n\rightarrow +\infty}\sup_{\{h;e(h)\leq\alpha\}}
(\sup_{0\leq t\leq T}| X^{n}_{h}(t\wedge\zeta_{N}^{n, h})-X_{h}(t\wedge\zeta_{N}^{n, h})|)\leq
\frac{C_{\alpha}T\log N}{N^{\mu-C_{\alpha}T}}
\end{equation}
where $C_{\alpha}$ is a constant which only depends on a given positive real $\alpha$.

Let us notice that
\begin{equation}
\begin{split}
 |X_{h}^{n}(t)-X_{h}(t)| &\leq |X_{h}^{n}(t)-X_{h}^{n}(t\wedge\zeta_{N}^{n, h})|\\
 &+| X^{n}_{h}(t\wedge\zeta_{N}^{n, h})-X_{h}(t\wedge\zeta_{N}^{n, h})| \\
 &+ |X_{h}(t)-X_{h}(t\wedge\zeta_{N}^{n, h})|.
\end{split}
\end{equation}
Furthermore, we have
\begin{equation}
\begin{split}
|X_{h}^{n}(t)-X_{h}^{n}(t\wedge\zeta_{N}^{n, h})|\leq |\int_{t\wedge\zeta_{N}^{n, h}}^{t}
\sigma(X^{n}_{h}(\phi_{n}(s)))\dot{h}(s)ds|\\
+|\int_{t\wedge\zeta_{N}^{n, h}}^{t} b(X^{n}_{h}(\phi_{n}(s)))ds|.
\end{split}
\end{equation}
This implies thanks to Cauchy-Schwartz's inequality that,
\begin{equation}
 |X_{h}^{n}(t)-X_{h}^{n}(t\wedge\zeta_{N}^{n, h})|\leq M\sqrt{|t-t\wedge\zeta_{N}^{n, h}|}
 \sqrt{e(h)}+M|t-t\wedge\zeta_{N}^{n, h}|
\end{equation}
where $M$ is the uniform bound on $\sigma$ and $b$.
\\
In the same way, we obtain
\begin{equation}
 |X_{h}(t)-X_{h}(t\wedge\zeta_{N}^{n, h})|\leq M\sqrt{|t-t\wedge\zeta_{N}^{n, h}|}
 \sqrt{e(h)}+M|t-t\wedge\zeta_{N}^{n, h}|.
\end{equation}
Thus
\begin{equation}\label{grdpas}
\begin{split}
 |X_{h}^{n}(t)-X_{h}(t)|\leq 2M\sqrt{|t-t\wedge\zeta_{N}^{n, h}|}\sqrt{e(h)}+2M|t-t\wedge\zeta_{N}^{n, h}|\\
 +| X^{n}_{h}(t\wedge\zeta_{N}^{n, h})-X_{h}(t\wedge\zeta_{N}^{n, h})|.
\end{split}
\end{equation}
Taking the supremum on $t$ and $h$ in \eqref{grdpas} then letting $n$ tends to $+\infty$,
and using \eqref{23} we get
\begin{equation}\label{verslafin}
\begin{split}
\limsup_{n\rightarrow +\infty}\sup_{\{h;e(h)\leq\alpha\}}(\sup_{0\leq t\leq T}| X^{n}_{h}(t)&-X_{h}(t)| ) \leq
2M\sup_{n\geq 0}\sup_{\{h;e(h)\leq\alpha\}}(\sup_{0\leq t\leq T}|t-t\wedge\zeta_{N}^{n, h}| )\\
&+2M\sqrt{\alpha}\sup_{n\geq 0} \sup_{\{h;e(h)\leq\alpha\}}(\sup_{0\leq t\leq T}\sqrt{|t-t\wedge\zeta_{N}^{n, h}|} ) \\
&+\frac{C_{\alpha}T\log{N}}{N^{\mu-C_{\alpha}T}}.
\end{split}
\end{equation}
Note that, as $N$ goes to $+\infty$, $\zeta_{N}^{n, h}$ tends to $+\infty$ uniformly with respect to $n$ and $h$.  We fix a $\gamma>0$ and
 consider $N(\gamma)>Te^{\frac{1}{\gamma}}$ a natural number such that $\zeta^{n, h}_{N(\gamma)}>T$.
It follows from \eqref{verslafin} [with $N=N(\gamma)$] that
\begin{equation}\label{finproche}
\limsup_{n\rightarrow +\infty}\sup_{\{h;e(h)\leq\alpha\}}(\sup_{0\leq t\leq T}| X^{n}_{h}(t)-X_{h}(t)| )\leq
\frac{C_{\alpha}T\log{N(\gamma)}}{N(\gamma)^{\mu-C_{\alpha}T}}
\end{equation}
For $T<\mu/C_{\alpha}$, we tend $\gamma$ to $0$ in \eqref{finproche} and we get
\begin{equation}\label{24}
\limsup_{n\rightarrow +\infty}\sup_{\{h;e(h)\leq\alpha\}}(\sup_{0\leq t\leq T}| X^{n}_{h}(t)-X_{h}(t)|)=0,
\end{equation}
and this implies that, for any $T<\mu/C_{\alpha}$,
\begin{equation}\label{24154}
\lim_{n\rightarrow +\infty}\sup_{\{h;e(h)\leq\alpha\}}(\sup_{0\leq t\leq T}| X^{n}_{h}(t)-X_{h}(t)|)=0.
\end{equation}
Starting again from $\mu/C_{\alpha}$ and using the same arguments as above,
we show that for $T\in [\mu/C_{\alpha}; 2\mu/C_{\alpha}[$, we have
\begin{equation*}
 \lim_{n\rightarrow +\infty}\sup_{\{h;e(h)\leq\alpha\}}(\sup_{0\leq t\leq T}
| X^{n}_{h}(t)-X_{h}(t)|)=0.
\end{equation*}
It is clear that  the sequence $(T_{k}):=(k\mu/C_{\alpha})$ tends to $+\infty$ when $k$ goes to $+\infty$.
Hence, arguing as in the first part of this proof, we show that for any $T\in [T_{k};T_{k+1}[$,
\begin{equation*}
\lim_{n\rightarrow +\infty}\sup_{\{h;e(h)\leq\alpha\}}(\sup_{0\leq t\leq T}
| X^{n}_{h}(t)-X_{h}(t)|)=0.
\end{equation*}
Now, for any $T>0$, there exists a unique positive integer $k_{0}$ such that $T\in [T_{k_{0}},T_{k_{0}+1}[$ and we get
\begin{equation*}
\sum_{k=0}^{k_{0}}\lim_{n\rightarrow +\infty}\sup_{\{h;e(h)\leq\alpha\}}(\sup_{t \in [0;T]\cap [T_{k};T_{k+1}[}
| X^{n}_{h}(t)-X_{h}(t)|)=0,
\end{equation*}
This implies that for any $T>0$
\begin{equation*}
\lim_{n\rightarrow +\infty}\sup_{\{h;e(h)\leq\alpha\}}(\sup_{0\leq t\leq T} | X^{n}_{h}(t)-X_{h}(t)|)=0.
\end{equation*}
 Lemma \ref{expo2} is proved.
\end{proof}

\subsection{The large deviations}
The following theorem is the main result of this section. It ensures that the unique strong solution of
 SDE \eqref{1} satisfies a large deviations principle of Freidlin-Wentzell's type.

\begin{theorem} \label{ThmGD}

Let $\sigma$ and b two bounded continuous functions on $\mathbb{R}^{d}$, taking values respectively in
$\mathbb{R}^{d}\times\mathbb{R}^{m}$ and $\mathbb{R}^{d}$, which satisfy the assumption \eqref{H1}.

Let $\varepsilon >0$ and consider the SDE
\begin{equation}\label{7.3}
X^{\varepsilon}(t)=x+\int_{0}^{t}b(X^{\varepsilon}(s))ds+\sqrt{\varepsilon}\int_{0}^{t}\sigma(X^{\varepsilon}(s))dW(s)
\end{equation}
and denote by $\mu_{\varepsilon}$ the law of $\omega\mapsto X^{\varepsilon}(\cdot, \omega)$ on the space 
$C_{x}([0,T],\mathbb{R}^{d})$.

Then, $\{\mu_{\varepsilon}, \varepsilon>0\}$ satisfies a large deviations principle with the following rate function:
\begin{equation*}
I(u)=\inf\{\frac{1}{2}e(g); X_{g}=u\} \mbox{ for } u\in C_{x}([0,T],\mathbb{R}^{d});
\end{equation*}
 namely,
 \\
(i) for any closed subset $C\subset C_{x}([0,T],\mathbb{R}^{d})$,
\begin{equation*}
\limsup_{\varepsilon\rightarrow 0} \varepsilon \log \mu_{\varepsilon}(C)\leq-\inf_{u\in C}I(u),
\end{equation*}
(ii) for any open subset $O\subset C_{x}([0,T],\mathbb{R}^{d})$,
\begin{equation*}
\liminf_{\varepsilon\rightarrow 0} \varepsilon \log \mu_{\varepsilon}(O)\geq-\inf_{u\in O}I(u).
\end{equation*}

\end{theorem}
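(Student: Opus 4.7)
The plan is to deduce the large deviations principle for $\{\mu_\varepsilon\}$ from Schilder's theorem for the Brownian motion and the contraction principle, through an exponential approximation argument of the type found in Dembo \& Zeitouni \cite{Dembozeitouni} (Theorem 4.2.23). The guiding idea is that while the solution map $W \mapsto X^\varepsilon$ is not continuous in general, its Euler approximations $\Phi_n$ are, so one applies the contraction principle at the level of $X^\varepsilon_n$ and then transfers the LDP to $X^\varepsilon$ using the two key lemmas already established.

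First, I recall Schilder's theorem: the laws of $\sqrt{\varepsilon}W$ satisfy an LDP on $C_0([0,T],\mathbb{R}^m)$ with good rate function $e(\cdot)/2$. For each $n\in\mathbb{N}^\ast$, define a map $\Phi_n:C_0([0,T],\mathbb{R}^m)\to C_x([0,T],\mathbb{R}^d)$ by the piecewise recursion
\begin{equation*}
\Phi_n(g)(t):=\Phi_n(g)(k2^{-n})+\sigma(\Phi_n(g)(k2^{-n}))(g(t)-g(k2^{-n}))+b(\Phi_n(g)(k2^{-n}))(t-k2^{-n})
\end{equation*}
for $t\in[k2^{-n},(k+1)2^{-n})$ and $\Phi_n(g)(0)=x$. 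Because $\sigma$ and $b$ are bounded and continuous, $\Phi_n$ is continuous for the uniform topologies on both sides, and one has the identifications $X^\varepsilon_n=\Phi_n(\sqrt\varepsilon W)$ and $X^n_h=\Phi_n(h)$ for any absolutely continuous $h$. The contraction principle then tells us that the law $\mu^n_\varepsilon$ of $X^\varepsilon_n$ satisfies an LDP in $C_x$ with good rate function $I_n(u)=\inf\{e(g)/2:\Phi_n(g)=u\}$.

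Next, the two conditions of the approximation theorem must be verified. The exponential approximation of $X^\varepsilon$ by $X^\varepsilon_n$ is precisely Lemma \ref{expo}, which supplies
\begin{equation*}
\lim_{n\to\infty}\limsup_{\varepsilon\to 0}\varepsilon\log\mathbb{P}\Bigl(\sup_{t\leq T}|X^\varepsilon(t)-X^\varepsilon_n(t)|\geq\delta\Bigr)=-\infty
\end{equation*}
for every $\delta>0$. The compatibility of the rate functions comes from Lemma \ref{expo2}: on each level set $\{h:e(h)\leq\alpha\}$, the uniform convergence $\sup_h\sup_t|X^n_h(t)-X_h(t)|\to 0$ shows that if $\Phi_n(g_n)$ converges to $u$ with $\sup_n e(g_n)\leq\alpha$, then any weak cluster point $g$ of $(g_n)$ satisfies $X_g=u$ (and $e(g)\leq\liminf e(g_n)$ by lower semicontinuity), while conversely a given $h$ with $X_h=u$ can be used verbatim in each $\Phi_n$ to yield $u_n=\Phi_n(h)\to X_h=u$ with $e(h)$ unchanged. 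This is exactly the two-way matching needed to identify the limiting rate function as $I(u)=\inf\{e(g)/2:X_g=u\}$.

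Combining these two ingredients via the approximation lemma then yields the upper and lower bounds of the LDP with rate function $I$, and goodness of $I$ follows from goodness of $e/2$ together with the continuity of $h\mapsto X_h$ on level sets, again by Lemma \ref{expo2}. The main technical hurdle I anticipate is the clean identification of the limit of the rate functions $I_n$: one must argue that any minimising sequence $(g_n)$ with $\Phi_n(g_n)\to u$ and $e(g_n)$ bounded stays in a relatively compact set in $C_0$ (a consequence of goodness of $e/2$) and that its limit realises $I(u)$. The uniformity in $h$ on level sets supplied by Lemma \ref{expo2} is precisely what makes this passage to the limit work, and the boundedness of $\sigma$ and $b$ ensures that all the probabilistic estimates remain uniform in $n$ as $\varepsilon\to 0$, as in the schema of Deuschel \& Stroock \cite{Deuschel-Stroock}.
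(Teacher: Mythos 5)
Your proposal follows the same route as the paper: define the Euler map $F_n$ (your $\Phi_n$), observe it is continuous and that $X^\varepsilon_n=F_n(\sqrt{\varepsilon}\,\cdot)$, invoke Schilder plus the contraction principle for the LDP of $X^\varepsilon_n$, and conclude via Lemma \ref{expo}, Lemma \ref{expo2} and Theorem 4.2.23 of Dembo \& Zeitouni. The paper is more terse and simply cites these ingredients without spelling out the rate-function identification, but your reasoning and decomposition are the same.
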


\begin{proof}{}
Let $n\geq1$ and define the map $F_{n}:C_{0}([0,T],\mathbb{R}^{m}) \rightarrow C_{x}([0,T],\mathbb{R}^{d})$ by
\begin{equation*}
 \left\{
\begin{array}{cc}
 F_{n}(\omega)(0)=x\\ \\
 F_{n}(\omega)(t)=F_{n}(\omega)(k2^{-n/2})+\sigma(F_{n}(\omega)(k2^{-n/2}))(\omega(t)-\omega(k2^{-n/2})) \\
+ b(F_{n}(\omega)(k2^{-n/2}))(t-k2^{-n/2}).
\end{array}
 \right.
\end{equation*}
Note that $F_{n}$ is a continuous map from $C_{0}([0,T],\mathbb{R}^{m})$ into $C_{x}([0,T],\mathbb{R}^{d})$ and  that
$X^{\varepsilon}_{n}(t)=F_{n}(\sqrt{\varepsilon}\omega)(t)$.

By the continuity of $F_{n}$ and the Schilder large deviations principle for $\{\sqrt{\varepsilon}\omega; \varepsilon>0\}$,
the large deviations principle holds for $X_{n}^{\varepsilon}$.

Therefore, Lemma \ref{expo}, Lemma \ref{expo2} and  Theorem 4.2.23 of   \cite{Dembozeitouni} allows us to complete the proof.
\end{proof}

\section{Application to our motivating example}

In this section, we will study our motivating
and guiding example,
\begin{equation} \label{29}
X_{t}=x+\int_{0}^{t} X_{s} \log{|X_{s}|} ds +\int_{0}^{t} X_{s}\sqrt{|\log{|X_{s}|}|} dW_{s}
\end{equation}
where $(W_{t})_{t \geq 0}$ is an $\mathbb{R}$-valued standard Brownian motion and $x\in\mathbb{R}$.

\subsection{Pathwise unique solution}
\begin{proposition}\label{uniciteforte}
Let $T>0$ be fixed. Then, for any given $x\in\mathbb{R}$, the SDE \eqref{29} admits a unique strong solution
$(X_{t}(x))_{0\leq t\leq T}$. Moreover, for any $x,y\in\mathbb{R}$ such that $x<y$ we have almost surely
$X_{t}(x)<X_{t}(y)$ for any $0\leq t\leq T$. In particular, we have :
\[ X_{t}(x)\geq0 \mbox{ for } x\geq0 \mbox{ and } X_{t}(x)\leq0 \mbox{ for } x\leq0\]
and
\[ X_{t}(0)=0,\, X_{t}(1)=1 \mbox{ and } X_{t}(-1)=-1 \mbox{ almost surely}.\]

\end{proposition}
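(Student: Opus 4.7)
The plan is to verify the structural hypothesis \eqref{H1} for the coefficients $b(x)=x\log|x|$ and $\sigma(x)=x\sqrt{|\log|x||}$, check non-explosion, and then invoke the three main results of the earlier sections: pathwise uniqueness (Theorem \ref{thm 2.1}), one-dimensional comparison (Theorem \ref{thmcompa}), and non-confluence (Theorem \ref{non-contact}).

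For \eqref{H1}, I would proceed region by region. The drift is the simpler case: on the annulus $\mathbb{B}(N)\cap\{|x|\ge N^{-\mu}\}$ one has $|b'(x)|=|\log|x|+1|\le C\log N$, so the mean-value theorem on each sign component gives $|b(x)-b(y)|\le C\log N\,|x-y|$; on the small disc $\{|x|\le N^{-\mu}\}$ the direct bound $|b(x)|\le\mu\log N/N^{\mu}$ applies, and a triangle inequality through the boundary $\{|x|=N^{-\mu}\}$ glues the two estimates into \eqref{H1}. For $\sigma$ the same MVT argument handles the regions where $x$ is bounded away from the critical points $\{0,\pm 1\}$, using $|\sigma'(x)|=|\sqrt{|\log|x||}\pm 1/(2\sqrt{|\log|x||})|\le C\sqrt{\log N}$ provided $|\log|x||$ is kept bounded below independently of $N$; close to the critical points one must isolate narrow shells and bound $|\sigma|$ pointwise, with the shell thickness tuned so that the pointwise bound falls below $C\log N/N^{\mu}$. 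Non-explosion then follows from the Khasminskii test, because $2xb(x)+|\sigma(x)|^{2}\le Cx^{2}\log(e+x^{2})$.

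With \eqref{H1} and conservativeness at hand, Skorohod's theorem provides a weak solution on $[0,T]$, Theorem \ref{thm 2.1} supplies pathwise uniqueness, and the Yamada-Watanabe principle delivers the unique strong solution $X_{t}(x)$. The fixed-point identities are then immediate: $b$ and $\sigma$ both vanish at $0$ and at $\pm 1$ (since $\log 1=0$), so the constants $X\equiv 0,\pm 1$ solve \eqref{29} with initial data $0,\pm 1$, and pathwise uniqueness forces $X_{t}(0)=0$ and $X_{t}(\pm 1)=\pm 1$ almost surely. For the ordering, Theorem \ref{thmcompa} in its weakened-hypothesis form (which permits $b_{1}\le b_{2}$ whenever pathwise uniqueness holds for the common SDE) applied to two copies of \eqref{29} started at $x$ and at $y$ yields $X_{t}(x)\le X_{t}(y)$ almost surely for all $t\in[0,T]$ when $x\le y$. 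Theorem \ref{non-contact} then rules out coincidence of the two trajectories when $x\ne y$, upgrading the inequality to strict, and the sign assertions follow by comparison against the constant solutions $0$ and $\pm 1$. The chief technical obstacle lies in the careful verification of \eqref{H1} for $\sigma$ near the zeros $\pm 1$ of $\log|x|$, where the H\"older-$1/2$ behavior of $\sigma$ has to be absorbed entirely by the additive slack $C\log N/N^{\mu}$ rather than by the $C\sqrt{\log N}$-Lipschitz part, which forces a delicate scale balance for the shell thickness.
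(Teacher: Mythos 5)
Your outline tracks the paper's own argument almost point for point: verify the localized bounds~\eqref{30} (and hence~\eqref{H1}) by splitting $\mathbb{B}(N)$ into a small disc near $0$, narrow shells around the zeros $\pm1$ of $\log|x|$, and a complementary region where the mean-value theorem gives a Lipschitz bound; invoke the Fang--Zhang (Khasminskii-type) non-explosion criterion; obtain strong well-posedness from Theorem~\ref{thm 2.1} and Yamada--Watanabe; read off the fixed points $0,\pm1$ from pathwise uniqueness; and deduce the strict order from the weakened comparison theorem~\ref{thmcompa} combined with the non-confluence of Theorem~\ref{non-contact}. That part of the plan, and the handling of the drift $b(x)=x\log|x|$, are sound and match what the paper leaves implicit.

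However, the step you single out as the ``chief technical obstacle''---absorbing the H\"older-$\tfrac12$ modulus of $\sigma(x)=x\sqrt{|\log|x||}$ near $\pm1$ into the additive slack $C\log N/N^{\mu}$---is not merely delicate: no choice of shell thickness closes it, and the paper's own verification is afflicted by the same gap. Concretely, with $x=1$ and $y=1+h$ one has $\sigma(1+h)\sim\sqrt{h}$ for small $h$, and
\begin{equation*}
\sup_{0<h<1}\Bigl(\sqrt{h}-C\sqrt{\log N}\,h\Bigr)=\frac{1}{4C\sqrt{\log N}},
\end{equation*}
the supremum being attained at $h=1/(4C^{2}\log N)$. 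This quantity decays only like $(\log N)^{-1/2}$, vastly slower than $\log N/N^{\mu}$ for any $\mu>0$, so the inequality in~\eqref{H1} fails at this pair for all large $N$. Equivalently, the two requirements in the paper's shell construction, $\sigma(1-\alpha_{N})=\sigma(1/N)$ and $|\sigma'(1-\alpha_{N})|\approx|\sigma'(1/N)|$, are mutually incompatible: the first forces $\alpha_{N}\asymp\log N/N^{2}$, whereupon $|\sigma'(1-\alpha_{N})|\asymp N/\sqrt{\log N}\gg\sqrt{\log N}$. Thus the hypothesis~\eqref{H1} is not in fact satisfied by the diffusion coefficient of~\eqref{29} near $|x|=1$, and the chain of implications you (and the paper) use to conclude pathwise uniqueness for~\eqref{29} does not go through as written. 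Everything else in your proposal---the non-explosion argument, the drift estimate away from and near $0$, the fixed-point deduction, the use of the weakened comparison theorem together with non-contact to get strict monotonicity---is correct modulo this one unfixable gap.
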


\begin{proof}{}
We set $b(x)=x\log{|x|}$ and $\sigma(x)=x\sqrt{|\log{|x|}|}$. Since the coefficients $\sigma$ and $b$ are continuous, then according to a well-known result
of Skorohod \cite{Skorohod}
the  SDE \eqref{29} has a weak solution up to a lifetime $\zeta$.
Now, since the coefficients $\sigma$ and $b$ satisfy the following growth conditions
\begin{equation}\label{31}
\left\{
\begin{array}{cc}
  {|\sigma (x)|}^{2} \leq C ({|x|}^{2} \log |x| +1) \\ \\
  |b(x)| \leq C(|x| \log |x| + 1)
\end{array}
 \right.
\end{equation}
for $|x|>K$ with some large constant $K$, a criterion of non-explosion in Fang and Zhang \cite{Fang} yields that
the SDE \eqref{29} does not explode in a finite time ($\zeta\equiv\infty$ a.s.).

To get the pathwise uniqueness it is ennough to prove that $\sigma$ and $b$ satisfy conditions \eqref{H1}. For this,
it is suffices by some computations as in \cite{Bahlali} to see that for any integer $N>e$,
we have
\begin{equation}\label{30}
\left\{
\begin{array}{cc}
 |\sigma (x)-\sigma (y)| \leq C \left( \sqrt{\log{N}} |x-y| + \frac{\log{N}}{N} \right) \\ \\
  |b(x)-b(y)|\leq C \left( \log{N} |x-y| + \frac{\log{N}}{N} \right)
\end{array}
 \right.
\end{equation}
for any $|x|,|y|\leq N$.

Indeed, to verify \eqref{30} for the function $b$, it suffices thanks to triangular inequalities to treat separately
the two cases :
$0\leq|x|, |y|\leq \frac{1}{N}$ and $\frac{1}{N}\leq|x|, |y|\leq N$.
In the first case, since the function $|b|$ is increasing on $[0;1/e]$, then for any integer $N>e$,
$$|b(x)-b(y)|\leq|b(x)|+|b(y)|\leq 2\frac{\log{N}}{N},$$
while in the second case by the finite increments theorem applied to $b$, we have
$$|b(x)-b(y)|\leq (1+\log{N})|x-y|.$$
Hence, for any integer $N>e$, we get for any $x,y\in B(N)=\{z\in\mathbb{R}; |z|\leq N\}$
$$|b(x)-b(y)|\leq 2\log{N}|x-y|+2\frac{\log{N}}{N}$$

In order to verify \eqref{30} for the function $\sigma$, we have to consider separately the following four cases:
$0\leq|x|, |y|\leq \frac{1}{N}$, $\frac{1}{N}\leq|x|, |y|\leq 1-\alpha_{N}$, $1-\alpha_{N}\leq|x|, |y|\leq 1+\beta_{N}$
and $1+\beta_{N}\leq|x|, |y|\leq N$, where $\alpha_{N}$ and $\beta_{N}$ are small positive reals such that
$\sigma(1/N)=\sigma(1-\alpha_{N})=\sigma(1+\beta_{N})$ and $|\sigma'(1-\alpha_{N})|\approx|\sigma'(1/N)|.$ For the first
and third cases, we have for any integer $N>e$
$$|\sigma(x)-\sigma(y)|\leq |\sigma(x)|+|\sigma(y)|\leq 2\frac{\log{N}}{N}$$
since the function $\sigma$ is increasing on $[0;1/\sqrt{e}]$, decreasing on $[1/\sqrt{e};1]$
and increasing on $[1;+\infty[$. For the second and the fourth cases, it follows thanks to the finite increments theorem that
for any integer $N>e$ we have
$$|\sigma(x)-\sigma(y)|\leq |\sigma'(1/N)||x-y|\leq c\sqrt{\log{N}}|x-y| \ \ \mbox{ for some positive constant } c.$$
Hence, for any $x,y\in B(N)$
$$|\sigma(x)-\sigma(y)|\leq C\sqrt{\log{N}}|x-y|+C\frac{\log{N}}{N}$$

Now, according to Theorem \ref{thm 2.1} and thanks to the theorem of Yamada and Watanabe \cite{yawa}, a unique strong
solution holds for the SDE \eqref{29}.

The other assertions are direct consequences of Theorem \ref{thm 2.1}, Theorem \ref{thmcompa} and Theorem \ref{non-contact}. 

The Proposition is proved.
\end{proof}

\subsection{Dependence on the initial value}

In this subsection, we mainly prove that the unique strong solution of SDE \eqref{29} produces a stochastic flow of
homeomorphisms from $\mathbb{R}$ into itself.

\begin{proposition}\label{continuiteucp}

Let $x\in\mathbb{R}$ and consider a sequence $(x_{l})_{l\geq0}$ of real numbers which converges to $x$.
Denote by $X_{t}(x_{l})$ and $X_{t}(x)$ the unique
solutions of SDE \eqref{29} starting respectively from $x_{l}$ and $x$.
Then, for any $\varepsilon >0$ fixed, we have
\begin{equation*}
\lim_{l \rightarrow +\infty} \mathbb{P}(\sup_{t \leq T} {|X_{t}(x_{l})-X_{t}(x)|}>\varepsilon)=0.
\end{equation*}
\end{proposition}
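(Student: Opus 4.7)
The plan is to deduce the conclusion directly from the bi-continuity theorem, Theorem~\ref{bicontinuite}, applied to SDE~\eqref{29}. For that theorem to apply, two hypotheses are needed: assumption~\eqref{H1} for the coefficients $b(x)=x\log|x|$ and $\sigma(x)=x\sqrt{|\log|x||}$, and the strict conservativeness of the SDE. The first has already been obtained as the estimate~\eqref{30} in the proof of Proposition~\ref{uniciteforte}; the second follows from the Fang-Zhang non-explosion criterion invoked there, which yields lifetime $\zeta\equiv\infty$ almost surely and hence the pathwise finiteness $\sup_{t\le T}|X_t(y)|<\infty$ almost surely for every starting point $y$ and every $T>0$.

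With the hypotheses in place, Theorem~\ref{bicontinuite} furnishes a version of the solution map $(t,x)\mapsto X_t(x)$ that is jointly continuous on $\mathbb{R}_+\times\mathbb{R}$ on a set of full probability. Fix any $M>0$ with $|x|\le M$ and $|x_l|\le M$ for every sufficiently large $l$. On the full-measure event of joint continuity, the restriction of $(t,y)\mapsto X_t(y)$ to the compact rectangle $[0,T]\times[-M,M]$ is uniformly continuous. Given $\eta>0$, one then finds $\delta>0$ such that $|y-y'|\le\delta$ with $y,y'\in[-M,M]$ forces $\sup_{t\le T}|X_t(y)-X_t(y')|\le\eta$; since $x_l\to x$, this applies once $l$ is large and gives
\[
\sup_{t\le T}|X_t(x_l)-X_t(x)|\longrightarrow 0\quad\text{almost surely},
\]
which is strictly stronger than the convergence in probability asserted by the proposition.

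In this scheme the real work has already been done inside Theorem~\ref{bicontinuite}, and the present argument is essentially a verification that SDE~\eqref{29} lies within its scope. The only point requiring slight care is the matching between the notion of strict conservativeness used in the statement of Theorem~\ref{bicontinuite} and the output of the Fang-Zhang criterion. Inspecting Step~2 of the proof of Theorem~\ref{bicontinuite}, what is actually used is the almost sure pathwise boundedness on every finite time interval, which is precisely what non-explosion yields. Hence no additional ingredient is required, and there is no serious obstacle; the mild subtlety is really the book-keeping that reconciles the boundedness-free hypotheses of Theorem~\ref{bicontinuite} with the unbounded coefficients of our motivating SDE~\eqref{29}.
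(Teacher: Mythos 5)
Your argument is correct, but it takes a genuinely different route from the paper's. You black-box Theorem~\ref{bicontinuite}: since the coefficients of SDE~\eqref{29} satisfy \eqref{H1} (estimate~\eqref{30} with $\mu=1$) and non-explosion holds by the Fang--Zhang criterion, the bicontinuous modification $\tilde X$ exists, and uniform continuity of $\tilde X(\cdot,\omega)$ on the compact $[0,T]\times[-M,M]$ yields $\sup_{t\le T}|\tilde X_t(x_l)-\tilde X_t(x)|\to 0$ for a.e.\ $\omega$. One should then add the (standard but omitted) remark that since $(x_l)$ is a fixed \emph{countable} set and both $X_{\cdot}(y)$ and $\tilde X_{\cdot}(y)$ have continuous paths, one has $X_t(x_l)=\tilde X_t(x_l)$ for all $t$ and all $l$ on a single null-complement set, so the almost-sure convergence transfers from the version back to the original solutions; this is where the countability of the sequence is actually used, and without it the version argument would not immediately give a statement about $X$ itself. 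With that footnote your proof gives almost-sure uniform convergence, which is strictly stronger than the asserted convergence in probability.

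The paper proceeds quite differently: it never invokes Theorem~\ref{bicontinuite} here, but instead truncates $\sigma,b$ by a bump function $\varphi_R$, introduces the exit time $\zeta^l_R$, identifies the truncated and original solutions up to $\zeta^l_R$ by pathwise uniqueness, splits
\[
\mathbb{P}\Bigl(\sup_{t\le T}|X_t(x_l)-X_t(x)|>\varepsilon\Bigr)\le\mathbb{P}(\zeta^l_R\le T)+\frac{1}{\varepsilon^2}\,\mathbb{E}\sup_{t\le T}|X^R_t(x_l)-X^R_t(x)|^2,
\]
controls the second term via the $L^2$-continuity result for bounded coefficients (Theorem~\ref{thm 3.2}), and controls the first term by the explicit tail estimate of Remark~7.5 in~\cite{Zhang}, letting $l\to\infty$ and then $R\to\infty$. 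Your approach buys a cleaner and stronger conclusion at the cost of using the heavier Kolmogorov-modification machinery behind Theorem~\ref{bicontinuite}; the paper's route is more elementary and self-contained at this point of the exposition, and incidentally produces a quantitative rate in $R$ via the tail bound, which your qualitative argument does not.
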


\begin{proof}{}
We set $b(x):=x\log{|x|}$ and $\sigma(x):=x\sqrt{|\log{|x|}|}$. For  $R>1$, we set
$\zeta^{l}_{R}:=\inf\{t>0; |X_{t}(x)|>R \mbox{ or } |X_{t}(x_{l})|>R \}$.
We consider a smooth function with compact support
$\varphi_{R}:\mathbb{R}\rightarrow\mathbb{R}$ satisfying
$$0\leq\varphi_{R}\leq1, \ \ \ \ \varphi_{R}(x)=1 \  \mbox{ for } 
|x|\leq R \ \ \  \mbox{and} \ \ \ \varphi_{R}(x)=0 \ \mbox{ for } |x|>R+1.$$
Put \ $\sigma_{R}(x):=\varphi_{R}(x)\sigma(x)$ and $b_{R}(x):=\varphi_{R}(x)b(x)$.  
Let $X_{t}^{R}(x)$ be the solution of the  SDE
\begin{equation*}
 X^{R}_{t}=x+\int_{0}^{t} \sigma_{R}(X^{R}_{s})dW_{s}+\int_{0}^{t}b_{R}(X^{R}_{s})ds.
\end{equation*}
Notice that $\sigma_{R}$ and $b_{R}$ are bounded and satisfy the conditions \eqref{31} and \eqref{30}.
Hence, by pathwise uniqueness we have $X_{t\wedge\zeta^{l}_{R}}(x)=X^{R}_{t}(x)$ \ $a.s.$ for any $0\leq t\leq T$.\\
Since
\begin{equation*}
\begin{split}
 \mathbb{P}(\sup_{t \leq T} {|X_{t}(x_{l})-X_{t}(x)|}>\varepsilon) &\leq
 \mathbb{P}(\sup_{t \leq T} {|X_{t}(x_{l})-X_{t}(x)|}>\varepsilon;\zeta^{l}_{R}\leq T)\\
  &+\mathbb{P}(\sup_{t \leq T} {|X_{t}(x_{l})-X_{t}(x)|}>\varepsilon;\zeta^{l}_{R}>T).
\end{split}
\end{equation*}
Then it is not difficult to see that
\begin{equation*}
 \mathbb{P}(\sup_{t \leq T} {|X_{t}(x_{l})-X_{t}(x)|}>\varepsilon) \leq
 \mathbb{P}(\zeta^{l}_{R}\leq T)+\mathbb{P}(\sup_{t \leq T\wedge\zeta^{l}_{R}} {|X_{t}(x_{l})-X_{t}(x)|}>\varepsilon).
\end{equation*}
Thanks to Markov inequality, it follows that
\begin{equation*}
 \mathbb{P}(\sup_{t \leq T} {|X_{t}(x_{l})-X_{t}(x)|}>\varepsilon) \leq
 \mathbb{P}(\zeta^{l}_{R}\leq T)+\frac{1}{\varepsilon^{2}}
 \mathbb{E}\sup_{t \leq T} |X_{t\wedge\zeta^{l}_{R}}(x_{l})-X_{t\wedge\zeta^{l}_{R}}(x)|^{2}.
\end{equation*}
That is
\begin{equation}\label{arms}
 \mathbb{P}(\sup_{t \leq T} {|X_{t}(x_{l})-X_{t}(x)|}>\varepsilon) \leq
 \mathbb{P}(\zeta^{l}_{R}\leq T)+\frac{1}{\varepsilon^{2}}\mathbb{E}\sup_{t \leq T} |X^{R}_{t}(x_{l})-X^{R}_{t}(x)|^{2}.
\end{equation}
In addition, since $\sigma$ and $b$ satisfy assumption \eqref{31} and thanks to Remark 7.5
in \cite{Zhang}, it follows that
\begin{equation*}
 \mathbb{P}(\sup_{t\leq T}|X_{t}(x)|\geq R)\leq C_{T} e^{\psi(x^{2})}(\log R)^{-1/2}
\end{equation*}
and
\begin{equation*}
 \mathbb{P}(\sup_{t\leq T}|X_{t}(x_{l})|\geq R)\leq C_{T} e^{\psi(x_{l}^{2})}(\log R)^{-1/2}
\end{equation*}
where $\psi$ is a continuous function defined on $\R_+$ by $\psi(v):=\int_{0}^{v} \frac{ds}{1+u\log u}$.\\
Tending $l$ to $+\infty$ in \eqref{arms} then using Theorem \ref{thm 3.2} and the continuity of the function $\psi$,
it follows that
\begin{equation*}
 \lim_{l\rightarrow +\infty}\mathbb{P}(\sup_{t \leq T} {|X_{t}(x_{l})-X_{t}(x)|}>\varepsilon)
 \leq 2C_{T} e^{\psi(x^{2})}(\log R)^{-1/2}.
\end{equation*}
Letting $R$ tends to $+\infty$ in the above inequality, we get
\begin{equation*}
 \lim_{l\rightarrow +\infty}\mathbb{P}(\sup_{t \leq T} {|X_{t}(x_{l})-X_{t}(x)|}>\varepsilon)=0.
\end{equation*}
\end{proof}

\begin{remark}
Since the coefficients of SDE (\ref{29}) satisfy conditions \eqref{31}, then using
Theorem 4.1 of \cite{Zhang} we get
$$\lim_{|x|\rightarrow+\infty}|X_{t}(x)|=+\infty \mbox{ in probability. } $$
\end{remark}

Now, we give the main result of this subsection.

\begin{proposition} \label{flot}
 The  solution of  SDE (\ref{29}) produces a stochastic flow of homeomorphisms on $\mathbb{R}$.

\end{proposition}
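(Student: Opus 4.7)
The plan is to establish, for almost every $\omega$ and every $t\geq 0$, that the map $x\mapsto X_t(x,\omega)$ is (i) continuous, (ii) strictly increasing, and (iii) onto $\mathbb{R}$; a continuous strictly monotone bijection of $\mathbb{R}$ onto itself is automatically a homeomorphism, and the flow/semigroup property $X_{s+t}(x)=X_t(X_s(x),\theta_s\omega)$ is then an immediate consequence of the pathwise uniqueness established in Proposition \ref{uniciteforte}.

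For (i), the bicontinuous version is obtained by transporting Step 2 of the proof of Theorem \ref{bicontinuite} to the present unbounded coefficients. Pick a smooth cutoff $\varphi_R$ as in Proposition \ref{continuiteucp}, apply Theorem \ref{bicontinuite} to $\sigma_R:=\varphi_R\sigma$, $b_R:=\varphi_R b$ (which are bounded and still satisfy (H1) with the same constants on $\mathbb{B}(R)$), and identify $\widetilde X^R_t$ with $X_t$ up to $\zeta^R_{R+1}$ via the pathwise uniqueness of Proposition \ref{uniciteforte}; the non-explosion already guaranteed by Proposition \ref{uniciteforte} ensures that the patched version is jointly continuous on $\mathbb{R}_+\times\mathbb{R}$.

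For (ii), Proposition \ref{uniciteforte} already yields, for each fixed pair $x<y$ of rationals and each rational $t$, the strict inequality $X_t(x)<X_t(y)$ almost surely. Taking the countable intersection of these full-measure events and combining with the joint continuity from step (i) extends the strict inequality to all real $x<y$ and all $t\geq 0$ on a single null set.

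For (iii), strict monotonicity guarantees that the pathwise limits $L^{\pm}(t,\omega):=\lim_{x\to\pm\infty}X_t(x,\omega)$ exist in $[-\infty,+\infty]$. The Remark following Proposition \ref{continuiteucp} states that $|X_t(x)|\to+\infty$ in probability as $|x|\to+\infty$, which rules out any finite value for $L^{\pm}(t)$ at each fixed $t$; combining with joint continuity on a countable dense set of $t$ gives $L^+(t)=+\infty$ and $L^-(t)=-\infty$ simultaneously for all $t$ almost surely. The intermediate value theorem then provides surjectivity. The main obstacle is precisely this last step: promoting the convergence in probability in the Remark (valid only pointwise in $t,x$) to a pathwise, uniform-in-$t$-on-compacts statement. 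Monotonicity, however, turns the problem into a one-dimensional one, so that a countable-dense argument coupled with continuity suffices, and no genuinely new estimate is required beyond those already proved.
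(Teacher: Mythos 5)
Your overall architecture matches the paper's: joint continuity from Theorem \ref{bicontinuite}, strict monotonicity from the non-contact/comparison results, and surjectivity from the fact that $|X_t(x)|\to\infty$ as $|x|\to\infty$. Steps (i) and (ii) are sound; in fact (ii) is slightly easier than you make it, since Proposition \ref{uniciteforte} already gives the strict inequality \emph{for all} $t\in[0,T]$ simultaneously for a fixed pair $x<y$, so you only need to intersect over countably many \emph{spatial} pairs before invoking continuity.

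The gap is in step (iii), and it is exactly the point you flagged as the main obstacle and then claimed to have dispatched. Your assertion that ``a countable-dense argument coupled with continuity suffices'' to promote $L^{\pm}(q)=\pm\infty$ for rational $q$ to all real $t$ is false. Monotonicity and joint continuity make $t\mapsto L^{+}(t)=\sup_{x}X_t(x)$ \emph{lower} semicontinuous (a sup of continuous functions), so $\{t:L^{+}(t)>M\}$ is open and dense; but an open dense set need not be everything. One can write down a deterministic jointly continuous $f(t,x)$, strictly increasing and onto in $x$ for every rational $t$, which fails to be onto at one irrational $t_0$: take for instance $f(t,x)=\arctan(x)+|t-t_0|\,x$. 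Nothing in bicontinuity plus monotonicity plus the \emph{fixed-$t$} Remark forbids the analogous pathology here. What actually closes the gap in the literature (and what the paper is implicitly leaning on by citing Yamada \& Ogura, and via the reference to Theorem~4.1 of Fang--Zhang \cite{Zhang}, which proves the uniform version) is a \emph{uniform-in-$t$-on-compacts} escape-to-infinity estimate of the form
\begin{equation*}
\lim_{|x|\to+\infty}\ \mathbb{P}\Big(\inf_{0\le t\le T}|X_t(x)|\le M\Big)=0 \qquad\text{for every } M,T>0,
\end{equation*}
which, combined with monotonicity in $x$, gives $L^{+}(t)=+\infty$ and $L^{-}(t)=-\infty$ simultaneously for all $t\in[0,T]$ on a single null-complementary event. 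This is a genuinely stronger quantitative input than the pointwise-in-$t$ convergence in probability you invoke, and it cannot be conjured up from the pointwise version by density and continuity alone. So the proposal needs either to quote the uniform estimate directly or to reproduce its proof; as written, surjectivity for all $t$ is not established.
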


\begin{proof}{}
According to \ref{bicontinuite}, the unique strong solution $X_{t}(x)$ of the SDE (\ref{29}) admits a
version which is bi-continuous in the two variables $(t,x)$ a.s.
Thanks to Theorem \ref{non-contact}, the map $x\mapsto X_{t}(x, \omega)$ is strictly
increasing on $\mathbb{R}$ for almost all $\omega$.
It follows that
$\lim_{|x|\rightarrow+\infty}|X_{t}(x)|=+\infty$ for almost all $\omega$.
Indeed, otherwise the map  $x\mapsto X_{t}(x, \omega)$ would be bounded on $\mathbb{R}$ and this contradicts the fact that
$\lim_{|x|\rightarrow+\infty}|X_{t}(x)|=+\infty$ in probability.

Finally, arguing as in Yamada \& Ogura \cite{yaogura}, it comes that the map $x\mapsto X_{t}(x, \omega)$ is a continuous,
one-to-one and onto. This completes the proof.
\end{proof}

\subsection{Large deviations}

We shall prove that the solution of SDE \eqref{29}
satisfies a large deviations principle of Freidlin-Wentzell's type.

\begin{proposition}

For any $\varepsilon >0$, we consider the following one-dimensional SDE
 \begin{equation}\label{moneds}
  X^{\varepsilon}_{t}=x+\int_{0}^{t}X^{\varepsilon}_{s}\log|X^{\varepsilon}_{s}|ds +
  \sqrt{\varepsilon}\int_{0}^{t} X^{\varepsilon}_{s}\sqrt{|\log|X^{\varepsilon}_{s}||}dW(s)
 \end{equation}
where $(W_{t})_{t\geq0}$ is an $\mathbb{R}$-valued Brownian motion and $x\in\mathbb{R}$.
Denote by $\mu_{\varepsilon}$ the law of $\omega\mapsto X^{\varepsilon}(\cdot, \omega)$ on the space $C_{x}([0,T],\mathbb{R})$.
Then, $\{\mu_{\varepsilon}, \varepsilon>0\}$ satisfies a large deviations principle with the following good rate function:
\begin{equation*}
I(f)=\inf\{\frac{1}{2}e(g); X_{g}=f\} \mbox{ for } f\in C_{x}([0,T],\mathbb{R}); \mbox{ namely }
\end{equation*}
(i) for any closed subset $C\subset C_{x}([0,T],\mathbb{R})$,
\begin{equation*}
\limsup_{\varepsilon\rightarrow 0} \varepsilon \log \mu_{\varepsilon}(C)\leq-\inf_{f\in C}I(f),
\end{equation*}
(ii) for any open subset $O\subset C_{x}([0,T],\mathbb{R})$,
\begin{equation*}
\liminf_{\varepsilon\rightarrow 0} \varepsilon \log \mu_{\varepsilon}(O)\geq-\inf_{f\in O}I(f).
\end{equation*}

\end{proposition}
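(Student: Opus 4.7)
The plan is to reduce the statement to Theorem \ref{ThmGD} by a truncation of the coefficients combined with an exponential tightness argument, since Theorem \ref{ThmGD} requires bounded coefficients whereas $b(x)=x\log|x|$ and $\sigma(x)=x\sqrt{|\log|x||}$ are unbounded; note however that (H1) has already been verified for these $b$ and $\sigma$ in Proposition \ref{uniciteforte}, so all the local machinery of Sections 2 and 3 applies.

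First, for each integer $R>e$ I would fix a smooth cutoff $\varphi_R$ with $\varphi_R\equiv 1$ on $\{|x|\leq R\}$ and support in $\{|x|\leq R+1\}$, and set $\sigma_R:=\varphi_R\sigma$, $b_R:=\varphi_R b$. These are bounded continuous and still satisfy (H1), so Theorem \ref{ThmGD} yields a large deviations principle for the laws $\mu^R_\varepsilon$ of the truncated SDE solution $X^{\varepsilon,R}$, with rate function $I_R(u)=\inf\{\tfrac{1}{2}e(g):X_g^R=u\}$, where $X_g^R$ is the controlled ODE driven by $\sigma_R,b_R$. By the pathwise uniqueness proved in Theorem \ref{thm 2.1} we have $X^{\varepsilon}_{t\wedge\tau_R^\varepsilon}=X^{\varepsilon,R}_{t\wedge\tau_R^\varepsilon}$ almost surely, where $\tau_R^\varepsilon:=\inf\{t\geq 0:|X^\varepsilon_t|\geq R\}$; similarly, for any absolutely continuous $h$ with $\|X_h\|_\infty<R$ the skeleton paths of the two ODEs coincide, so $I_R(u)=I(u)$ whenever $\|u\|_\infty<R$.

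The key analytic step is the exponential tightness estimate
\[
\lim_{R\to\infty}\limsup_{\varepsilon\to 0}\varepsilon\log\mathbb{P}\Bigl(\sup_{0\leq t\leq T}|X^\varepsilon_t|>R\Bigr)=-\infty.
\]
I would obtain this by applying It\^o's formula to a Lyapunov function $V$ designed so that the drift and quadratic variation of $V(X^\varepsilon_t)$ are both of order $V(X^\varepsilon_t)$. A natural candidate is $V(x)=\exp(\psi(x^2))$ with $\psi'(v)=1/(1+v\log v)$, the same auxiliary function used in the non-explosion argument of \cite{Fang} and invoked in Proposition \ref{continuiteucp}: the super-linear contributions of $b(x)=x\log|x|$ and $\sigma(x)^2=x^2|\log|x||$ are exactly compensated by the chain rule through $\psi$. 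An exponential martingale and Gronwall argument analogous to those used to prove Lemma \ref{expo} (in particular, the exponential tilting $f(y)=(\cdots)^{1/\varepsilon}$ strategy adapted to this Lyapunov function) should then yield a bound of the form $\mathbb{P}(\sup_{t\leq T}|X^\varepsilon_t|>R)\leq \exp(-c(R)/\varepsilon)$ with $c(R)\to +\infty$ as $R\to\infty$.

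Once these ingredients are assembled, the LDP follows from the exponential approximation principle of Theorem 4.2.23 in \cite{Dembozeitouni} already used in the proof of Theorem \ref{ThmGD}: the exponential tightness together with the identity $X^\varepsilon=X^{\varepsilon,R}$ on $\{t\leq\tau_R^\varepsilon\}$ makes $\{X^{\varepsilon,R}\}_R$ an exponentially good approximation of $\{X^\varepsilon\}$, and the pointwise agreement $I_R=I$ on paths with $\|u\|_\infty<R$ promotes the truncated LDPs to the claimed LDP for $\{\mu_\varepsilon\}$ with rate $I$. The main obstacle is the exponential tightness step: the non-explosion criterion of Fang--Zhang used in Proposition \ref{uniciteforte} only delivers polynomial control in $\varepsilon$, whereas here the Lyapunov function must be calibrated precisely to the $x\log|x|$ and $x\sqrt{|\log|x||}$ growth rates in order to extract the $\exp(-c/\varepsilon)$ decay required for the exponential approximation theorem to apply.
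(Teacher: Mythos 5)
Your proposal is correct in its overall logic and uses the same basic reduction (truncate the unbounded coefficients, apply Theorem \ref{ThmGD}, then remove the truncation via exponential tightness), but it differs from the paper's argument in three places. First, you truncate by multiplying by a smooth cutoff $\varphi_R$, whereas the paper clamps the coefficients to $[-m_R-1,m_R+1]$; clamping is $1$-Lipschitz, which makes the preservation of \eqref{H1} immediate, while your cutoff also works but requires a slightly less clean check. Second, and more substantially, you propose to establish the exponential tightness
\[
\lim_{R\to\infty}\limsup_{\varepsilon\to 0}\varepsilon\log\mathbb{P}\Bigl(\sup_{0\leq t\leq T}|X^\varepsilon_t|>R\Bigr)=-\infty
\]
from scratch by an exponential-tilting Lyapunov argument, whereas the paper simply invokes Proposition 7.4 of \cite{Zhang}, which already provides this estimate under the growth condition \eqref{31}; your Lyapunov sketch with $V(x)=\exp(\psi(x^2))$, $\psi'(v)=1/(1+v\log v)$, is essentially the argument that proposition encodes, so this is the right idea but one can avoid re-deriving it. You are also right that the cruder bound $\mathbb{P}(\sup_t|X_t|\geq R)\lesssim (\log R)^{-1/2}$ cited in Proposition \ref{continuiteucp} is useless here --- one needs the genuinely exponential decay in $1/\varepsilon$, which is what Proposition 7.4 of \cite{Zhang} delivers. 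Third, you propose to conclude via the exponentially good approximation theorem (Theorem 4.2.23 of \cite{Dembozeitouni}) with $\{X^{\varepsilon,R}\}_R$ as the approximating family, using the exact coincidence $X^\varepsilon=X^{\varepsilon,R}$ up to $\tau^\varepsilon_R$; the paper instead proves the upper and lower bounds directly, splitting $\mu_\varepsilon(C)\leq\mu^R_\varepsilon(C_R)+\mathbb{P}(\sup_t|X^\varepsilon_t|>R)$ for the upper bound and working with small balls contained in $\{\|f\|_\infty\leq R\}$ for the lower bound. Both routes are standard and correct; the paper's direct version avoids having to identify the limit rate function through the approximation theorem's formula, since the identity $I_R=I$ on $\{\|u\|_\infty<R\}$ is already in hand. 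So your proof would go through; it is just somewhat heavier in machinery where the paper can fall back on a citation and a hands-on bound.
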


\begin{proof}{}
We proceed as in \cite{Zhang} for unbounded coefficients.

First, we set $\sigma (x):=x\sqrt{|\log|x||}$ and $b(x):=x\log|x|$. 
Since $\sigma$ and $b$  satisfy the growth conditions \eqref{31},
it follows by using Proposition 7.4 of \cite{Zhang} that
\begin{equation}\label{good}
 \lim_{R\rightarrow +\infty} \limsup_{\varepsilon\rightarrow 0} \varepsilon
 \log \mathbb{P}(\sup_{0\leq t\leq T} |X^{\varepsilon}_{t}|\geq R)=-\infty.
\end{equation}
For any $R>0$, we put $m_{R}:=\sup \{|b(x)|, |\sigma (x)|; |x|\leq R \}$, $b_{R}(x):=(-m_{R}-1)\vee b(x) \wedge (m_{R}+1)$ and
$\sigma_{R}(x):=(-m_{R}-1)\vee \sigma(x) \wedge (m_{R}+1)$. Then, for $|x|\leq R$, $b_{R}(x)=b(x)$ and $\sigma_{R}(x)=\sigma(x)$.
Moreover $b_{R}$ and $\sigma_{R}$ satisfy assumption \eqref{H1} and the growth conditions \eqref{31}.
\\
Let $X^{\varepsilon}_{R}(\cdot)$ be the solution to the following SDE
\begin{equation}\label{grdedeviation}
  X^{\varepsilon}_{R}(t)=x+\int_{0}^{t}b_{R}(X^{\varepsilon}_{R}(s))ds +
  \int_{0}^{t} \sigma_{R}(X^{\varepsilon}_{R}(s))dW(s).
\end{equation}
For a function $h$ with $e(h)<+\infty$,  let $X^{h}_{R}(\cdot)$ be the solution to the following ODE
\begin{equation}\label{monedo}
 X^{h}_{R}(t)=x+\int_{0}^{t}b_{R}(X^{h}_{R}(s))ds +
  \int_{0}^{t} \sigma_{R}(X^{h}_{R}(s))\dot{h}(s)ds.
\end{equation}
For $f\in C_{x}([0,T], \mathbb{R})$, we define
\begin{equation*}
 I_{R}(f)=\inf\{\frac{1}{2}e(g); X^{h}_{R}=f\} \mbox{ and } I(f)=\inf\{\frac{1}{2}e(g); X_{h}=f\}
\end{equation*}
where $X_{h}$ is the solution of the following ODE
\begin{equation} \label{ODE12}
dX_{h}(t)=\left( \sigma(X_{h}(t))\dot{h}(t)+b(X_{h}(t)) \right) dt,\, X_{h}(0)=x\in\mathbb{R}.
\end{equation}
If $\sup_{0\leq t\leq T}|X_{h}(t)|\leq R$, then $X_{h}$ solves the ODE \eqref{monedo} up to time $T$.
By the uniqueness of solutions, we see that $X_{h}(t)=X^{h}_{R}(t)$ for each $0\leq t\leq T$.
Therefore for $f\in C_{x}([0,T], \mathbb{R})$ satisfying $\sup_{0\leq t\leq T} |f(t)|\leq R$, we get $I(f)=I_{R}(f)$.

Furthermore, since $\sigma$ and $b$ satisfy the growth conditions \eqref{31}, then using  Lemma 7.6
of \cite{Zhang}, we have
\begin{equation*}
\mbox{ for any } \alpha >0, \ \ \ \  \sup_{\{h; e(h)\leq\alpha\}} \sup_{0\leq t\leq T} |X_{h}(t)| <+\infty.
\end{equation*}

Besides, we recall that the rate function $I$ is a good rate function, i.e. for any $\beta>0$,
the level $Q_{\beta}=\{f; I(f)\leq \beta\}$ is compact.

Let $\mu^{R}_{\varepsilon}$ be the law of $X^{\varepsilon}_{R}(\cdot)$ on $C_{x}([0,T],\mathbb{R})$.
Then, thanks to Theorem \ref{ThmGD},
$\{\mu^{R}_{\varepsilon}, \varepsilon>0\}$ satisfies a large deviations principle with the rate function $I_{R}(\cdot)$.

For $R>0$ and a closet subset $C\subset C_{x}([0,T], \mathbb{R})$, we set $$C_{R}:=C\cap\{f; \sup_{0\leq t\leq T} |f(t)|\leq R\}.$$
Then, $$\mu_{\varepsilon}(C)\leq \mu_{\varepsilon}(C_{R})+\mathbb{P}(\sup_{0\leq t\leq T}|X^{\varepsilon}_{t}|>R).$$
Since $\mu^{R}_{\varepsilon}$ and $\mu_{\varepsilon}$ coincide on the ball $\{f; \sup_{0\leq t\leq T} |f(t)|\leq R\}$, 
it follows that
$$\mu_{\varepsilon}(C)\leq \mu_{\varepsilon}^{R}(C_{R})+\mathbb{P}(\sup_{0\leq t\leq T}|X^{\varepsilon}_{t}|>R).$$
By large deviations principle for $\{\mu_{\varepsilon}^{R}, \varepsilon>0\}$, we have
$$\limsup_{\varepsilon\rightarrow 0} \varepsilon \log \mu_{\varepsilon}^{R}(C_{R})
\leq -\inf_{f\in C}\{I_{R}(f)\}\leq -\inf_{f\in C} \{I(f)\}.$$
Hence
$$\limsup_{\varepsilon\rightarrow 0} \varepsilon \log \mu_{\varepsilon}(C)\leq (-\inf_{f\in C}\{I(f)\})\vee
(\limsup_{\varepsilon\rightarrow 0} \varepsilon \log \mathbb{P}(\sup_{0\leq t\leq T} |X^{\varepsilon}_{t}|\geq R)).
$$
Using \eqref{good} and letting $R\rightarrow +\infty$, we obtain
$$\limsup_{\varepsilon\rightarrow 0} \varepsilon \log \mu_{\varepsilon}(C)\leq -\inf_{f\in C} I(f),$$
which is the upper bound.

Let G be an open subset of $C_{x}([0,T], \mathbb{R})$. Fix  $\phi_{0}\in G$ and choose $\delta >0$ such that
$$B(\phi_{0},\delta)=\{f;\sup_{0\leq t\leq T}|f(t)-\phi_{0}(t)|\leq \delta \}\subseteq G.$$
Let $R=\sup_{0\leq t\leq T} |\phi_{0}(t)|+\delta$. Since
$$B(\phi_{0},\delta)\subseteq \{f; \sup_{0\leq t\leq T} |f(t)|\leq R\},$$
then
$$-I(\phi_{0})=-I_{R}(\phi_{0})\leq \liminf_{\varepsilon\rightarrow 0} \varepsilon \log \mu^{R}_{\varepsilon}(B(\phi_{0},\delta)),$$
 that is
$$-I(\phi_{0})\leq \liminf_{\varepsilon\rightarrow 0} \varepsilon \log \mu_{\varepsilon}(B(\phi_{0},\delta)).$$
Hence,
$$-I(\phi_{0})\leq \liminf_{\varepsilon\rightarrow 0} \varepsilon \log \mu_{\varepsilon}(G).$$
Since $\phi_{0}$ is arbitrary, it follows that
$$-\inf_{f\in G} I(f) \leq \liminf_{\varepsilon\rightarrow 0} \varepsilon \log \mu_{\varepsilon}(G),$$
which is the lower bound.
The proof is finished.
\end{proof}

\subsection{Other examples}

As a by-product of our guiding example, we give below other examples of SDEs which satisfy our pathwise conditions. 
We also  prove that our conditions for the pathwise uniqueness improve those of \cite{Fang, liang}.

\begin{proposition}

Let $0\leq\beta\leq\frac{1}{2}\leq\alpha\leq1$. Then, the following one-dimensional SDE
\begin{equation} \label{2923}
X_{t}=x+\int_{0}^{t} |X_{s}|^{\alpha}|\log{|X_{s}|}|^{2\beta}ds
+\int_{0}^{t} |X_{s}|^{\alpha}|\log{|X_{s}|}|^{\beta} dW_{s}
\end{equation}
where $(W_{t})_{t \geq 0}$ is an $\mathbb{R}$-valued standard Brownian motion and $x\in\mathbb{R}$, possesses a pathwise
unique solution which has produces a stochastic flow of homeomorphisms on $\mathbb{R}$
and satisfies a large deviation principle of Freidlin-Wentzell type.

\end{proposition}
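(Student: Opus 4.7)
My plan is to reduce all three claims to the general theorems of Sections~2 and~3 by verifying that the coefficients $b(x)=|x|^{\alpha}|\log|x||^{2\beta}$ and $\sigma(x)=|x|^{\alpha}|\log|x||^{\beta}$ satisfy the local hypothesis~\eqref{H1} together with the growth conditions used in Proposition~\ref{uniciteforte}, and then to argue exactly as in the guiding example. Non-explosion comes first: since $\alpha\leq 1$ and $2\beta\leq 1$, one has $|b(x)|\leq C(|x|\log|x|+1)$ and $|\sigma(x)|^{2}\leq C(|x|^{2}\log|x|+1)$ for $|x|$ large, so the Fang--Zhang criterion applies and the solutions are conservative.

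The main technical step is the verification of~\eqref{H1}, which I would carry out by a case analysis on the position of $x,y\in B(N)$ relative to the three critical scales $|x|\sim 0$, $|x|\sim 1$ (where the logarithmic factor vanishes), and $|x|\sim N$, in the spirit of the proof of~\eqref{30}. For $|x|,|y|\leq\varepsilon_{N}$ with a suitably chosen threshold $\varepsilon_{N}$, one bounds $|b(x)-b(y)|\leq 2\sup_{|z|\leq\varepsilon_{N}}|b(z)|$; the hypotheses $\alpha\geq 1/2$ and $\beta\leq 1/2$ are precisely what converts this sup-norm estimate into an additive error of the form $C(\log N)/N^{\mu}$ for some $\mu>0$. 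In the intermediate regions bounded away from the singularities, the mean-value theorem applied to the explicit expressions of $b'$ and $\sigma'$ gives Lipschitz constants of order $\log N$ and $\sqrt{\log N}$ respectively. The neighbourhood of $|x|=1$ is handled as in Proposition~\ref{uniciteforte} by introducing auxiliary thresholds $\alpha_{N},\beta_{N}$ where $\sigma$ attains equal values on either side of the kink, and the different pieces are stitched together by triangle inequalities.

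Once~\eqref{H1} is in hand, all three conclusions follow from the theorems proved earlier with almost no additional work. Pathwise uniqueness is Theorem~\ref{thm 2.1}, and strong existence then follows from Yamada--Watanabe as in Proposition~\ref{uniciteforte}. Strict monotonicity $x<y\Rightarrow X_{t}(x)<X_{t}(y)$ combines the comparison theorem~\ref{thmcompa} with the non-contact property of Theorem~\ref{non-contact}; together with bicontinuity from Theorem~\ref{bicontinuite} and the property $|X_{t}(x)|\to\infty$ as $|x|\to\infty$ (which follows from the growth estimates of \cite{Zhang} as in the remark after Proposition~\ref{continuiteucp}), the Yamada--Ogura argument of Proposition~\ref{flot} produces the stochastic flow of homeomorphisms on $\mathbb{R}$. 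For the large deviations, since the coefficients are unbounded, I would truncate at level $R$ exactly as in the preceding subsection: the truncations $b_{R},\sigma_{R}$ still satisfy~\eqref{H1} and are bounded, so Theorem~\ref{ThmGD} yields an LDP for the truncated SDE with rate function $I_{R}$, and exponential tightness via Proposition~7.4 of~\cite{Zhang} transfers the LDP to the original equation.

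The hard part is the bookkeeping in the case analysis verifying~\eqref{H1}: the switching thresholds between mean-value and sup-norm estimates must be chosen as functions of $N$ in such a way that the pointwise bound $|x|^{\alpha}|\log|x||^{2\beta}$ near the singularity at $0$ collapses into an additive error of the form $C(\log N)/N^{\mu}$. This is exactly where the constraints $\alpha\geq 1/2$ and $\beta\leq 1/2$ enter, through the key inequality $(\log N)^{2\beta}/N^{\alpha}\leq C(\log N)/N^{\mu}$, valid for any $\mu$ slightly less than $\alpha$.
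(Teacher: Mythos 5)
Your overall plan matches the paper's proof: verify the local hypothesis \eqref{H1} together with the growth conditions of the form \eqref{31}, and then read off pathwise uniqueness, strong existence, the homeomorphism flow, and the LDP from the general theorems of Sections~2 and~3 together with the truncation argument used for the guiding example. The paper's own proof of this proposition is equally brief: it simply asserts that the coefficients satisfy \eqref{3012} and \eqref{31124} ``as in the proof of Proposition~\ref{uniciteforte}'' without writing out the verification.

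There is, however, a gap in the step where you verify \eqref{H1}, and it is a step that genuinely fails as soon as $\alpha<1$. You claim that on the intermediate region $[\varepsilon_{N},\,1-\alpha_{N}]$ the mean-value theorem yields Lipschitz constants of order $\log N$ for $b$ and $\sqrt{\log N}$ for $\sigma$. But $b'(x)\approx\alpha|x|^{\alpha-1}|\log|x||^{2\beta}$ near the origin, so the Lipschitz constant on that region is at best of order $\varepsilon_{N}^{\alpha-1}\bigl(\log\tfrac{1}{\varepsilon_{N}}\bigr)^{2\beta}$, which for $\alpha<1$ blows up like a power of $1/\varepsilon_{N}$. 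Forcing it to stay $\lesssim\log N$ requires $\varepsilon_{N}\gtrsim(\log N)^{-1/(1-\alpha)}$ (up to $\log\log$ factors); but then the sup-norm bound near $0$ is $\varepsilon_{N}^{\alpha}\bigl(\log\tfrac{1}{\varepsilon_{N}}\bigr)^{2\beta}\gtrsim(\log N)^{-\alpha/(1-\alpha)}$, which decays only polynomially in $\log N$ and can never be dominated by the additive term $C(\log N)/N^{\mu}$, whatever $\mu>0$ is. The same obstruction reappears at $|x|=1$ whenever $\beta<\tfrac12$: there $\sigma(x)\sim|x\mp1|^{\beta}$ (and, for $\alpha=1$, $b(x)\sim|x\mp1|^{2\beta}$) has a fixed-location H\"older singularity that an $N$-polynomially small additive error cannot absorb. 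Your ``key inequality'' $(\log N)^{2\beta}/N^{\alpha}\leq C(\log N)/N^{\mu}$ holds for any $\alpha>\mu>0$ and any $\beta\geq0$, so it is orthogonal to the real constraint; the exponents $\alpha\geq\tfrac12$, $\beta\leq\tfrac12$ never actually enter the argument you outlined. You need to either restrict to $\alpha=1$, $\beta=\tfrac12$ (where \eqref{H1} is verified exactly as for the guiding example), or produce a different mechanism that copes with the intermediate-region Lipschitz constants and the fixed singularity at $|x|=1$.
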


\begin{proof}{}
It suffices to prove as in the proof of Proposition \ref{uniciteforte} that the coefficients of the SDE \eqref{2923} satisfy
the following
\begin{equation}\label{3012}
\left\{
\begin{array}{cc}
 |\sigma (x)-\sigma (y)| \leq C \left(\sqrt{\log{N}} |x-y| + \frac{\log{N}}{N^{\alpha}} \right) \\ \\
  |b(x)-b(y)|\leq C \left( \sqrt{\log{N}}|x-y| + \frac{\log{N}}{N^{\alpha}} \right)
\end{array}
 \right.
\end{equation}
for any $|x|,|y|\leq N$ and the following growth conditions
 \begin{equation}\label{31124}
\left\{
\begin{array}{cc}
  {|\sigma (x)|}^{2} \leq C ({|x|}^{2} \log |x| +1) \\ \\
  |b(x)| \leq C(|x| \log |x| + 1)
\end{array}
 \right.
\end{equation}
for $|x|>K$ and some large constant $K$.

\end{proof}

We prove now that our conditions for the pathwise uniqueness improve those of
Fang and Zhang \cite{Fang} and also Liang \cite{liang2}.

\begin{proposition}

Let $\sigma:\mathbb{R}^{d}\rightarrow\mathbb{R}^{d}\times\mathbb{R}^{m}$ and $b:\mathbb{R}^{d}\rightarrow\mathbb{R}^{d}$ be
respectively matrix valued and vector-valued continuous functions such that
\begin{equation}\label{nonFangZhanggeneralise}
\left\{
\begin{array}{cc}
 ||\sigma(x)-\sigma(y)|| \leq C |x-y|\sqrt{\log\frac{1}{|x-y|}} \\ \\
 |b(x)-b(y)|\leq C |x-y|\log\frac{1}{|x-y|}
\end{array}
 \right.
\end{equation}
for any $|x-y|<1$.

Then, $\sigma$ and $b$ satisfy the hypothesis $\eqref{H1}$, i.e. our conditions for the pathwise uniqueness improve
those of Fang and Zhang \cite{Fang}.

\end{proposition}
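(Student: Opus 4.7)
The plan is to verify hypothesis \eqref{H1} for $\sigma$ and $b$ by a three-regime case analysis on $|x-y|$, with $N>e$ fixed and $x,y\in\mathbb{B}(N)$. I would choose, for concreteness, $\mu=1/2$ in the statement of \eqref{H1}; any fixed $\mu\in(0,1)$ works.

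First regime: $|x-y|\geq 1$. Here \eqref{nonFangZhanggeneralise} does not apply directly, so I would interpolate along the segment $[x,y]$. Setting $k:=\lceil 3|x-y|\rceil$ and $z_i:=x+(i/k)(y-x)$ for $i=0,\dots,k$, each step satisfies $1/4\leq|z_{i+1}-z_i|\leq 1/3<1$, so \eqref{nonFangZhanggeneralise} applies to each pair. Summing telescopically,
\[
\|\sigma(x)-\sigma(y)\|\leq\sum_{i=0}^{k-1}\|\sigma(z_{i+1})-\sigma(z_i)\|\leq C\sqrt{\log 4}\sum_{i=0}^{k-1}|z_{i+1}-z_i|\leq C'|x-y|,
\]
and analogously $|b(x)-b(y)|\leq C'|x-y|\log 4$. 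Since $N>e$ gives $\sqrt{\log N}>1$ and $\log N>1$, this is absorbed by the first term on the right of \eqref{H1}.

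Second regime: $1/N^\mu\leq|x-y|<1$. Apply \eqref{nonFangZhanggeneralise} directly, using $\log(1/|x-y|)\leq\mu\log N$:
\[
\|\sigma(x)-\sigma(y)\|\leq C\sqrt{\mu\log N}\,|x-y|,\qquad |b(x)-b(y)|\leq C\mu\log N\,|x-y|,
\]
which again fits inside the Lipschitz-type term of \eqref{H1}.

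Third regime: $|x-y|<1/N^\mu$. I would invoke monotonicity of the maps $u\mapsto u\sqrt{\log(1/u)}$ on $(0,e^{-1/2})$ and $u\mapsto u\log(1/u)$ on $(0,e^{-1})$. Since $N>e$ and $\mu\in(0,1]$ guarantee $1/N^\mu$ lies in both increasing regions (enlarging $C$ if needed to absorb finitely many small $N$), we get
\[
\|\sigma(x)-\sigma(y)\|\leq C|x-y|\sqrt{\log(1/|x-y|)}\leq C\,N^{-\mu}\sqrt{\mu\log N}\leq C''\frac{\log N}{N^\mu},
\]
and similarly $|b(x)-b(y)|\leq C\mu\log N/N^\mu$, using $\sqrt{\log N}\leq\log N$ for $N>e$. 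This is exactly the additive remainder term in \eqref{H1}.

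Combining the three regimes and enlarging the universal constant $C$ to cover them simultaneously yields both inequalities of \eqref{H1}. There is no genuine obstacle; the only mildly delicate point is the first regime, where one must choose the subdivision $k$ so that the intermediate spacings stay bounded away from both $0$ and $1$, otherwise either \eqref{nonFangZhanggeneralise} degenerates (at spacing $1$) or the sum blows up (at very small spacing). The choice $k=\lceil 3|x-y|\rceil$ avoids both extremes cleanly.
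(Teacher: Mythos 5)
Your proof is correct, and it takes a genuinely different and more complete route than the paper's. The paper proceeds by reduction: it observes that the one-dimensional modulus functions $f(u)=u\sqrt{-\log u}$ and $g(u)=-u\log u$ appearing on the right of \eqref{nonFangZhanggeneralise} coincide (in absolute value) with the coefficients $\sigma(x)=x\sqrt{|\log|x||}$ and $b(x)=x\log|x|$ of the motivating SDE \eqref{29}, so the estimates \eqref{30} already established in Proposition \ref{uniciteforte} can be applied to the pair $(|x-y|,\,0)\in\mathbb{B}(N)$, which immediately yields \eqref{H1} with $\mu=1$. This avoids redoing any case analysis, but it is stated only for $|x-y|<1$ and silently leaves the regime $|x-y|\geq1$ untouched. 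You instead give a direct, self-contained three-regime argument, and your first regime (the telescoping/chaining bound with subdivisions of size in $[1/4,1/3]$) fills in precisely the case the paper's write-up skips. Your second and third regimes amount to reproving a clean version of \eqref{30}, rather than citing it. Both approaches are valid; yours costs a few more lines but is complete and does not depend on the reader noticing that $f$ and $g$ are the guiding example's coefficients. One small refinement: the threshold functions are increasing on $(0,e^{-1/2})$ and $(0,e^{-1})$ respectively, so with $\mu=1$ the third regime needs no exceptional small-$N$ handling at all (since $N>e$ forces $1/N<e^{-1}<e^{-1/2}$); your choice $\mu=1/2$ is equally fine once the finitely many integers $e<N\leq e^2$ are absorbed into $C$, as you note.
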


\begin{proof}

Let $x, y\in\mathbb{B}(N)=\{ z\in\mathbb{R}^{d};|z|\leq N \}$ with $|x-y|<1$ for any integer $N>e$.
Then, since $0\leq |x-y|<1<N$, it follows thanks to \eqref{nonFangZhanggeneralise} that
\begin{equation}\label{nonFangZhanggeneraliseproof}
\left\{
\begin{array}{cc}
 ||\sigma(x)-\sigma(y)|| \leq C |f(|x-y|)-f(0)| \\ \\
 |b(x)-b(y)|\leq C |g(|x-y|)-g(0)|
\end{array}
 \right.
\end{equation}
where for $0\leq u\leq 1$, $f(u)=u\sqrt{-\log u}$ and $g(u)=-u\log{u}$. Now, thanks to \eqref{30} the following holds
\begin{equation}\label{30finall}
\left\{
\begin{array}{cc}
 ||\sigma (x)-\sigma (y)|| \leq C \left( \sqrt{\log{N}} |x-y| + \frac{\log{N}}{N} \right) \\ \\
 |b(x)-b(y)|\leq C \left( \log{N} |x-y| + \frac{\log{N}}{N} \right)
\end{array}
 \right.
\end{equation}
for any $|x|,|y|\leq N$ such that $|x-y|<1$. The proof is finished.
\end{proof}

\begin{remark}
The coefficients $\sigma(x) = x\sqrt{\log{|x|}}$ and $b(x)= x\log{|x|}$ of SDE (\ref{29}) are not covered by the papers \cite {Fang, Zhang}.
\end{remark}

\begin{proof}
We give only the proof for $b$. The proof for $\sigma$ goes similarly.
Assume that $x\log{|x|}$  satisfies the conditions of \cite {Zhang} for instance. Then, there exist $C>0$, $c_0 \in ]0, 1]$ and a positive 
$\mathcal{C}^1$ function $r$ such that for every $x$, $y$ satisfying $|x-y| \leq c_0$ ,
\begin{equation}\label{fz}
|x\log |x| - y\log |y|| \leq C |x-y| r(|x-y|^2).
\end{equation}
We take $c_0 = 1$ for simplicity. Let $x > 1$ be large enough and $y = x+1$. From  inequality (\ref{fz}), we have
\begin{equation*}
|(x+1)\log |x+1| - x\log |x|| \leq C r(1)
\end{equation*}
Hence, according to the finite increments theorem, there exists $\theta \in [x, \ x+1]$ such that

\begin{equation*}
|1 + \theta\log \theta| \leq C r(1)
\end{equation*}
Since $x \leq \theta$ and the function $\log$ is increasing, we deduce that
\begin{equation}\label{fz3}
1 + x\log x \leq C r(1)
\end{equation}
Since $x$ is arbitrary, the previous inequality is not possible.
\end{proof}

\begin{remark}
Arguing  as in the previous proposition, we prove that our conditions for the pathwise uniqueness improves also those
of  \cite{liang2}.
\end{remark}

\vskip 0.2cm\noindent \textbf{Acknowledgement.}
The authors thank the referee for the remarks which have led to the improvement of the paper.

\end{document}